%% -*- coding: utf-8 -*-
%% Time-stamp: <2018-03-05 18:18:33>
\documentclass[12pt, leqno]{article}

\title{Voros Coefficients for the Hypergeometric Differential Equations 
and Eynard-Orantin's Topological Recursion \\
---{\Large Part II : For the Confluent Family of Hypergeometric Equations ---}
}
\author{
Kohei \textsc{Iwaki}$^{\dagger}$\and
Tatsuya \textsc{Koike}$^{\sharp}$\and
Yumiko \textsc{Takei}$^{\heartsuit}$}
\def\paperinfo{
\renewcommand{\thefootnote}{\fnsymbol{footnote}}
\footnote[0]{$^{\dagger}$Graduate School of Mathematics, 
Nagoya University. 
% {\tt{iwaki@math.nagoya-u.ac.jp}}
}
\footnote[0]{$^{\sharp}$Department of Mathematics, 
Graduate School of Science, Kobe University.}
\footnote[0]{$^{\heartsuit}$Department of Mathematics, 
Graduate School of Science, Kobe University. 
% {\tt{ytakei@math.kobe-u.ac.jp}}
}
\footnote[0]{2010 \textit{Mathematics Subject Classification}. 
Primary:34M60; Secondary:81T45}
\footnote[0]{\textit{Keywords}: Exact WKB analysis; Voros coefficients;
Topological recursion; Quantum curves; Free energy; Gauss hypergeometric equaiton.}
% \footnote[0]{
% This work is supported, in part, by JSPS KAKENHI Grand Numbers 
% 16K17613, 16H06337, 16K05177, 17H06127.}
\renewcommand*{\thefootnote}{\arabic{footnote}}
}
\date{\today}

\usepackage{afterpage}
\usepackage{amsmath}
\usepackage{amsthm}
\usepackage{amssymb}        % for \square
\usepackage{amsfonts}       % for Black Board Fonts
\usepackage[dvipdfmx]{color}
\usepackage[dvipdfmx]{graphicx}
\usepackage{enumitem}
\usepackage[mathscr]{eucal} % \mathcal{A} \mathscr{A}
\usepackage[a4paper]{geometry}
\usepackage{lipsum}
\usepackage{lscape}
\usepackage{mathrsfs}
\usepackage{stmaryrd}       %longmapsfrom 2008/06/13
\usepackage[all]{xy}

% Page Layout
\geometry{tmargin=3.2cm,bmargin=3.1cm,lmargin=2cm,rmargin=2cm}

%%% Theorems
\theoremstyle{plain}
\newtheorem{thm}{Theorem}[section]
\newtheorem{prop}[thm]{Proposition}
\newtheorem{lem}[thm]{Lemma}

\theoremstyle{definition}
\newtheorem{dfn}[thm]{Definition}
\theoremstyle{remark}
\newtheorem{rem}[thm]{Remark}

% counters
\numberwithin{equation}{section}
\numberwithin{table}{section}
\numberwithin{figure}{section}

% reference

% Macros
\def\Res{\mathop{\rm{Res}}}

\def\ord{\mathop{\rm{ord}}\nolimits}

\def\Sing{\mathop{\rm{Sing}}\nolimits}

\begin{document}

\maketitle

\paperinfo

\begin{abstract}
We show that the each member of the confluent family of 
the Gauss hypergeometric equations is realized as 
quantum curves for appropriate spectral curves.
As an application, relations between the Voros coefficients
of those equations and the free energy of their classical limit
computed by the topological recursion are established. 
We will also find explicit expressions of the free energy and the Voros coefficients 
in terms of the Bernoulli numbers and Bernoulli polynomials. 

\end{abstract}

\tableofcontents

%%%%%%%%%%%%%%%%%%%%%%%%%%%%%%%%%%%%%%%%%%%%%%%%%%%%%%%%%%%%%%%%%%%%%%%
%%%%%%%%%%%%%%%%%%%%%%%%%%%%%%%%%%%%%%%%%%%%%%%%%%%%%%%%%%%%%%%%%%%%%%%
%%%%%%%%%%%%%%%%%%%%%%%%%%%%%%%%%%%%%%%%%%%%%%%%%%%%%%%%%%%%%%%%%%%%%%%

\section{Introduction} 
\label{section:intro}

This is the second part of \cite{IKT-part1} 
on a relation between 
the topological recursion and the exact WKB analysis.
Let us describe our motivation again 
although it was explained in the previous article.

The theory of quantum curves, developed by many authors including 
\cite{GS, DM, BE}, is a bridge between the topological recursion 
(or matrix models) and the WKB analysis. 
It claims that, under a certain admissibility assumption (\cite{BE}), 
a generating function 
\begin{equation} \label{eq:wave-z-intro}
\psi(z,\hbar) = \exp \left( \sum_{\substack{g \ge 0 \\ n \ge 1}} 
\frac{\hbar^{2g-2+n}}{n!} \int^z \cdots \int^z 
\left( W_{g,n}(z_1,\dots, z_n) - \delta_{g,0}\delta_{n,2} 
\frac{dx(z_1)dx(z_2)}{(x(z_1) - x(z_2))^2} \right)
\right)
\end{equation}
of Eynard-Orantin's correlation functions 
$\{ W_{g,n}(z_1,\dots,z_n) \}_{g \ge 0, n \ge 1}$ defined by 
the topological recursion (\cite{EO}) 
gives the WKB solution of a Schr{\"o}dinger-type linear differential 
(or difference) equation whose classical limit 
coincides with the initial spectral curve for the topological recursion. 
It is intriguing because the framework of the quantum curve also suggests
that the expansion coefficients of WKB solution 
may contain an information of various enumerative invariants 
which the topological recursion computes 
(see \cite{Eyanrd-11, EO2, EO-08, Zhou12} etc).

The purpose of this paper is to add a new perspective to the 
theory of quantum curves from the view point of the exact WKB analysis
(cf.\,\cite{Voros83, DDP93, KT98}). 
In particular, as a continuation of the authors' work \cite{IKT-part1}, 
we investigate a relationship between 
the following two objects:
\begin{itemize}
\item
The free energy 
\begin{equation}
F = \sum_{g=0}^{\infty} \hbar^{2g-2}F_g
\end{equation}
of spectral curves. Here $F_g$'s are computed by the topological recursion
(see \S \ref{section:free-energy-part-2}).
\item
The Voros coefficient 
\begin{equation}
V = \int_{\gamma} \left(S(x,\hbar) - \hbar^{-1} S_{-1}(x) - S_0(x) \right) \, dx
= \sum_{m=1}^{\infty} \hbar^{m} \int_\gamma S_m(x) \, dx
\end{equation}
in the theory of the exact WKB analysis. 
Here $S(x,\hbar) = \sum_{m\ge -1} \hbar^{m} S_m(x)$ 
is the logarithmic derivative of the WKB solution, 
and $\gamma$ is a certain integration contour
(see \S \ref{section:def-Voros}). 
\end{itemize}
The free energy 
is one of the most important quantity in 
the topological recursion because it is closely related to
the partition function of matrix models.
On the other hand, after the work of \cite{Voros83} and \cite{DDP93}, Voros coefficients 
have been recognized as one of the most important objects in the exact WKB analysis. 
For example, it is known that the global behavior (monodoromy or connection matrices) of solutions 
of Schr{\"o}dinger-type equations is described by 
Borel resummed Voros coefficients (\cite[\S 3]{KT98}). 
Voros coefficients also play an essential role in the analysis of parametric Stokes phenomenon 
(cf.\,\cite{AKT09, KoT11, AIT, I14} etc.) 
which is closely related to a cluster algebraic structure 
in the exact WKB analysis (\cite{IN14}).

In previous works for quantum curves, the roles of the free energy 
and Voros coefficients are not clear even though they are crucially important 
in the topological recursion or the exact WKB analysis. 
In our first paper \cite{IKT-part1} we started our study to answer the following question:
\begin{quote}
What quantity corresponds to the Voros coefficient in the topological recursion ?
\end{quote}
One of the main result of \cite{IKT-part1} is to establish a relation between 
the free energy and the Voros coefficient for a specific spectral curve
$y^2 = (x^2/4) - \lambda_\infty$, called the Weber curve. 
In this case, the associated quantum curve becomes 
the Weber equation (harmonic oscillator), and we find that the 
Voros coefficient for the Weber equation is computed as the difference value 
of the free energy. Since we know an explicit expression of the Voros coefficient 
for the Weber equation thanks to the work \cite{Takei08}, 
we could also obtain an explicit formula for the free energy. 
In particular, our computation recovered the well-known formula 
for the free energy of the Weber curve in terms of the Bernoulli numbers
given by \cite{HZ, Penner}. 
As our main result of the second part, we will generalize the result 
for the Weber curve to the spectral curves arising from the confluent family 
of the Gauss hypergeometric differential equations with an appropriately introduced 
small parameter $\hbar$. 
Such equations have already been considered from 
the view point of the exact WKB analysis by many authors 
(Gauss:\,\cite{Aoki-Tanda}, 
Kummer:\,\cite{Takahashi, ATT}, 
Legendre:\,\,\cite{Ko3, KKKT11}, 
Whittaker:\,\cite{KoT11, KKKT11}, 
Bessel:\,\cite{AIT},
Weber:\,\cite{Takei08, AKT09} etc.).

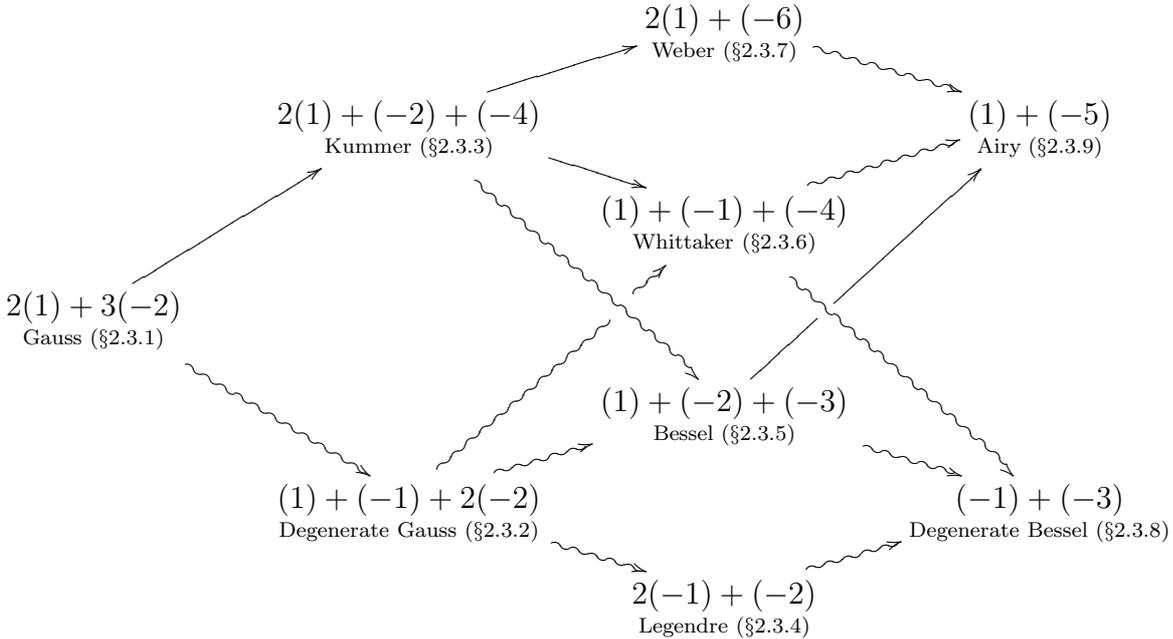
\begin{figure}[t]
$$
\xymatrix@!C=35pt@R=7pt{
&&
&& \underset{\text{Weber (\S\ref{subsection:quantum-Weber})}}{2(1) + (-6)} \ar@{~>}[rrd]
&&
\\
&& \underset{\text{Kummer (\S\ref{subsection:quantum-Kummer})}}
{2(1) + (-2) + (-4)} \ar@{->}[rru] \ar@{->}[rrd] \ar@{~>}[rrddd]
&&
&& \underset{\text{Airy (\S\ref{subsection:quantum-Airy})}}{(1) + (-5)}
\\
&&
&& \underset{\text{Whittaker (\S\ref{subsection:quantum-Whittaker})}}
{(1)+(-1)+(-4)} \ar@{~>}[rru] \ar@{~>}[rrddd]
&&
&&
\\
\underset{\text{Gauss (\S\ref{subsection:quantum-Gauss})}}
{2 (1) + 3(-2)} \ar@{->}[rruu] \ar@{~>}[rrdd]
&&
&&
&&
&&
\\
&&
&& \underset{\text{Bessel (\S\ref{subsection:quantum-Bessel})}}{(1)+(-2)+(-3)} 
\ar@{->}[rruuu]|(0.32)\hole \ar@{~>}[rrd]
&&
&&
\\
&& \underset{\text{Degenerate Gauss (\S\ref{subsection:quantum-d-Gauss})}}
{(1)+(-1)+2(-2)} 
\ar@{~>}[rru] \ar@{~>}[rrd] \ar@{~>}[rruuu]|(0.67)\hole
&&
&& \underset{\text{Degenerate Bessel (\S\ref{subsection:quantum-d-Bessel})}}{(-1) + (-3)}
\\
&&
&& \underset{\text{Legendre (\S\ref{subsection:quantum-Legendre})}}{2(-1) + (-2)}\ar@{~>}[rru]
&&
\\
}
$$
\caption{The confluence diagram of spectral curves 
obtained as classical limit of the confluent family of the
Gauss hypergeometric equations. 
A straight line (resp., a wiggly line) in the figure
denotes the confluence of singular points
(resp., the coalescence of a turning point and a singular point).
Here a simple pole of the potential is also regarded 
as a turning point (cf. \cite{Ko1, Ko2}). 
Although the degenerate Bessel equation
in \S \ref{subsection:quantum-d-Bessel} is a special 
(i.e. $\lambda_0=0$) case of the Bessel equation in 
\S \ref{subsection:quantum-Bessel}, 
we distinguish these two because the geometry of 
the spectral curves are different.}
\label{fig:confluence-WKB}
\end{figure}

Before stating our results, let us show the confluence diagram, 
shown in Fig.\,\ref{fig:confluence-WKB}, 
of spectral curves considered in this paper. 
The spectral curves are classical limit of the
confluent family of the Gauss hypergeometric equations 
with a small parameter $\hbar$.
It takes the form $y^2 - Q_{\rm cl}(x) = 0$
and concrete expressions of the rational function $Q_{\text{cl}}(x)$ 
are given in Table \ref{table:classical}. 
The numbers in the diagram in Fig.\,\ref{fig:confluence-WKB} 
are related to orders of zeros and poles of 
the meromorphic quadratic differential $Q_{\text{cl}}(x)(dx)^2$. 
More precisely, in the diagram in Fig.\,\ref{fig:confluence-WKB} 
the number in a parenthesis denotes an order of a pole
of $Q_{\text{cl}}(x)(dx)^2$,
and the number before parenthesis denotes 
a number of poles of this order. 
For example, a symbol $2 (1) + (-2) + (-4)$ means that 
$Q_{\text{cl}}(x)(dx)^2$ has two simple zeros, 
one pole of order two, and one pole of order four. 
Zeros of $Q_{\text{cl}}(x)(dx)^2$ are nothing but turning points in the WKB analysis.

\begin{table}[t]
\begin{center}
\begin{tabular}{ccc}\hline
 & $Q_{\text{cl}}(x)$  & 
\\\hline\hline
%%%%%%%%%%%%%%%%%%%%%%%%%%%%%%%%%%%%%%%%
\parbox[c][4.5em][c]{0em}{}
Gauss (\S\ref{subsection:quantum-Gauss})
&
\begin{minipage}{.35\textwidth}
\begin{center}
$\dfrac{{\lambda_\infty}^2 x^2 
- ({\lambda_\infty}^2 + {\lambda_0}^2 - {\lambda_1}^2)x 
+ {\lambda_0}^2}{x^2 (x-1)^2}$
\end{center}
\end{minipage}
&
\begin{minipage}{.35\textwidth}
\begin{center}
$\lambda_0, \lambda_1, \lambda_\infty \neq 0$,\\
$\lambda_\infty \neq \lambda_0 \pm \lambda_1$,\\
$\lambda_\infty \neq - (\lambda_0 \pm \lambda_1)$.
\end{center}
\end{minipage}
\\\hline
%%%%%%%%%%%%%%%%%%%%%%%%%%%%%%%%%%%%%%%%
\parbox[c][3.5em][c]{0em}{}
Degenerate Gauss
(\S\ref{subsection:quantum-d-Gauss})
&
\begin{minipage}{.35\textwidth}
\begin{center}
$\dfrac{{\lambda_\infty}^2 x + {\lambda_1}^2 - {\lambda_\infty}^2}{x(x-1)^2}$
\end{center}
\end{minipage}
&
\begin{minipage}{.35\textwidth}
\begin{center}
$\lambda_1, \lambda_\infty \neq 0$,\\
$\lambda_\infty \neq \pm \lambda_1$.
\end{center}
\end{minipage}
\\\hline
%%%%%%%%%%%%%%%%%%%%%%%%%%%%%%%%%%%%%%%%
\parbox[c][3.5em][c]{0em}{}
Kummer (\S\ref{subsection:quantum-Kummer})
&
\begin{minipage}{.35\textwidth}
\begin{center}
$\dfrac{x^2 + 4 \lambda_\infty x + 4 {\lambda_0}^2}{4x^2}$
\end{center}
\end{minipage}
&
\begin{minipage}{.35\textwidth}
\begin{center}
$\lambda_0\neq 0$,\\
$\lambda_\infty \neq \pm \lambda_0$.
\end{center}
\end{minipage}
\\\hline
%%%%%%%%%%%%%%%%%%%%%%%%%%%%%%%%%%%%%%%%
\parbox[c][3.5em][c]{0em}{}
Legendre (\S\ref{subsection:quantum-Legendre})
& $\dfrac{\lambda_\infty^2}{x^2-1}$
&
$\lambda_\infty \neq 0$.
\\\hline
%%%%%%%%%%%%%%%%%%%%%%%%%%%%%%%%%%%%%%%%
\parbox[c][3.5em][c]{0em}{}
Bessel (\S\ref{subsection:quantum-Bessel})
& $\dfrac{x + \lambda_0^2}{4x^2}$
&
$\lambda_0 \neq 0$.
\\\hline
%%%%%%%%%%%%%%%%%%%%%%%%%%%%%%%%%%%%%%%%
\parbox[c][3.5em][c]{0em}{}
Whittaker (\S\ref{subsection:quantum-Whittaker})
& $\dfrac{x - 4\lambda_\infty}{4 x}$
& $\lambda_\infty \neq 0$.
\\\hline
%%%%%%%%%%%%%%%%%%%%%%%%%%%%%%%%%%%%%%%%
\parbox[c][3.5em][c]{0em}{}
Weber (\S\ref{subsection:quantum-Weber})
& $\dfrac{1}{4} x^2 - \lambda_\infty$
& $\lambda_\infty \neq 0$.
\\\hline
%%%%%%%%%%%%%%%%%%%%%%%%%%%%%%%%%%%%%%%%
\parbox[c][3.5em][c]{0em}{}
Degenerate Bessel (\S \ref{subsection:quantum-d-Bessel})
& $\dfrac{1}{x}$
& 
\\\hline
%%%%%%%%%%%%%%%%%%%%%%%%%%%%%%%%%%%%%%%%
\parbox[c][3.5em][c]{0em}{}
Airy (\S \ref{subsection:quantum-Airy})
& $x$
& 
\\\hline
%%%%%%%%%%%%%%%%%%%%%%%%%%%%%%%%%%%%%%%%
\end{tabular}
\end{center}
\caption{Classical limit 
$y^2 - Q_{\text{cl}}(x) = 0$ 
of the quantum curves 
discussed in \S \ref{section:example-quantum-curves}.}
\label{table:classical}
\end{table}

Note that $Q_{\rm cl}(x)$ contains a tuple of parameters $\lambda_j$ 
which determines the formal monodoromy (or the characteristic exponents) 
around singular points. We impose the conditions shown in Table \ref{table:classical} 
on these parameters so that the topological recursion is applicable when we
regard these curves as spectral curves.
Then, the other result in \cite{IKT-part1} 
(cf. \cite[Theorem 3.5]{IKT-part1}, or Theorem \ref{thm:WKB-Wg,n}) shows that, 
after a coordinate change $z \mapsto x(z)$, 
the formula \eqref{eq:wave-z-intro} gives a WKB solution of 
a second order linear ODE (i.e., the quantum curve associated with 
the spectral curve $y^2 - Q_{\rm cl}(x) = 0$) 
which is equivalent to a corresponding member 
in the confluent family of the Gauss hypergeometric equations. 
Therefore, the diagram in Fig.\,\ref{fig:confluence-WKB} can be 
identified with the traditional confluence diagram for hypergeometric equations.
Concrete forms of the quantum curves are given in \S \ref{section:example-quantum-curves}.
Note that, although the Gauss curve in Table \ref{table:classical} 
is not admissible in the sense of \cite{BE}, the previous result
\cite[Theorem 3.5]{IKT-part1} is applicable to this example 
and we may check that the resulting quantum curve is equivalent to
the Gauss hypergeometric equation. See \S \ref{subsection:quantum-Gauss}.

Let us formulate our result for the Gauss curve 
\begin{equation} \label{eq:Gauss-curve-intro}
y^2 - \frac{ {\lambda_{\infty}}^2 x^2 - 
({\lambda_{\infty}}^2 + {\lambda_0}^2 - {\lambda_1}^2)x + {\lambda_0}^2 }{x^2 (1 - x)^2} = 0
\end{equation} 
and the associated quantum curve (quantum Gauss curve) given in \eqref{eq:Gauss_eq(d/dx)}. 
We also set the parameters $\nu_{j\pm} = 1/6$ ($j \in \{0,1,\infty \}$) 
in the quantum Gauss curve \eqref{eq:Gauss_eq(d/dx)} for simplicity.
Since the quantum Gauss curve has three singular points at $x=0,1,\infty$ on ${\mathbb P}^1$, 
three Voros coefficients $V^{(j)}(\underline{\lambda}; \hbar)$
($\underline{\lambda} = (\lambda_0,\lambda_1,\lambda_\infty)$) 
are associated depending on the choice of singular point 
$j \in \{0,1, \infty\}$ (see \S \ref{subsec:main-theorem} for precise definition). 
Having this in mind, we find the following relation between 
the Voros coefficient and the free energy for the Gauss curve: 

\begin{thm}[Theorem \ref{thm:main-theorem-in-part2} (i)]
The Voros coefficient $V^{(j)}(\underline{\lambda}; \hbar)$ 
for the singular point $j \in \{0,1,\infty \}$ of the quantum Gauss curve \eqref{eq:Gauss_eq(d/dx)} 
and the free energy $F(\underline{\lambda})$ of the Gauss curve 
\eqref{eq:Gauss-curve-intro} are related as follows:
\begin{align}
& V^{(0)}(\underline{\lambda}; \hbar)
= F\left(\lambda_0+\frac{\hbar}{2}, \lambda_1, \lambda_{\infty}; \hbar\right)
- F\left(\lambda_0-\frac{\hbar}{2}, \lambda_1, \lambda_{\infty}; \hbar\right)
-\frac{1}{\hbar}\frac{\partial F_0}{\partial\lambda_0},
\\
%%%
& V^{(1)}(\underline{\lambda}; \hbar)
= F\left(\lambda_0, \lambda_1+\frac{\hbar}{2}, \lambda_{\infty}; \hbar\right)
- F\left(\lambda_0, \lambda_1-\frac{\hbar}{2}, \lambda_{\infty}; \hbar\right)
- \frac{1}{\hbar}\frac{\partial F_0}{\partial\lambda_1},
\\
%%%
& V^{(\infty)}(\underline{\lambda}; \hbar)
= F\left(\lambda_0, \lambda_1, \lambda_{\infty}+\frac{\hbar}{2}; \hbar\right)
- F\left(\lambda_0, \lambda_1, \lambda_{\infty}-\frac{\hbar}{2}; \hbar\right)
- \frac{1}{\hbar}\frac{\partial F_0}{\partial\lambda_\infty}.
\end{align}
\end{thm}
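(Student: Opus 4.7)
The plan is to reduce each of the three identities to a term-by-term comparison of $\hbar$-expansions and then match the coefficients via the variational formula of the topological recursion, following the template for the Weber case in \cite{IKT-part1}. As a first step I would expand the right-hand side. Writing $F=\sum_{g\ge 0}\hbar^{2g-2}F_g$ and Taylor-expanding in $\lambda_j$, only odd derivatives survive and after subtracting the $(g,k)=(0,0)$ term one obtains
\begin{equation*}
F\Bigl(\lambda_j+\tfrac{\hbar}{2}\Bigr)-F\Bigl(\lambda_j-\tfrac{\hbar}{2}\Bigr)-\frac{1}{\hbar}\frac{\partial F_0}{\partial\lambda_j}
= \sum_{\substack{g,k\ge 0\\(g,k)\ne(0,0)}} \frac{\hbar^{2g+2k-1}}{2^{2k}(2k+1)!}\,\frac{\partial^{2k+1}F_g}{\partial\lambda_j^{2k+1}},
\end{equation*}
which contains only odd powers of $\hbar$, as expected for Voros coefficients.

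Next I would rewrite the left-hand side through the $W_{g,n}$. By \thmref{WKB-Wg,n} applied to the quantum Gauss curve, $S_m(x)\,dx$ is (up to the subtractions for $m=-1,0$) a sum of iterated primitives of $W_{g,n}$ with $2g-2+n=m$, so performing a last integration along a cycle $\gamma^{(j)}$ on the spectral curve around a preimage of $x=j$ turns the Voros coefficient into
\begin{equation*}
V^{(j)}(\underline{\lambda};\hbar) = \sum_{\substack{g\ge 0,\,n\ge 1\\ 2g-2+n\ge 1}} \frac{\hbar^{2g-2+n}}{n!}\int_{\gamma^{(j)}}\!\!\cdots\!\!\int_{\gamma^{(j)}} W_{g,n}(z_1,\dots,z_n),
\end{equation*}
where the $(g,n)=(0,1),(0,2)$ terms are absent because they correspond to $\hbar^{-1}S_{-1}$ and $S_0$ removed in the definition of $V$, and the parity under the hyperelliptic involution kills the contributions with $n$ even, leaving only $n=2k+1$.

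The crucial input is then the variational formula. Since $\lambda_j$ is (up to a fixed normalization) the residue of $y\,dx$ at the singular point $j$, the cycle $\gamma^{(j)}$ represents the deformation $\partial/\partial\lambda_j$, yielding
\begin{equation*}
\frac{\partial W_{g,n}}{\partial\lambda_j} = \int_{\gamma^{(j)}} W_{g,n+1},
\end{equation*}
so that $\partial_{\lambda_j}^{2k+1}F_g$ equals the $(2k+1)$-fold cycle integral of $W_{g,2k+1}$. Matching this with the $n=2k+1$ term in the expansion of $V^{(j)}$ and accounting for all normalization constants (in particular the factor $2^{-2k}$ that arises from relating the Voros cycle to the variational cycle on the double cover) gives the theorem term by term. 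The same argument applies for each $j\in\{0,1,\infty\}$ by selecting the cycle around the corresponding puncture.

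The main obstacle is to establish the variational formula for the Gauss curve rigorously and to track all normalizations so that the $2^{-2k}$ factor emerges on the topological-recursion side. This requires showing that the contour $\gamma^{(j)}$ used to define the Voros coefficient coincides (with the correct orientation and scale) with the cycle representing $\partial/\partial\lambda_j$, handling carefully the unstable $(0,1)$- and $(0,2)$-contributions that source the correction $\hbar^{-1}\partial_{\lambda_j}F_0$, and verifying that the argument is not spoiled by the fact that the Gauss curve fails the admissibility condition of \cite{BE}. Once these points are settled, the remaining computation is purely combinatorial.
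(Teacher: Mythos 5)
Your overall strategy is the same as the paper's: express $V^{(j)}$ through the $W_{g,n}$ via Theorem \ref{thm:WKB-Wg,n}, convert multiple period integrals of $W_{g,n}$ into $\lambda_j$-derivatives of $F_g$ by the variation formula (the paper's Lemma \ref{lemma:Gauss_variation}, proved from \cite[Theorem 5.1]{EO} and the identity $\partial_{\lambda_j}y\,dx-\partial_{\lambda_j}x\,dy=\int_{\beta_{j-}}^{\beta_{j+}}B$), and match against the Taylor expansion of $F(\lambda_j\pm\hbar/2)$. Your expansion of the right-hand side and your treatment of the unstable $(0,1)$, $(0,2)$ terms are correct.

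However, there is a genuine gap in your middle step. Since $\varphi$ in \eqref{eq:wave-z} is built from integrals along the divisor $D(z;\underline{\nu})$, what the definition of the Voros coefficient actually produces is
\begin{equation*}
V^{(0)}=\sum_{2g-2+n\ge 1}\frac{\hbar^{2g-2+n}}{n!}\left[\left\{\int_{D(\beta_{0+};\underline{\nu})}\right\}^{n}-\left\{\int_{D(\beta_{0-};\underline{\nu})}\right\}^{n}\right]W_{g,n},
\end{equation*}
and a difference of $n$-th powers is not an $n$-th power of a difference; your displayed formula $V^{(j)}=\sum\frac{\hbar^{2g-2+n}}{n!}\bigl\{\int_{\gamma^{(j)}}\bigr\}^{n}W_{g,n}$ is therefore not correct as written. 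The missing ingredient is the paper's Lemma \ref{lem:Gauss_path_deformation}: using the anti-invariance of $W_{g,n}$ under $z\mapsto\bar z$ and $\overline{\beta_{j\pm}}=\beta_{j\mp}$, each $\int_{D(\beta_{0\pm};\underline{\nu})}$ reduces to a linear combination of the half-periods $\tfrac12\int_{\beta_{j-}}^{\beta_{j+}}$; in the symmetric case $\nu_{j+}=\nu_{j-}$ of the stated theorem one gets exactly $\pm\tfrac12\int_{\beta_{0-}}^{\beta_{0+}}$, so the bracket becomes $\bigl((\tfrac12)^{n}-(-\tfrac12)^{n}\bigr)\partial_{\lambda_0}^{n}F_g$. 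It is this antisymmetrization between the two endpoints, not a parity of $W_{g,n}$ under the hyperelliptic involution, that kills the even-$n$ terms (note $\partial_{\lambda_0}^{2}F_g\neq 0$, so those cycle integrals do not vanish individually), and it is also what produces the factor $2^{-2k}=2\cdot(\tfrac12)^{2k+1}$ that you were hoping to recover from a ``normalization of the cycle.'' Without this divisor analysis the term-by-term matching cannot be completed.
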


To obtain the above results, the expression 
\eqref{eq:wave-z-intro} plays an essential role.
Note that a similar result is valid for all examples in Table \ref{table:classical}.
Although there is a confluence diagram in Fig.\,\ref{fig:confluence-WKB}, 
we employed case-by-case study; this is because a coalescence of ramification point 
and poles of $W_{0,1}(z)$ happens in the confluent procedure 
for which we need more careful treatment.

It should be noted that, although several Voros coefficients are associated
with a given differential equation (having several singular points) 
as in the case of Gauss hypergeometric differential equation, 
they are described in terms of the generating function of the free energies 
which is canonically (or uniquely) associated with a given spectral curve.
These results imply that the free energy is more fundamental object that 
controls the Voros coefficient. 

The above formula has several applications.
Firstly, by combining the above relation and 
the contiguity relations for hypergeometric equations 
(cf. \S \ref{sec:contiguity-relation}),
we can derive three-term difference equations 
which the free energy satisfies (Theorem \ref{thm:main-theorem-in-part2} (ii)). 
By solving them, we obtain concrete forms of the free energies and
those of the Voros coefficients as well 
(Theorem \ref{thm:main-theorem-in-part2} (iii) and (iv)). 
The explicit form of the $g$-th free energy $F_g$ 
is shown in Table \ref{table:free-energy}.  
Interestingly, the all free energies discussed in this paper 
are expressed in terms of the Bernoulli numbers. 
Consequently, the Voros coefficients of the quantum curves 
are written in terms of the Bernoulli polynomials. 
We note that the idea to use the contiguity relation 
to obtain the explicit formula for Voros coefficients 
are due to \cite{Takei08}.
We hope the technique to compute the free energy and the Voros coefficient 
shown in this paper is applicable to more wide class of 
spectral curves and the associated quantum curves.

\begin{table}[t]
\begin{center}
\begin{tabular}{cc}\hline
 & $F_g$ $(g \geq 2)$
\\\hline\hline
%%%%%%%%%%%%%%%%%%%%%%%%%%%%%%%%%%%%%%%%
\parbox[c][3.5em][c]{0em}{}
Gauss (\S\ref{subsection:quantum-Gauss})
&
$\dfrac{B_{2g}}{2g (2g-2)}
\left\{
\dfrac{1}{(\lambda_0 + \lambda_1 + \lambda_{\infty})^{2g-2}}
+ \dfrac{1}{(\lambda_0 - \lambda_1 + \lambda_{\infty})^{2g-2}}\right.$
\\[-.5em]
\parbox[c][3.5em][c]{0em}{}
&
\hspace{7em}
$\left.
+ \dfrac{1}{(\lambda_0 + \lambda_1 - \lambda_{\infty})^{2g-2}}
+ \dfrac{1}{(\lambda_0 - \lambda_1 - \lambda_{\infty})^{2g-2}}
\right.$
\\[-.5em]
\parbox[c][3.5em][c]{0em}{}
&
\hspace{5em}
$\left.
- \dfrac{1}{(2 \lambda_0)^{2g-2}}
- \dfrac{1}{(2 \lambda_1)^{2g-2}}
- \dfrac{1}{(2 \lambda_{\infty})^{2g-2}}
\right\}$
\\\hline
%%%%%%%%%%%%%%%%%%%%%%%%%%%%%%%%%%%%%%%%
\parbox[c][3.5em][c]{0em}{}
\begin{minipage}{.15\textwidth}
Degenerate Gauss
(\S\ref{subsection:quantum-d-Gauss})
\end{minipage}
&
$\dfrac{B_{2g}}{2g (2g-2)}
\left\{
\dfrac{2}{(\lambda_1 + \lambda_\infty)^{2g-2}}
+ \dfrac{2}{(\lambda_1 - \lambda_\infty)^{2g-2}}
- \dfrac{1}{(2 \lambda_1)^{2g-2}}
- \dfrac{1}{(2 \lambda_\infty)^{2g-2}}
\right\}$
\\\hline
%%%%%%%%%%%%%%%%%%%%%%%%%%%%%%%%%%%%%%%%
\parbox[c][3.5em][c]{0em}{}
Kummer 
(\S\ref{subsection:quantum-Kummer})
&
$\dfrac{B_{2g}}{2g (2g-2)}
\left\{
\dfrac{1}{(\lambda_0 + \lambda_\infty)^{2g-2}}\right.
\left.+ \dfrac{1}{(\lambda_0 - \lambda_\infty)^{2g-2}} -
 \dfrac{1}{(2\lambda_0)^{2g-2}}\right\}$
\\\hline
%%%%%%%%%%%%%%%%%%%%%%%%%%%%%%%%%%%%%%%%
\parbox[c][3.5em][c]{0em}{}
Legendre 
(\S\ref{subsection:quantum-Legendre})
& $\dfrac{B_{2g}}{2g(2g-2)} \left\{ \dfrac{4}{\lambda_\infty^{2g-2}} 
 - \dfrac{1}{(2 \lambda_\infty)^{2g-2}} \right\}$
\\\hline
%%%%%%%%%%%%%%%%%%%%%%%%%%%%%%%%%%%%%%%%
\parbox[c][3.5em][c]{0em}{}
Bessel (\S\ref{subsection:quantum-Bessel})
& $- \dfrac{B_{2g}}{2g (2g-2)} \dfrac{1}{(2\lambda_0)^{2g-2}}$
\\\hline
%%%%%%%%%%%%%%%%%%%%%%%%%%%%%%%%%%%%%%%%
\parbox[c][3.5em][c]{0em}{}
Whittaker 
(\S\ref{subsection:quantum-Whittaker})
& $\dfrac{B_{2g}}{2g (2g-2)} \dfrac{2}{\lambda_\infty^{2g-2}}$
\\\hline
%%%%%%%%%%%%%%%%%%%%%%%%%%%%%%%%%%%%%%%%
\parbox[c][3.5em][c]{0em}{}
Weber 
(\S\ref{subsection:quantum-Weber})
& $\dfrac{B_{2g}}{2g (2g-2)} \dfrac{1}{\lambda_\infty^{2g-2}}$
\\\hline
%%%%%%%%%%%%%%%%%%%%%%%%%%%%%%%%%%%%%%%%
\end{tabular}
\end{center}
\caption{Free energies for the Spectral curves $y^2 - Q_{\text{cl}}(x, \underline{\lambda}) = 0$
in Table \ref{table:classical}.
In this table $B_{2g}$ denotes the $2g$-th Bernoulli number 
(see \eqref{def:Bernoulli} for the definition).}
\label{table:free-energy}
\end{table}

The paper is organized as follows: 
In \S\ref{sec:from-part-I} we recall several notions in the 
topological recursion, and the quantum curve theorem (cf. \cite[Theorem 3.5]{IKT-part1}). 
The quantum curves associated with the spectral curves in Table \ref{table:classical} 
are also computed. 
In \S\ref{sec:Voros-vs-TR}, after introducing the notion of the Voros coefficient, 
we state our main theorem (Theorem \ref{thm:main-theorem-in-part2}). 
We give a proof of our result only for Gauss curve, 
but the other examples can be treated similarly.
For the convenience of readers, in appendix 
we summarize several formulas on contiguity relations (\S \ref{sec:contiguity-relation}) and 
on Bernoulli numbers/polynomials (\S \ref{sec:bernoulli}) 
which will be used to prove our main results.

%%%%%%%%%%%%%%%%%%%%%%%%%%%%%%%%%%%%%%%%%%%%%%%%%%%%%%%%%%%%%%%%%%%%%%%
\section*{Acknowledgement}
We are grateful to 
Takashi Aoki, 
Takahiro Kawai,
Toshinori Takahashi,
Yoshitsugu Takei 
and 
Mika Tanda
for helpful discussions and communications.
This work is supported, in part, by JSPS KAKENHI Grand Numbers 
16K17613, 16H06337, 16K05177, 17H06127.

%%%%%%%%%%%%%%%%%%%%%%%%%%%%%%%%%%%%%%%%%%%%%%%%%%%%%%%%%%%%%%%%%%%%%%%
%%%%%%%%%%%%%%%%%%%%%%%%%%%%%%%%%%%%%%%%%%%%%%%%%%%%%%%%%%%%%%%%%%%%%%%
%%%%%%%%%%%%%%%%%%%%%%%%%%%%%%%%%%%%%%%%%%%%%%%%%%%%%%%%%%%%%%%%%%%%%%%
\section{Topological recursion and quantum curves : Results from Part I}
\label{sec:from-part-I}

In this section we briefly recall a result obtained 
in the Part I \cite{IKT-part1}. Definitions of some notions which 
is relevant in Part II are also recalled. 

%%%%%%%%%%%%%%%%%%%%%%%%%%%%%%%%%%%%%%%%%%%%%%%%%%%%%%%%%%%%%%%%%%%%%%%
\subsection{Preliminaries of the topological recursion}
\label{sec:TR}

%%%%%%%%%%%%%%%%%%%%%%%%%%%%%%%%%%%%%%%%%%%%%%%%%%%%%%%%%%%%%%%%%%%%%%%
\subsubsection{Definition of correlation functions}
Let us briefly recall several facts on 
Eynard-Orantin's theory (\cite{EO}) 
together with assumptions imposed on \cite{IKT-part1}.

% A spectral curve (of genus $0$) is a triplet 
% $(\mathbb{P}^1, x(z), y(z))$,
% where $x(z)$ and $y(z)$ 
% are non-constant rational functions on $\mathbb{P}^1$ 
% whose exterior derivatives $dx$ and $dy$ never vanish 
% simultaneously. 
Let us start from the definition of genus $0$ spectral curves. 
(We adopt restricted situation; see \cite{EO} for general definition 
of spectral curves.)
\begin{dfn}
A spectral curve (of genus $0$) is a pair $(x(z), y(z))$
of non-constant rational functions on $\mathbb{P}^1$, 
such that their exterior differentials 
$dx$ and $dy$ never vanish simultaneously. 
\end{dfn}

Let $R$ be the set of ramification points of $x(z)$,
i.e., $R$ consists of zeros of $dx(z)$ of 
any order and poles of $x(z)$
whose orders are greater than or equal to two
(here we consider $x$ as a branched covering map
from $\mathbb{P}^1$ to itself).
We also assume that all ramification points of $x(z)$ 
are simple so that the local conjugate map $z \mapsto \bar{z}$ 
near each ramification point is well-defined.

In our situation, the topological recursion is 
formulated as follows. 
(See \cite{EO} for general situation; 
e.g., when the spectral curve has higher genus.)

\begin{dfn}[{\cite[Definition 4.2]{EO}}]
Eynard-Orantin's correlation function
$W_{g, n}(z_1, \cdots, z_n)$ for $g \geq 0$ and $n \geq 1$
is defined
as a multidifferential on $(\mathbb{P}^1)^n$ 
using the recurrence relation
(called Eynard-Orantin's topological recursion)
\begin{align}
\label{eq:TR}
W_{g, n+1}(z_0, z_1, \cdots, z_n)
&:= \sum_{r \in R}
\Res_{z = r} K_r(z_0, z)
\Bigg[
W_{g-1, n+2} (z, \overline{z}, z_1, \cdots, z_n)
\\
&\qquad\qquad
+
\sum'_{\substack{g_1 + g_2 = g \\ I_1 \sqcup I_2 = \{1, 2, \cdots, n\}}}
W_{g_1, |I_1| + 1} (z, z_{I_1})
W_{g_2, |I_2| + 1} (\overline{z}, z_{I_2})
\Bigg]
\notag
\end{align}
for $2g + n \geq 2$ with initial conditions
\begin{align}
W_{0, 1}(z_0) &:= y(z_0) dx(z_0),
\quad
W_{0, 2}(z_0, z_1) = B(z_0, z_1)
:= \frac{dz_0 dz_1}{(z_0 - z_1)^2}.
\end{align}
Here we set $W_{g,n} \equiv 0$ for a negative $g$,
\begin{equation}
\label{eq:RecursionKernel}
K_r(z_0, z)
:= \frac{1}{2\big(y(z) - y(\overline{z})\big) dx(z)}
\int^{\zeta = z}_{\zeta = \overline{z}} B(z_0, \zeta)
\end{equation}
is a recursion kernel defined near a ramification point $r \in R$,
$\sqcup$ denotes the disjoint union,
and
the prime ${}'$ on the summation symbol in \eqref{eq:TR}
means that we exclude terms for
$(g_1, I_1) = (0, \emptyset)$
and
$(g_2, I_2) = (0, \emptyset)$
(so that $W_{0, 1}$ does not appear) in the sum.
We have also used the multi-index notation:
for $I = \{i_1, \cdots, i_m\} \subset \{1, 2, \cdots, n\}$
with $i_1 < i_2 < \cdots < i_m$, $z_I:= (z_{i_1}, \cdots, z_{i_m})$.
\end{dfn}

See \cite{EO} for properties of $W_{g,n}$. 
We define ineffective ramification points 
as ramification points 
at where $W_{g,n}$ with $2g-2+n\ge 0$
is holomorphic in each variable. 
We also introduce the set of effective ramification points
$R^{\ast} \subset R$ consisting of ramification points 
which are not ineffective (cf. \cite[Definition 2.6]{IKT-part1}). 
Ineffective ramification points often appear as 
poles of $x(z)$, and those points can be chosen as 
an end-point of the integration path for $W_{g,n}$ 
when we discuss quantum curves 
(see \S \ref{subsection:quantum-curve} below).

%%%%%%%%%%%%%%%%%%%%%%%%%%%%%%%%%%%%%%%%%%%%%%%%%%%%%%%%%%%%%%%%%%%%%%%
\subsubsection{Definition of free energies}
\label{section:free-energy-part-2}

The free energy $F_g$ ($g\geq 0$) is defined for the spectral curve,
and one of the most important objects in Eynard-Orantin's theory.

\begin{dfn}[{\cite[Definition 4.3]{EO}}]
For $g \geq 2$, the $g$-th free energy $F_g$ is defined by
\begin{equation}
\label{def:Fg2}
F_g := \frac{1}{2- 2g} \sum_{r \in R} \Res_{z = r}
\big[\Phi(z) W_{g, 1}(z) \big],
\end{equation}
where $\Phi(z)$ is a primitive of $y(z) dx(z)$. 
The free energies $F_0$ and $F_1$ for $g=0$ and $1$ 
are also defined, but in a different manner 
(see \cite[\S 4.2.2 and \S 4.2.3]{EO} for the definition). 
\end{dfn}
Note that the right-hand side of \eqref{def:Fg2} does not
depend on the choice of the primitive.
The generating series
\begin{equation} \label{eq:total-free-energy}
F := \sum_{g = 0}^{\infty} \hbar^{2g-2} F_g
\end{equation}
of $F_g$'s is also called
the free energy of the spectral curve.

%%%%%%%%%%%%%%%%%%%%%%%%%%%%%%%%%%%%%%%%%%%%%%%%%%%%%%%%%%%%%%%%%%%%%%%
\subsection{Quantization of spectral curves}
\label{subsection:quantum-curve}

As well as \cite{IKT-part1}, we assume that the spectral curve 
$(x(z), y(z))$ satisfies
\begin{itemize}
\item[(A1)]
A function field $\mathbb{C}(x(z), y(z))$ 
coincides with $\mathbb{C}(z)$.

\item[(A2)]
% For any ramification point $r$ of $x(z)$:
% \begin{itemize}
% \item[(i)]
% If $r$ is a  zero of $dx(z)$, and if $y(z)$ is holomorphic near $r$,
% then $dy(r) \neq 0$.

% \item[(ii)]
If $r$ is a ramification point which is a pole of $x(z)$, 
and if $Y(z) = - x(z)^2 y(z)$ is holomorphic near $r$,
then $dY(r) \neq 0$.
% \end{itemize}

\item[(A3)]
All of the ramification points of $x(z)$ are simple,
i.e., the ramification index of each ramification point
is two.

\item[(A4)]
We assume branch points are all distinct,
where a branch point is defined as the image of
a ramification point by $x(z)$.
\end{itemize}

Because of the assumption (A1), we can find an irreducible
polynomial $P(x, y) \in \mathbb{C}[x, y]$ 
for which $P(x(z), y(z)) = 0$
holds at all $z$  except for poles of $x(z)$ and $y(z)$.
We also call this curve $\mathcal{C}$ 
a spectral curve if there is no fear of confusions.

To obtain quantum curves, we further impose 

\begin{description}
\item[{\rm(AQ1)}]
The rational functions $(x(z), y(z))$ satisfy
$P(x(z), y(z)) = 0$ with an irreducible polynomial
$P(x, y) = p_0(x) y^2 + p_1(x) y + p_2(x) \in \mathbb{C}[x, y]$,
where $p_0(x)$ is a nonzero polynomial.

\item[{\rm{(AQ2)}}]
The differential $(y(z) - y(\bar{z})) dx(z)$ does not vanish 
except for at ramification points. 
\end{description}

Then, the map $x: \mathbb{P}^1 \rightarrow \mathbb{P}^1$ 
is a 2-sheeted branched covering, 
and the conjugate map becomes a globally defined 
rational map from $\mathbb{P}^1$ to itself.

Let $W_{g, n}(z_1, \cdots, z_{n})$ be the correlation functions 
of a spectral curve $(x(z),y(z))$ defined 
by the topological recursion. 
Following the idea given in \cite{BE}, 
we associate a formal series defined by
\begin{align}
\label{eq:wave-z}
\varphi(z;\underline{\nu}, \hbar)
&= \exp \Biggl[ \frac{1}{\hbar}\int_{\zeta \in D(z;\underline{\nu})} W_{0, 1}(\zeta)
	+ \frac{1}{2!} \int_{\zeta_1\in D(z ; \underline{\nu})} 
	\int_{\zeta_2 \in D(z ; \underline{\nu})}
   \left( W_{0, 2}(\zeta_1, \zeta_2) 
   - \frac{dx(\zeta_1) \, dx(\zeta_2)}
   {(x(\zeta_1) - x(\zeta_2))^2} \right) \\
& \qquad\qquad
 + \sum_{m = 1}^{\infty} \hbar^m
	\Biggl\{ \sum_{\substack{2g - 2 + n = m \\ g \geq 0, \, n \geq 1}}
		\frac{1}{n!} \int_{\zeta_1 \in D(z ; \underline{\nu})} 
	\cdots \int_{\zeta_n \in D(z ; \underline{\nu})} 
	W_{g, n}(\zeta_1, \cdots, \zeta_n)
	\Biggr\} \Biggr],
\notag
\end{align}
where
\begin{equation}
D(z;\underline{\nu}) = [z] - \sum_{\beta \in B} \nu_\beta [\beta]
\end{equation}
is a divisor on $\mathbb{P}^1$ 
with a tuple $\underline{\nu} = (\nu_{\beta})_{\beta \in B}$ 
of complex numbers satisfying $\sum_{\beta \in B}\nu_{\beta} = 1$, 
and $B \subset \mathbb{P}^1 \setminus R^\ast$ is a finite set. 
For a differential $\omega(z)$, we define its integration 
along the divisor $D(z; \underline{\nu})$ by  
\begin{equation}
\int_{\zeta \in D(z; \nu)} \omega(\zeta) := 
\sum_{k = 1}^m \nu_k \int^{\zeta=z}_{\zeta=\beta} \omega(\zeta),
\end{equation}
and extend the definition to multidifferentials in an obvious way.
Precisely speaking, the above integrals of $W_{0,1}$ and $W_{0,2}$ are not 
well-defined due to the singularity at $\beta \in B$. 
Here we define these integrals through a certain regularization technique; 
e.g., method used in Remark \ref{remark:zeta-reg}.
Since the residue of $W_{g,n}$ at each ramification point
(with respect to each variable $z_1, \cdots, z_n$) vanishes, 
the integrals in \eqref{eq:wave-z} do not depend on 
the choice of paths from $\beta$ to $z$.

In order to give the result of \cite{IKT-part1}
on quantum curve, we introduce
\begin{equation}
\Sing (P) :=\big\{b \in \mathbb{P}^1 \mid \rho (b; P) \leq -2\}, \quad
\Sing_2 (P) :=\big\{b \in \mathbb{P}^1 \mid \rho (b; P) = -2\}
\end{equation}
for a polynomial 
$P(x, y) = p_0(x) y^2 + p_1(x) y + p_2(x) \in {\mathbb C}[x, y]$,
where the index $\rho(b;P)$ of $P$ at $b \in \mathbb{P}^1$ 
is given by
\begin{equation}
\rho(b; P) :=
\begin{cases}
 \ord_{b} Q_0(x) & (b \neq \infty),\\
 \ord_{0} Q_{0}^{(\infty)}(x) & (b = \infty)
\end{cases}
\end{equation}
with
\begin{equation} \label{eq:Q0-in-section-3}
Q_0(x) := \frac{1}{4}\left(
\frac{p_1(x)}{p_0(x)}\right)^2 -  \frac{p_2(x)}{p_0(x)}
\quad\text{and}\quad
Q_0^{(\infty)}(x) := \frac{1}{x^4} Q_0({1}/{x}).
\end{equation}
We also use the following symbols:
\begin{align}
\Delta(z) &:= y(z) - y(\overline{z}), \\
C_{\beta} &:= \Res_{z = \beta} \Delta(z) dx(z).
\end{align}

Then, one of the main result of \cite{IKT-part1} 
is given as follows.
\begin{thm}[{\cite[Theorem 3.5]{IKT-part1}}]
\label{thm:WKB-Wg,n}
We assume (A1) -- (A4), (AQ1) and (AQ2).
Let
\begin{equation} \label{eq:divisor-theorem}
D(z; \underline{\nu}) := [z] - \sum_{\beta \in B}\nu_{\beta} [\beta]
\end{equation}
be a divisor on $\mathbb{P}^1$, where
$B := x^{-1}(\Sing (P))$,
and $\underline{\nu} =(\nu_{\beta})_{\beta \in B}$ is 
a tuple of complex numbers satisfying
$\sum_{\beta \in B} \nu_{\beta} = 1$.
Then $\psi(x;\underline{\nu}, \hbar) := \varphi (z(x); \underline{\nu}, \hbar)$, where
$z(x)$ denotes an inverse function of $x(z)$ and
$\varphi(z, \hbar)$ is defined by \eqref{eq:wave-z}
with the integration divisor \eqref{eq:divisor-theorem},
is a WKB type formal solution of 
\begin{equation}
\label{eq:quantization}
\hat{P} \psi :=
\left( \hbar^2\frac{d^2}{dx^2} + q(x, \hbar) \hbar \frac{d}{dx} 
+ r(x, \hbar) \right) \psi = 0.
\end{equation}
Here
\begin{equation} \label{eq:leading-coeff-of-quantum-curve}
q(x, \hbar) = q_0(x) + \hbar q_1(x),
\quad
r(x, \hbar) = r_0(x) + \hbar r_1(x) + \hbar^2 r_2(x)
\end{equation}
whose leading terms are respectively given by
\begin{equation}
q_0(x) = \frac{p_1(x)}{p_0(x)},
\quad
r_0(x) = \frac{p_2(x)}{p_0(x)}
\end{equation}
and their lower order terms are determined by
\begin{align}
\label{eq:WKB-Wg,n:q1}
x'(z) q_1(x(z))
&=
- \frac{\Delta'(z)}{\Delta(z)}
+ \frac{2}{z - \overline{z}}
- \sum_{\beta \in B}
\frac{\nu_{\beta} + \nu_{\overline{\beta}}}{z - \beta},\\
\label{eq:WKB-Wg,n:r1}
x'(z) r_1(x(z))
&= \frac{1}{2} x'(z) \frac{\partial q_0}{\partial x}\Big|_{x = x(z)}
+ \frac{1}{2} x'(z) q_0(x(z)) q_1(x(z))
+ \frac{1}{2} \Delta(z)
\sum_{\beta \in B}
\frac{\nu_{\beta} - \nu_{\overline{\beta}}}{z - \beta},
\\
\label{eq:WKB-Wg,n:r2}
x'(z) r_2(x(z))
&= \Delta(z) \sum_{\beta \in B_1}
 \frac{\nu_{\beta}\nu_{\overline{\beta}}}{C_{\beta}} 
 \frac{1}{z - \beta}.
\end{align}
Here we set $B_1 := x^{-1}(\Sing_2(P))$.
\end{thm}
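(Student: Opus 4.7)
My plan is to rewrite \eqref{eq:quantization} as the Riccati hierarchy via the substitution $\psi = \exp\bigl(\int S\, dx\bigr)$, obtaining $\hbar^2(S^2 + S') + \hbar q S + r = 0$. With $S(x, \hbar) = \sum_{m \geq -1} \hbar^m S_m(x)$, the order-$\hbar^0$ relation is $S_{-1}^2 + q_0 S_{-1} + r_0 = 0$; by (AQ1) and Vieta's formula applied to $p_0(x) y^2 + p_1(x) y + p_2(x)$ with roots $y(z), y(\overline{z})$, this is satisfied by $S_{-1}(x(z)) = y(z)$, $q_0 = p_1/p_0$, $r_0 = p_2/p_0$, and it follows that $2 S_{-1} + q_0 = \Delta(z)$. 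The orders $\hbar^1$ and $\hbar^2$ are to determine $q_1, r_1, r_2$ respectively, and for every $m \geq 3$ the order-$\hbar^m$ relation reduces to the recursion
\[
\Delta(z)\, S_{m-1}(x(z)) \;=\; -\, S_{m-2}'(x(z)) - q_1(x(z))\, S_{m-2}(x(z)) - \sum_{\substack{i+j = m - 2 \\ i, j \geq 0}} S_i(x(z))\, S_j(x(z)).
\]

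Next, I identify the WKB coefficients $S_m$ in terms of the correlation functions. Differentiating \eqref{eq:wave-z} in $z$ and using $\sum_{\beta} \nu_\beta = 1$ to collapse all $\beta$-endpoint contributions into a single evaluation at $z$, one obtains
\[
S_m(x(z))\, dx(z) \;=\; \sum_{\substack{2g + n - 2 = m \\ g \geq 0, \, n \geq 1}} \frac{1}{(n-1)!} \int_{\zeta_2 \in D} \cdots \int_{\zeta_n \in D} W_{g, n}(z, \zeta_2, \ldots, \zeta_n) \quad (m \geq 1),
\]
while $S_{-1}(x(z))\, dx(z) = W_{0,1}(z) = y(z)\, dx(z)$ and $S_0(x(z))\, dx(z)$ is given by the diagonal regularization of $W_{0,2}$. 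The order-$\hbar^0$ Riccati identity is then immediate.

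For the orders $\hbar^1$ and $\hbar^2$, the explicit formulas \eqref{eq:WKB-Wg,n:q1}--\eqref{eq:WKB-Wg,n:r2} are to be read off from the singularity structure of $S_0$ and $S_1$. Since $q_1 \circ x$ and $r_i \circ x$ (times $x'(z)$) must be invariant under $z \mapsto \overline{z}$, matching the singular parts of both sides of the Riccati at the poles of $S_m$---the ramification points $r \in R$ (contributed by the diagonal $W_{0,2}$ regularization) and the divisor points $\beta \in B$---uniquely yields the stated expressions, with the combinations $\nu_\beta \pm \nu_{\overline{\beta}}$ arising from symmetrization/antisymmetrization and $C_\beta$ entering through $\Res_{z = \beta} \Delta(z)\, dx(z)$. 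That $q$ and $r$ truncate in $\hbar$ with degrees at most $1$ and $2$ follows because the divisor supplies only simple poles at the $\beta$ and the diagonal regularization only double poles at ramification points.

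The principal task, and the main obstacle, is the propagation step for $m \geq 3$: proving that the recursion in the first display is a consequence of the topological recursion \eqref{eq:TR}. The strategy is to multiply \eqref{eq:TR} by $\Delta(z_0)\, dx(z_0)$, integrate the free variables $z_1, \ldots, z_{n-1}$ over $D$, sum over all $(g, n)$ with $2g + n = m$, and use the local kernel identity $\Delta(z)\, dx(z)\, K_r(z_0, z) = \tfrac{1}{2} \int_{\overline{z}}^{z} B(z_0, \zeta)$ to convert the residue sum at ramification points into the derivative and bilinear terms on the right-hand side. The difficulty lies in the combinatorial bookkeeping: the bilinear sum $\sum' W_{g_1} W_{g_2}$ in \eqref{eq:TR}, once integrated over $D$, must regroup into $\sum_{i+j = m-2} S_i\, S_j$, which requires careful isolation of the excluded $(g_k, I_k) = (0, \emptyset)$ terms and careful cancellation of boundary contributions at $\beta$. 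Assumption (AQ2) is used to ensure $\Delta(z)\, dx(z)$ vanishes only at ramification points so that division by $\Delta$ creates no spurious poles, (A3)--(A4) give a globally well-defined sheet-exchange $z \mapsto \overline{z}$, and the choice $B \cap R^{\ast} = \emptyset$ kills boundary residues at effective ramification points.
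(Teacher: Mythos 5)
This theorem is not proved in the present paper at all: it is imported verbatim from Part I (\cite[Theorem 3.5]{IKT-part1}), so there is no in-paper argument to compare against. Judged on its own terms, your proposal correctly sets up the standard framework (and the elementary algebra is right: the Riccati hierarchy, $2S_{-1}+q_0=\Delta$ via Vieta, the identification of $S_m\,dx$ with the $(n-1)$-fold integrated correlation functions after using $\sum_\beta\nu_\beta=1$, and the observation that rationality of $q_1,r_1,r_2$ in $x$ forces the symmetrization in $z\mapsto\bar z$ that separates $q_1$ from $r_1$ at order $\hbar^1$).

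However, there is a genuine gap: the entire content of the theorem is concentrated in the two steps you explicitly defer, and neither is carried out. First, the propagation step for $m\ge 3$ --- that the residue sum in \eqref{eq:TR} regroups into $-S_{m-2}'-q_1S_{m-2}-\sum_{i+j=m-2}S_iS_j$ --- is not a routine consequence of the kernel identity $\Delta(z)\,dx(z)\,K_r(z_0,z)=\tfrac12\int_{\bar z}^{z}B(z_0,\cdot)$. One must first establish the linear and quadratic loop equations (i.e.\ that $W_{g,n+1}(z,z_I)+W_{g,n+1}(\bar z,z_I)$ and the combination $W_{g-1,n+2}(z,\bar z,z_I)+\sum'W_{g_1}W_{g_2}$ are, as functions of $z$, pullbacks of rational functions of $x$ with controlled poles), then deform the contour off $R$ to pick up contributions at $z=z_0$, $z=\bar z_0$, and at the points of $B$; without this the ``conversion of the residue sum into the derivative and bilinear terms'' is an assertion, not an argument. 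Second, the explicit formulas \eqref{eq:WKB-Wg,n:q1}--\eqref{eq:WKB-Wg,n:r2} require an actual computation of the local behaviour at each $\beta\in B$ of the regularized integrals of $W_{0,1}$ and $W_{0,2}$ appearing in $S_0$ and $S_1$ (this is where $C_\beta=\Res_{z=\beta}\Delta\,dx$ and the distinction $B_1=x^{-1}(\Sing_2(P))\subset B$ enter, and why $r_2$ sees only $B_1$); ``matching singular parts \ldots uniquely yields the stated expressions'' skips exactly the computation that produces them. In short, you have reproduced the correct architecture of the Part~I proof, but the two load-bearing steps are named as obstacles rather than overcome, so the proposal is a plan rather than a proof.
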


It is easy to see that $C_{\beta} \ne 0$ 
for $\beta \in B_1$, and hence, 
the right hand-side of \eqref{eq:WKB-Wg,n:r2} 
is well-defined. 
We call the equation $\hat{P}\psi = 0$ given by 
Theorem \ref{thm:WKB-Wg,n} 
a quantum curve of the spectral curve 
since the classical limit of the equation
$\hat{P}\psi = 0$ coincides with $P(x,y)=0$
which is the initial spectral curve for 
the topological recursion.

%%%%%%%%%%%%%%%%%%%%%%%%%%%%%%%%%%%%%%%%%%%%%%%%%%%%%%%%%%%%%%%%%%%%%%%
\subsection{Confluent family of quantum curves} 
\label{section:example-quantum-curves}

Here we show the list of quantum curves associated with 
the spectral curves arising from the confluent family 
of the hypergeometric equations (see Table \ref{table:classical}). 
We can observe that these equations satisfied by 
classical special functions are recovered as the quantum curve.
An application of the result will be presented in next section.

%%%%%%%%%%%%%%%%%%%%%%%%%%%%%%%%%%%%%%%%%%%%%%%%%%%%%%%%%%%%%%%%%%%%%%%
\subsubsection{Quantum Gauss curve}
\label{subsection:quantum-Gauss}

Let us consider the Gauss curve defined by 
\begin{equation}
\label{eq:Gauss_P(x,y)}
P(x, y)
= x^2 (1 - x)^2 y^2 - \{ {\lambda_{\infty}}^2 x^2 
- ({\lambda_{\infty}}^2 + {\lambda_0}^2 - {\lambda_1}^2)x 
+ {\lambda_0}^2 \} = 0.
\end{equation}
Note that the curve does not satisfy the admissibility condition 
of \cite[Definition 2.7]{BE}; that is, the Newton polygon of 
\eqref{eq:Gauss_P(x,y)} contains an interior point. 
Our spectral curve is obtained as  
a rational parameterization of \eqref{eq:Gauss_P(x,y)} given by
\begin{equation}
\label{eq:Gauss_parameterization}
\begin{cases}
\displaystyle
x = x(z)
= \frac{\sqrt{\Lambda}}{4 {\lambda_{\infty}}^2} 
\left( z + \frac{1}{z} \right)
+ \frac{{\lambda_{\infty}}^2 + {\lambda_0}^2 
- {\lambda_1}^2}{2 {\lambda_{\infty}}^2}
= \frac{\sqrt{\Lambda}}{4 {\lambda_{\infty}}^2} 
\frac{(z - \beta_{0+})(z - \beta_{0-})}{z}, \\[10pt]
\displaystyle
y = y(z)
= \frac{4 {\lambda_{\infty}}^3 z(z^2 - 1)}{\sqrt{\Lambda} 
(z - \beta_{0+})(z - \beta_{0-})(z - \beta_{1+})(z - \beta_{1-})},
\end{cases}
\end{equation}
where
\begin{align} \label{eq:Lambda-Gauss}
\Lambda = 
(\lambda_0 + \lambda_1 + \lambda_{\infty})
(\lambda_0 + \lambda_1 -\lambda_{\infty})
(\lambda_0 - \lambda_1 + \lambda_{\infty})
(\lambda_0 - \lambda_1 - \lambda_{\infty}), \\
%%%
\beta_{0\pm} =
%%%
- \frac{(\lambda_{0} \pm \lambda_\infty)^2 
- {\lambda_1}^2}{\sqrt{\Lambda}},
%%%
\quad
\beta_{1\pm} = \frac{(\lambda_{1} \pm \lambda_\infty)^2 
- {\lambda_0}^2}{\sqrt{\Lambda}}. 
%%%
\end{align}
We impose the following generic conditions for parameters 
$\underline{\lambda} = (\lambda_0, \lambda_1, \lambda_\infty)$
so that our assumptions are satisfied:
\begin{equation}
\lambda_0, \lambda_1, \lambda_{\infty} \ne 0 
\quad \text{and} \quad \Lambda \ne 0.
\end{equation}
Note that $R = \{1,-1 \} ~(=R^{\ast})$ and 
the conjugate map is given by $\overline{z} = 1/z$.

We also introduce the notation 
$\beta_{\infty+}:=0$ and $\beta_{\infty-}:=\infty$.
Then, for each $j \in \{0,1,\infty \}$,
we can verify that $\beta_{j \pm}$ are two preimages of 
$j$ by the map $x(z)$; that is, $x(\beta_{j\pm}) = j$ holds. 
Note also that the parameters are chosen so that 
\begin{equation} \label{eq:temperatures}
\pm \lambda_j = \Res_{z = \beta_{j \pm}} y(z) dx(z)
\end{equation}
holds for each $j \in \{0,1,\infty \}$. 
(We will use similar notations in the subsequent subsections so that 
the relation \eqref{eq:temperatures} holds.)

First few terms of the correlation functions and 
free energies are computed as
\begin{align*}
W_{0, 3}(z_1, z_2, z_3)
	&= \biggl\{ \frac{(\beta_{0+} + \beta_{0-} + 2)(\beta_{1+} 
	+ \beta_{1-} + 2)}
	{(z_1 + 1)^2 (z_2 + 1)^2 (z_3 + 1)^2} \\ &\qquad
	- \frac{(\beta_{0+} + \beta_{0-} - 2)
	(\beta_{1+} + \beta_{1-} - 2)}
	{(z_1 - 1)^2 (z_2 - 1)^2 (z_3 - 1)^2} \biggr\} 
	 \frac{dz_1 \, dz_2 \,dz_3}{4 \lambda_{\infty}}, 
	\notag \\
W_{1, 1}(z)
	&= - \frac{z(z - \beta_{0+})(z - \beta_{0-})
	(z - \beta_{1+})(z - \beta_{1-})}
	{2 \lambda_{\infty} (z^2 - 1)^4} \, dz,  
\end{align*}
\begin{align*}	
F_0
&= \frac{(\lambda_0 + \lambda_1 + \lambda_{\infty} )^2}{2} 
\log{(\lambda_0 + \lambda_1 + \lambda_{\infty} )}
+ \frac{(\lambda_0 - \lambda_1 + \lambda_{\infty})^2}{2} 
\log{(\lambda_0 - \lambda_1 + \lambda_{\infty} )} \\
&\quad
	+ \frac{(\lambda_0 + \lambda_1 - \lambda_{\infty})^2}{2} 
	\log{(\lambda_0 + \lambda_1 - \lambda_{\infty})}
	+ \frac{(\lambda_0 - \lambda_1 - \lambda_{\infty})^2}{2} 
	\log{(\lambda_0 - \lambda_1 - \lambda_{\infty})} \notag \\
&\quad
	- 2 {\lambda_0}^2 \log{(2 \lambda_0)}
	- 2 {\lambda_1}^2 \log{(2 \lambda_1)} 
	- 2 {\lambda_{\infty}}^2 \log{(2 \lambda_{\infty})} ,  
	\notag \\
F_1
	&= - \frac{1}{12} \log{\left( 
	\frac{\Lambda}{\lambda_0 \lambda_1 \lambda_{\infty}} \right)}, \\
F_2
	&= \frac{1}{960 {{\lambda_0}^2 {\lambda_1 }^2 \lambda_{\infty}}^2 
	{\Lambda}^2} \{
	({\lambda_0}^2 + {\lambda_1}^2) {\lambda_{\infty}}^{10}
	- (4 {\lambda_0}^4 + 23 {\lambda_0}^2 {\lambda_1}^2 
	+ 4 {\lambda_1}^4) {\lambda_{\infty}}^8 \\
	&\quad
		+ 2({\lambda_0}^2 + {\lambda_1}^2)
			(3 {\lambda_0}^4 + 8 {\lambda_0}^2 {\lambda_1}^2 
			+ 3 {\lambda_1}^4) {\lambda_{\infty}}^6
		- 2(2 {\lambda_0}^8 - 11 {\lambda_0}^6 {\lambda_1}^2 
		+ 74 {\lambda_0}^4 {\lambda_1}^4  
		\notag  \\
	&\quad
		- 11 {\lambda_0}^2 {\lambda_1}^6 + 2 {\lambda_1}^8) 
		{\lambda_{\infty}}^4
		+ ({\lambda_0}^2 - {\lambda_1}^2)^2 ({\lambda_0}^2 + {\lambda_1}^2)
		({\lambda_0}^4 - 22 {\lambda_0}^2 {\lambda_1}^2 
		+ {\lambda_1}^4) {\lambda_{\infty}}^2  
			\notag \\
	&\quad
		+ {\lambda_0}^2 {\lambda_1}^2 
		({\lambda_0}^2 - {\lambda_1}^2)^4 \}. \notag 
\end{align*}
Since
\begin{equation}
Q_0(x) = \frac{{\lambda_{\infty}}^2 x^2 
- ({\lambda_{\infty}}^2 + {\lambda_0}^2 - {\lambda_1}^2)x 
+ {\lambda_0}^2}{x^2(1-x)^2}, \quad
Q_{\infty}(x) = 
\frac{{\lambda_{0}}^2 x^2 
- ({\lambda_{\infty}}^2 + {\lambda_0}^2 - {\lambda_1}^2)x 
+ {\lambda_\infty}^2}{x^2(1-x)^2},
\end{equation}
we have
\begin{equation}
\Sing (P) = \Sing_2(P) = \{0,1,\infty\},
\quad
B = B_1 = \{\beta_{0+}, \beta_{0-}, 
\beta_{1+}, \beta_{1-}, \beta_{\infty+}, \beta_{\infty-}\}.
\end{equation}
Therefore we choose 
\begin{align}
\label{eq:Gauss_D}
D(z; \underline{\nu})
&= [z]
 - \nu_{0+} [\beta_{0+}] - \nu_{0-} [\beta_{0-}]
- \nu_{1+} [\beta_{1+}] - \nu_{1-} [\beta_{1-}]
- \nu_{\infty+} [\beta_{\infty+}] - \nu_{\infty-} [\beta_{\infty-}]
\end{align}
as the divisor for the quantization, 
where the parameters $\nu_{0\pm}$, $\nu_{1\pm}$ 
and $\nu_{\infty\pm}$ satisfy
\begin{equation} \label{eq:relation-Gauss-nu}
\nu_{0+} + \nu_{0-} + \nu_{1+} + \nu_{1-} 
+ \nu_{\infty+} + \nu_{\infty-} = 1. 
\end{equation}
We may easily verify that $C_{\beta_{j\pm}} = \pm 2 \lambda_j$ 
for $j \in \{0,1,\infty \}$.
Then, Theorem \ref{thm:WKB-Wg,n} gives the quantum curve of the Gauss curve
(quantum Gauss curve):
\begin{equation}
\label{eq:Gauss_eq(d/dx)}
	\left( \hbar^2 \frac{d^2}{dx^2} + q(x,\hbar) \hbar \frac{d}{dx}
	+ r(x,\hbar) \right) \psi = 0,
\end{equation}
where 
$q(x,\hbar) = q_0(x) + \hbar q_1(x)$, 
$r(x,\hbar) = r_0(x) + \hbar r_1(x) + r_2(x)$ and
\begin{align*}
q_0(x) & = 0, \quad
q_1(x) = \frac{1 - \nu_{0+} - \nu_{0-}}{x}
		+ \frac{1 - \nu_{1+} - \nu_{1-}}{x - 1}, \\
r_0(x) &= - \frac{{\lambda_{\infty}}^2 x^2 - ({\lambda_{\infty}}^2 +
 {\lambda_0}^2 - {\lambda_1}^2)x + {\lambda_0}^2}{x^2 (1 - x)^2}, \\
r_1(x) &=  - \frac{(\nu_{0+} - \nu_{0-}) \lambda_0}{x^2 (x - 1)}
		+ \frac{(\nu_{1+} - \nu_{1-}) \lambda_1}{x(x - 1)^2}
		+ \frac{(\nu_{\infty+} - \nu_{\infty-}) \lambda_{\infty}}{x(x - 1)}, \\
r_2(x) &=  - \frac{\nu_{0+} \nu_{0-}}{x^2 (x - 1)}
		+ \frac{\nu_{1+} \nu_{1-}}{x(x - 1)^2}
		+ \frac{\nu_{\infty+} \nu_{\infty-}}{x(x - 1)}.
\end{align*}
The equation \eqref{eq:Gauss_eq(d/dx)} has three regular singular 
points at $0$, $1$ and $\infty$, and hence, 
it must be equivalent to the Gauss hypergeometric equation. 
Indeed, \eqref{eq:Gauss_eq(d/dx)} is related to
the standard form of the Gauss hypergeometric equation
\begin{equation} \label{eq:standard-Gauss-operator}
\left\{ x(1-x) \frac{d^2}{dx^2} + \bigl( \gamma - (\alpha+\beta+1) x \bigr) 
\frac{d}{dx} - \alpha \beta \right\} \phi = 0
\end{equation}
by 
\begin{equation}
\alpha = \frac{\hat{\lambda}_0 + \hat{\lambda}_1 + \hat{\lambda}_\infty}{\hbar} 
+ \frac{1}{2}, \quad
\beta = \frac{\hat{\lambda}_0 + \hat{\lambda}_1 - \hat{\lambda}_\infty}{\hbar} 
+ \frac{1}{2},  \quad
\gamma = \frac{2 \hat{\lambda}_0}{\hbar} + 1
\end{equation}
and the gauge transform 
\begin{equation} \label{eq:gauge-tramsform-Gauss}
\psi = x^{\frac{\hat{\lambda}_0}{\hbar} + \frac{\nu_{0+}+\nu_{0-}}{2}} 
(x-1)^{\frac{\hat{\lambda}_1}{\hbar} + \frac{\nu_{1+}+\nu_{1-}}{2}}  \phi.
\end{equation}
Here we used the notation 
\begin{equation} \label{eq:lambda-hat-Gauss}
\hat{\lambda}_j = \lambda_j - \frac{\hbar \nu_j}{2},\quad
\nu_j = \nu_{j+} - \nu_{j-} \quad (j \in \{0,1,\infty\}).
\end{equation}
We will investigate this equation in more detail in 
\S \ref{sec:Voros-vs-TR}.

\begin{rem} \label{rem:symmetry-Gauss}
The Gauss curve has a symmetry. 
That is, the symplectic transformations 
\begin{equation}
(x,y) \mapsto (x_j, y_j) := 
\begin{cases}
(1-x,-y) & \text{for $j=1$}, \\
(1/x, -x^2 y) & \text{for $j=\infty$},
\end{cases}
\end{equation}
yield the permutation 
\begin{equation} \label{eq:symmetry-Gauss}
(\lambda_0,\lambda_1,\lambda_\infty) \mapsto  
\begin{cases}
(\lambda_1,\lambda_0,\lambda_\infty) & \text{for $j=1$}, \\
(\lambda_\infty,\lambda_1,\lambda_0) & \text{for $j=\infty$},
\end{cases}
\end{equation}
of parameters in the Gauss curve \eqref{eq:Gauss_P(x,y)}.
Since the symplectic transform preserves the free energy (cf. \cite[\S7]{EO}), 
this proves that the free energy is invariant under any permutation 
of the parameters $\lambda_0,\lambda_1,\lambda_\infty$. 

Moreover, this symmetry of the Gauss curve is lifted to that of 
quantum Gauss curve \eqref{eq:Gauss_eq(d/dx)}. 
Namely, the parameters in the quantum Gauss curve are permuted as 
\begin{equation} \label{eq:symmetry-Gauss}
(\lambda_0,\lambda_1,\lambda_\infty, 
\nu_{0\pm}, \nu_{1\pm}, \nu_{\infty\pm}) \mapsto  
\begin{cases}
(\lambda_1,\lambda_0,\lambda_\infty,
\nu_{1\pm}, \nu_{0\pm}, \nu_{\infty\pm}) & \text{for $j=1$}, \\
(\lambda_\infty,\lambda_1,\lambda_0,
\nu_{\infty\pm}, \nu_{1\pm}, \nu_{0\pm}) & \text{for $j=\infty$},
\end{cases}
\end{equation}
under the coordinate transform 
$x \mapsto x_j$ ($j\in\{1, \infty \}$) given above.
This will induce a certain symmetry property of 
the Voros coefficients 
(see Theorem \ref{thm:main-theorem-in-part2} (iv)).  

\end{rem}

%%%%%%%%%%%%%%%%%%%%%%%%%%%%%%%%%%%%%%%%%%%%%%%%%%%%%%%%%%%%%%%%%%%%%%%
\subsubsection{Quantum degenerate Gauss curve}
\label{subsection:quantum-d-Gauss}

In this subsection we consider a spectral curve defined by
\begin{equation}
\label{eq:confluente:classic}
P(x, y) = x(x-1)^2 y^2 - {\lambda_{\infty}}^2 x - {\lambda_1}^2  + {\lambda_{\infty}}^2 = 0,
\end{equation}
which has a parametrization
\begin{equation}
\label{eq:confluente:xandy:1}
\left\{
\begin{array}{rcl}
x(z)  &=& \dfrac{{\lambda_1}^2 - {\lambda_{\infty}}^2}{z^2 - {\lambda_{\infty}}^2},
\\[2ex]
y(z) &=& - \dfrac{z (z^2 - {\lambda_{\infty}}^2)}{z^2 - {\lambda_1}^2}.
\end{array}
\right.
\end{equation}
Here we impose 
\begin{equation}
\lambda_1, \lambda_{\infty} \neq 0,  
\quad \text{and} \quad 
\lambda_{\infty} \pm \lambda_1 \neq 0
\end{equation}
for $\underline{\lambda} = (\lambda_1, \lambda_\infty)$
so that our assumptions are satisfied.
For this curve, we have
$R = \{0, \infty\} ~(=R^{\ast})$ and $\bar{z} = -z$.
First few correlation functions and free energies are
given explicitly as follows:
\begin{align*}
W_{0, 3}(z_1, z_2, z_3)
&=
- \frac{{\lambda_{\infty}}^2{\lambda_1}^2 }
{2({\lambda_1}^2 - {\lambda_{\infty}}^2)} 
\frac{dz_1 dz_2 dz_3}{{z_1}^2 {z_2}^2 {z_3}^2}, \\
 W_{1,1}(z_1) &=
\frac{1}{16 (\lambda_{\infty}^2 - \lambda_1^2)}
\frac{({z_1}^2 - {\lambda_{\infty}}^2) ({z_1}^2 - {\lambda_1}^2)}{{z_1}^4} dz_1, 
\end{align*}
\begin{align*}
F_0(\underline{\lambda}) & = 
\lambda_\infty^2 \log\left( 
\frac{\lambda_\infty^2 - \lambda_1^2}{4\lambda_\infty^2} \right)
+ 2\lambda_\infty \lambda_1 \log\left( 
\frac{\lambda_\infty + \lambda_1}{\lambda_\infty - \lambda_1} \right)
+ \lambda_1^2 \log\left( 
\frac{\lambda_\infty^2 - \lambda_1^2}{4\lambda_1^2} \right), \\
F_1(\underline{\lambda}) & = 
- \frac{1}{12} \log \left( 
\frac{(\lambda_\infty^2 - \lambda_1^2)^2}
{\lambda_\infty \lambda_1}
\right), \quad
F_2(\underline{\lambda}) = 
\frac{\lambda_\infty^6 - 17 \lambda_\infty^4 \lambda_1^2 
- 17 \lambda_\infty^2 \lambda_4^2 + \lambda_1^6}
{960 \lambda_\infty^2 \lambda_1^2 (\lambda_\infty^2 - \lambda_1^2)^2}.
\end{align*}
Theorem \ref{thm:WKB-Wg,n} implies that, 
for the choice 
\begin{equation}
D(z; \underline{\nu}) = [z] 
- \nu_{1 +}[\beta_{1+}] 
- \nu_{1 -}[\beta_{1-}]
- \nu_{\infty +}[\beta_{\infty+}]
- \nu_{\infty -}[\beta_{\infty-}]
\end{equation}
(where $\beta_{1\pm}=\pm \lambda_1$, 
$\beta_{\infty\pm} = \mp \lambda_\infty$, 
and the parameters satisfy 
$\nu_{1+} + \nu_{1-} + \nu_{\infty+} + \nu_{\infty-} =1$) 
of the divisor, we obtain the following quantum curve:
\begin{equation}
\label{eq:confluente:quantum}
\left( \hbar^2 \frac{d^2}{dx^2} + q(x,\hbar) \hbar \frac{d}{dx}
	+ r(x,\hbar) \right) \psi = 0
\end{equation}
with
\begin{align*}
q(x,\hbar) &= \hbar \left(\frac{1}{x} 
+ \frac{1 - \nu_{1+} - \nu_{1-}}{x-1} \right), \\
r(x,\hbar) & = 
% r_0
- \frac{{\lambda_{\infty}}^2 x + {\lambda_1}^2 
- {\lambda_{\infty}}^2}{x(x-1)^2} 
+ \hbar \left( 
% r_1
\frac{ (\nu_{1+} - \nu_{1-}) \lambda_1}{x(x-1)^2}
+\frac{ (\nu_{\infty+} - \nu_{\infty-}) \lambda_{\infty}}{x(x-1)} 
\right) \\ 
&  \quad
+ \hbar^2 \left(
% r_2 
\frac{\nu_{1+}\nu_{1-}}{x(x-1)^2}
+ \frac{\nu_{\infty+} \nu_{\infty-}}{x(x-1)} 
\right).
\end{align*}
This equation is a special case (i.e. $\lambda_0=0$, $\nu_{0\pm}=0$) 
of the quantum Gauss curve \eqref{eq:Gauss_eq(d/dx)}, 
and hence, we call \eqref{eq:confluente:quantum} 
the degenerate Gauss equation.
We distinguish these equations because the geometry of the 
spectral curves (or WKB-theoretic structure) are different. 

%%%%%%%%%%%%%%%%%%%%%%%%%%%%%%%%%%%%%%%%%%%%%%%%%%%%%%%%%%%%%%%%%%%%%%%
\subsubsection{Quantum Kummer curve}
\label{subsection:quantum-Kummer}

Let us consider the Kummer curve defined by
\begin{equation}
\label{eq:Kummer_P(x,y)}
P(x, y) = 4x^2 y^2 - 
\left( x^2 + 4 \lambda_{\infty} x + 4 {\lambda_0}^2 \right) = 0 
\qquad (\lambda_0 \ne 0, \lambda_0 \pm \lambda_\infty \ne 0).
\end{equation}
A rational parameterization of this curve is 
\begin{equation}
\label{eq:Kummer_parameterization}
\begin{cases}
	\displaystyle
	x = x(z) = \sqrt{{\lambda_{\infty}}^2 - {\lambda_0}^2} 
	\left( z + \frac{1}{z} \right) - 2 \lambda_{\infty} 
			= \frac{ \sqrt{{\lambda_{\infty}}^2 - {\lambda_0}^2} \, 
			(z - \beta_{0 +})(z - \beta_{0 -})}{z}, \\[10pt]
	\displaystyle
	y = y(z) = \frac{z^2 - 1}{2(z - \beta_{0 +})(z - \beta_{0 -})} ,
\end{cases}
\end{equation}
with $\beta_{0 \pm} = (\lambda_{\infty} \pm \lambda_0) / 
\sqrt{{\lambda_{\infty}}^2 - {\lambda_0}^2}$. 
We also set $\beta_{\infty+}=0$ and $\beta_{\infty-}=\infty$
for the consistency of our presentation.
The set of ramification points is given by $R = \{1, -1\} ~(=R^{\ast})$, 
and $\overline{z} = 1/z$.
 
Let $W_{g, n}$ and $F_g$ be the correlation functions 
and the free energies computed from 
\eqref{eq:Kummer_parameterization} respectively. 
Few of them are computed as
\begin{align*}
W_{0, 3}(z_1, z_2, z_3) 
&= - \frac{dz_1 \, dz_2 \,dz_3}
{2 \sqrt{{\lambda_{\infty}}^2 - {\lambda_0}^2}} 
\left\{ \frac{\beta_{0 +} + \beta_{0 -} + 2}
{(z_1 + 1)^2 (z_2 + 1)^2 (z_3 + 1)^2} 
- \frac{\beta_{0 +} + \beta_{0 -} - 2}
{(z_1 - 1)^2 (z_2 - 1)^2 (z_3 - 1)^2} \right\},
 \\
W_{1, 1}(z) 
&= - \frac{z^2(z - \beta_{0 +})(z - \beta_{0 -})}
{\sqrt{{\lambda_{\infty}}^2 - {\lambda_0}^2} (z^2 - 1)^4} \, dz,  
\end{align*}
\begin{align*}
F_0(\lambda_0, \lambda_{\infty}) 
&= \frac{(\lambda_{\infty} + \lambda_0)^2}{2} 
\log{(\lambda_{\infty} + \lambda_0)} 
+ \frac{(\lambda_{\infty} - \lambda_0)^2}{2} 
\log{(\lambda_{\infty} - \lambda_0)}  \\
&\quad - 2 {\lambda_0}^2 \log{(2 \lambda_0)} 
- \frac{3}{2} ({\lambda_{\infty}}^2 - {\lambda_0}^2), \nonumber \\
F_1(\lambda_0, \lambda_{\infty}) 
&= - \frac{1}{12} \log{\left( \frac{{\lambda_{\infty}}^2 
- {\lambda_0}^2}{\lambda_0} \right)}, 
\quad F_2(\lambda_0, \lambda_{\infty}) 
= \frac{{\lambda_{\infty}}^4 - 10 
{\lambda_{\infty}}^2 {\lambda_0}^2 - 7 {\lambda_0}^4}
{960 {\lambda_0}^2 ({\lambda_{\infty}}^2 - {\lambda_0}^2)^2}.
\end{align*}
Let us describe the quantization of (\ref{eq:Kummer_P(x,y)}). 
For the divisor 
\begin{equation}
\label{eq:Kummer_D}
	D(z; \underline{\nu})
	= [z] 
	- \nu_{\infty+} [\beta_{\infty+}] 
	- \nu_{\infty-} [\beta_{\infty-}] 
	- \nu_{0+} [\beta_{0 +}] - \nu_{0-} [\beta_{0 -}]
\end{equation}
(where the parameters satisfy 
$\nu_{\infty+} + \nu_{\infty-} + \nu_{0+} + \nu_{0-} = 1$),
Theorem \ref{thm:WKB-Wg,n} shows that 
the formal series 
$\psi(x;\underline{\nu}, \hbar) = \varphi (z(x); \underline{\nu}, \hbar)$ 
(cf. \eqref{eq:wave-z}) is a WKB solution of the following equation 
(quantum Kummer curve)
\begin{equation}
\label{eq:Kummer_eq(d/dx)}
\left( \hbar^2 \frac{d^2}{dx^2} + q(x,\hbar) \hbar \frac{d}{dx}
	+ r(x,\hbar) \right) \psi = 0
\end{equation}
with
\begin{align*}
q(x,\hbar) &= \hbar \, \frac{\nu_{\infty+} + \nu_{\infty-}}{x}, \\
r(x,\hbar) & = 
% r_0
- \frac{x^2 + 4{\lambda_\infty} x + {\lambda_{0}}^2}{4x^2} 
+ \hbar \left( 
% r_1
\frac{\nu_{\infty+} - \nu_{\infty-}}{2x} 
 + \frac{(\nu_{0+} - \nu_{0-}) \lambda_0}{x^2}
\right) 
+ \hbar^2 \, 
% r_2 
\frac{\nu_{0+} \nu_{0-}}{x^2}.
\end{align*}
This equation is equivalent to 
the Kummer differential equation 
(i.e., the confluent hypergeometric equation) 
via a certain gauge transformation.
Note also that the quantum curve \eqref{eq:Kummer_eq(d/dx)} 
is closely related to the quantum differential equation associated 
with the degree 1 hypersurface in ${\mathbb C}{\mathbb P}^1$. 
A realization of the equation as a quantum curve 
has already done in \cite[\S4.1]{FIMS}.

%%%%%%%%%%%%%%%%%%%%%%%%%%%%%%%%%%%%%%%%%%%%%%%%%%%%%%%%%%%%%%%%%%%%%%%
\subsubsection{Quantum Legendre curve}
\label{subsection:quantum-Legendre}

This subsection is devoted to study the Legendre curve
defined by
\begin{equation}
\label{eq:legendre:classic}
P(x, y):= (x^2-1)y^2 - \lambda_\infty^2 = 0
\qquad (\lambda_\infty \neq 0).
\end{equation}
A rational parametrization of this curve is
\begin{equation}
\left\{
\begin{array}{rcl}
x(z)  &=& \dfrac{1}{2} \left(z + \dfrac{1}{z}\right),
\\[+.7em]
y(z) &=& \dfrac{2\lambda_\infty z}{z^2 - 1}.
\end{array}
\right.
\end{equation}
For this curve, we have
$R = \{1, -1\} ~(=R^{\ast})$,
and the conjugate map $\bar{z} = 1/z$. 
Examples of correlation functions and free energies are:
\begin{align*}
W_{1,1}(z) &= - \frac{z}{2\lambda_\infty (z^2-1)^2} dz,
\quad
W_{0, 3}(z_1, z_2, z_3) = 0,\\
W_{1, 2}(z_1, z_2) &=
\frac{{z_1}^2 {z_2}^2 + {z_1}^2 + {z_2}^2 + 4 z_1 z_2 + 1}
{4\lambda_\infty^2 ({z_1}^2-1)^2 ({z_2}^2 - 1)^2} dz_1 dz_2,
\quad
W_{2,1}(z)
= - \frac{{z}^5 + 7 {z}^3 + z} {8\lambda_\infty^3 (z^2- 1)^4} dz,
\end{align*}
\begin{align*}
 F_0 &= - \lambda_\infty^2 \log 2,
\quad
 F_1 = - \frac{1}{4} \log \lambda_\infty,
\quad
 F_2 = -\frac{1}{64 \lambda_\infty^2},
\quad
 F_3 = - \frac{1}{256 \lambda_\infty^4}.
\end{align*}
Set $\beta_{\infty+}=0$, $\beta_{\infty-}=\infty$
and choose a divisor 
\begin{equation}
D(z; \underline{\nu}) = [z] 
- \nu_{\infty+} [\beta_{\infty+}] 
- \nu_{\infty-} [\beta_{\infty-}]
\qquad (\nu_{\infty+} + \nu_{\infty-} = 1).
\end{equation}
Then, the associated quantum curve by Theorem \ref{thm:WKB-Wg,n} 
is given by
\begin{equation}
\label{eq:legendere:quantum}
\left\{
\hbar^2 \frac{d^2}{dx^2} + \hbar^2 \frac{2 x}{x^2-1} \frac{d}{dx}
- \frac{\lambda_\infty^2}{x^2-1}
+ \hbar \frac{\lambda_\infty (\nu_{\infty+} - \nu_{\infty-})}{x^2-1}
+ \hbar^2 \frac{\nu_{\infty+} \nu_{\infty-}}{x^2-1}
\right\} \psi = 0.
\end{equation}
This equation \eqref{eq:legendere:quantum} has two simple-pole type
turning points at $x = \pm 1$ which correspond 
to the ramification points $z = \pm 1$ of the spectral curve.

%%%%%%%%%%%%%%%%%%%%%%%%%%%%%%%%%%%%%%%%%%%%%%%%%%%%%%%%%%%%%%%%%%%%%%%
\subsubsection{Quantum Bessel curve}
\label{subsection:quantum-Bessel}

Let us consider the Bessel curve defined by
\begin{equation}
\label{Bessel_P(x,y)}
	P(x, y) = y^2 - \frac{x  + 4\lambda_0^2}{4x^2}= 0 
	\qquad (\lambda_0 \ne 0).
\end{equation}
A rational parameterization of this curve is given by
\begin{equation}
\label{eq:Bessel_parameterization}
\begin{cases}
	\displaystyle
	x = x(z) = 4\lambda_0^2 (z^2 - 1), \\[5pt]
	\displaystyle
	y = y(z) = \frac{z}{4\lambda_0 (z^2 - 1)}.
\end{cases}
\end{equation}
For this curve, we have $R = \{0, \infty \}$. 
Note that $\infty \in R$ is ineffective, and hence 
$R^\ast = \{0 \}$ (cf.\,\cite[Proposition 2.7]{IKT-part1}).
The conjugate map is given by $\overline{z} = -z$. 
Few of $W_{g,n}$ and $F_g$ are computed as
\begin{eqnarray*}
W_{0,3}(z_1,z_2,z_3) & = & \frac{dz_1 dz_2 dz_3}
{2\lambda_0  z_1^2 z_2^2 z_3^2},
\quad
W_{1,1}(z) = -\frac{z^2 -1}{16\lambda_0 z^4} \, dz,\\
W_{1,2}(z_1,z_2) & = &
\frac{z_1^4 z_2^4 - 6 z_1^4 z_2^2 - 6 z_1^2 z_2^4
+ 5z_1^4 + 3 z_1^2 z_2^2 + 5 z_2^4}{32\lambda_0^2 z_1^6 z_2^6} 
\, dz_1 dz_2, \\
W_{2,1}(z) & = & 
- \frac{9z^6-107z^4+203z^2-105}{1024\lambda_0^3 z^{10}} \, dz, 
\end{eqnarray*}
\begin{equation*}
F_0 = 3 \lambda_0^2
+ \lambda_0^2  \log\left( - \frac{1}{16 \lambda_0^2} \right), \quad
F_1  =  - \frac{1}{24} \log\left( - \frac{1}{16 \lambda_0^2} \right), \quad
F_2 = \frac{1}{960 \lambda_0^2}, \quad
F_3 = - \frac{1}{16128 \lambda_0^4}.
\end{equation*}
Let us take a divisor
\begin{equation}
\label{eq:Bessel_D}
D(z; \underline{\nu}) = [z] 
- \nu_{0+}[\beta_{0+}] 
- \nu_{0-}[\beta_{0-}] 
- \nu_{\infty} [\infty] \qquad
(\nu_{0+} + \nu_{0-} + \nu_{\infty} = 1),
\end{equation}
where $\beta_{0\pm} = \pm 1$.  
Then, the quantization of the Bessel curve 
via Theorem \ref{thm:WKB-Wg,n} is given by
\begin{equation}
\label{eq:Bessel_eq}
	\left\{ \hbar^2 \frac{d^2}{dx^2}
	+ \hbar^2 \frac{1-\nu_{0+}-\nu_{0-}}{x} \frac{d}{dx}
	- \frac{x+4\lambda_0^2}{4x^2}
	+ \hbar \frac{\lambda_0(\nu_{0+}-\nu_{0-})}{x^2}
	+ \hbar^2 \frac{\nu_{0+} \nu_{0-}}{x^2} \right\} \psi = 0.
\end{equation}
This equation has a regular singular point at $x=0$ and
an irregular singular point at $x=\infty$.
We can verify that the equation \eqref{eq:Bessel_eq}
is equivalent to the Bessel equation
via a certain change of variables
and a gauge transform.
Note also that the special case $\nu_{0+} = \nu_{0-} = 0$ 
of the equation has already constructed as 
a quantum curve in \cite[\S4.1]{FIMS}; 
this example appears as the quantum differential equation 
for the equivariant ${\mathbb C}{\mathbb P}^1$.

%%%%%%%%%%%%%%%%%%%%%%%%%%%%%%%%%%%%%%%%%%%%%%%%%%%%%%%%%%%%%%%%%%%%%%%
\subsubsection{Quantum Whittaker curve}
\label{subsection:quantum-Whittaker}

In this subsection, we study the Whittaker curve
defined by
\begin{equation}
\label{eq:whittaker:classic}
P(x, y):= xy^2 - \frac{1}{4} x + \lambda_\infty = 0
\qquad (\lambda_\infty \neq 0).
\end{equation}
We parametrize this curve by
\begin{equation}
\label{eq:whittaker:xandy:1}
\left\{
\begin{array}{rcl}
x(z)  &=& - \dfrac{4\lambda_\infty}{z^2- 1} = 
2\lambda_\infty \left(\dfrac{1}{z + 1} - \dfrac{1}{z-1}\right),
\\
y(z) &=& \dfrac{1}{2} z.
\end{array}
\right.
\end{equation}
We find that $R = \{0, \infty\} ~(=R^{\ast})$ and $\bar{z} = - z$.
First few correlation functions and free energies are given as follows.
\begin{align*}
 W_{1,1}(z) &= 
 -\frac{(z^2 - 1)^2 }{32 \lambda_\infty {z}^4} dz,
\quad
W_{0, 3}(z_1, z_2, z_3) = -\frac{dz_1 dz_2 dz_3}
{4 \lambda_\infty  {z_1}^2{z_2}^2{z_3}^2}, 
\\
W_{1, 2}(z_1, z_2) &=
\frac{5{z_1}^4 + 3 {z_1}^2 {z_2}^2 + 5 {z_2}^4
-12 {z_1}^2 {z_2}^4 -12 {z_1}^4 {z_2}^2 + 10 {z_1}^4 {z_2}^4 
+ {z_1}^6 {z_2}^6}{128 \lambda_\infty^2 {z_1}^6 {z_2}^6} dz_1 dz_2,
\\
W_{2, 1} (z) &=
\frac{-9 {z}^{12}+22 {z}^{10}-103 {z}^8+372
{z}^6-583 {z}^4+406{z}^2-105}{8192 \lambda_\infty ^3 {z}^{10}}dz,
\end{align*}
\begin{align*}
F_0 &= -\frac{3}{2} \lambda_\infty^2 
+ \lambda_\infty^2 \log (4\lambda_\infty), \quad
F_1 = - \frac{1}{6}\log \lambda_\infty, \quad
F_2 = - \frac{1}{120 \lambda_\infty^2}.
\end{align*}
To quantize the Whittaker curve, we use a divisor
\begin{equation}
D(z; \underline{\nu}) := [z] 
- \nu_{\infty+} [\beta_{\infty+}] 
- \nu_{\infty-} [\beta_{\infty-}]
\qquad (\nu_{\infty+} + \nu_{\infty-} = 1),
\end{equation}
where $\beta_{\infty\pm}=\pm1$.
The quantum curve is given by (cf.\,Theorem \ref{thm:WKB-Wg,n})
\begin{equation}
\label{eq:whittaker:quantum}
\left\{
\hbar^2 \frac{d^2}{dx^2}
+ \hbar^2 \frac{1}{x} \frac{d}{dx}
- \frac{x - 4\lambda_\infty}{4x} 
-\hbar \frac{\nu_{\infty+} - \nu_{\infty-}}{2x}
\right\} \psi = 0.
\end{equation}

%%%%%%%%%%%%%%%%%%%%%%%%%%%%%%%%%%%%%%%%%%%%%%%%%%%%%%%%%%%%%%%%%%%%%%%
\subsubsection{Quantum Weber curve}
\label{subsection:quantum-Weber}

Computations presented in this subsection 
have been done in \cite{Takei17} and \cite{IKT-part1}. 
We discuss the Weber curve 
\begin{equation}
\label{Weber_P(x,y)}
P(x, y) := y^2 - \frac{x^2}{4} + \lambda_\infty = 0
\quad (\lambda_\infty \neq 0)
\end{equation}
with a parametrization
\begin{equation}
\label{Weber_parameterization}
\begin{cases}
\displaystyle
x = x(z) = \sqrt{\lambda_\infty} \left(z + \frac{1}{z}\right),\\
\displaystyle
y = y(z) = \frac{\sqrt{\lambda_\infty}}{2} \left(z - \frac{1}{z}\right).
\end{cases}
\end{equation}
The ramification points are given by $R = \{+1, -1 \} ~(=R^{\ast})$,
and the conjugation map becomes $\overline{z} = 1/z$.
Correlation functions and free energies for lower 
$g, n$ are given as follows.
\begin{align*}
W_{0, 3}(z_1, z_2, z_3)
&= \frac{1}{2 \lambda_\infty} \left\{ 
\frac{1}{(z_1 + 1)^2 (z_2 + 1)^2 (z_3 + 1)^2}
- \frac{1}{(z_1 - 1)^2 (z_2 - 1)^2 (z_3 - 1)^2} \right\} 
dz_1 \, dz_2 \,dz_3, \notag\\
W_{1, 1}(z) &= - \frac{z^3}{\lambda_\infty (z^2 - 1)^4} \, dz ,
\qquad
W_{2, 1}(z) = - \frac{21(z^{11}+ 3 z^9 + z^7)}
{\lambda_\infty^3 (z^2 - 1)^{10}} dz,
\end{align*}
\begin{align*}
F_0(\lambda) = - \frac{3}{4} \lambda_\infty^2
 + \frac{1}{2} \lambda_\infty^2 \log{\lambda_\infty},
\quad F_1(\lambda_\infty) = - \frac{1}{12} \log{\lambda_\infty},
\quad F_2(\lambda) = - \frac{1}{240 \lambda_\infty^2}.
\end{align*}
We set $\beta_{\infty+} = \infty$ and $\beta_{\infty-} = 0$, and
choose
\begin{equation} \label{Weber_D}
D(z; \underline{\nu}) := [z] 
- \nu_{\infty+} [\beta_{\infty+}] 
- \nu_{\infty-} [\beta_{\infty-}]
\qquad (\nu_{\infty+} + \nu_{\infty-} = 1)
\end{equation}
as the divisor for the quantization. 
The quantum curve for \eqref{Weber_P(x,y)} 
constructed by Theorem \ref{thm:WKB-Wg,n} becomes
\begin{equation}
\label{Weber_eq}
\left\{\hbar^2 \frac{d^2}{dx^2} -
\left( \frac{x^2}{4} - {\lambda_\infty} \right) 
- \hbar \frac{\nu_{\infty+} - \nu_{\infty-}}{2}
\right\}  \psi = 0.
\end{equation}

%%%%%%%%%%%%%%%%%%%%%%%%%%%%%%%%%%%%%%%%%%%%%%%%%%%%%%%%%%%%%%%%%%%%%%%
\subsubsection{Quantum degenerate Bessel curve}
\label{subsection:quantum-d-Bessel}

We discuss the degenerate Bessel curve defined by
\begin{equation}
\label{d-Bessel_P(x,y)}
P(x, y) := y^2 - \frac{1}{x} = 0
\end{equation}
with a parametrization
\begin{equation}
\label{d-Bessel_parameterization}
\begin{cases}
\displaystyle
x = x(z) = z^2,\\
\displaystyle
y = y(z) = \frac{1}{z}.
\end{cases}
\end{equation}
The ramification points are given by $R = \{0, \infty \}$, 
but $\infty$ is ineffective; i.e., $R^{\ast} = \{ 0\}$.
The conjugation map is $\overline{z} = - z$.
We choose
$D(z; \underline{\nu}) := [z] - [\infty]$ 
as the divisor for the quantization. 
The quantum curve constructed by Theorem \ref{thm:WKB-Wg,n} becomes
\begin{equation}
\label{degenerate-Bessel_eq}
\left\{\hbar^2 \frac{d^2}{dx^2} 
+ \hbar^2 \frac{1}{x} \frac{d}{dx}
- \frac{1}{x} \right\}  \psi = 0.
\end{equation}
The quantum curve result for this spectral curve 
was established by \cite[Theorem 8]{DN16}, 
where the curve \eqref{d-Bessel_P(x,y)} is called the Bessel curve.  
Also, this example appears as the quantum differential equation 
for the (non-equivariant) ${\mathbb C}{\mathbb P}^1$, 
as discussed in \cite{FIMS}.

%%%%%%%%%%%%%%%%%%%%%%%%%%%%%%%%%%%%%%%%%%%%%%%%%%%%%%%%%%%%%%%%%%%%%%%
\subsubsection{Quantum Airy curve}
\label{subsection:quantum-Airy}

As the last example, let us consider the Airy curve defined by
\begin{equation}
\label{Airy_P(x,y)}
P(x, y) := y^2 - x = 0 
\end{equation}
with a parametrization
\begin{equation}
\label{d-Bessel_parameterization}
\begin{cases}
\displaystyle
x = x(z) = z^2,\\
\displaystyle
y = y(z) = z.
\end{cases}
\end{equation}
For this spectral curve, we have
$R = \{0, \infty\}$ and $\overline{z} = -z$.
We choose the divisor $D(z) = [z] - [\infty]$ and 
the associated quantum curve of
the Airy curve is given in \cite{Zhou12} by 
\begin{equation}
\left\{ \hbar^2 \frac{d^2}{dx^2} - x \right\} \psi = 0.
\end{equation}

\begin{table}[t]
\begin{center}
\begin{tabular}{cc}\hline
 & $Q(x, \underline{\lambda}, \underline{\nu}; \hbar)$
\\\hline\hline
%%%%%%%%%%%%%%%%%%%%%%%%%%%%%%%%%%%%%%%%
\parbox[c][3.5em][c]{0em}{}
Gauss (\S\ref{subsection:quantum-Gauss})
&
\begin{minipage}{.6\textwidth}
\begin{center}
$
\dfrac{{\hat{\lambda}_\infty}^2 x^2 - 
({\hat{\lambda}_\infty}^2 + {\hat{\lambda}_0}^2 - {\hat{\lambda}_1}^2)x 
+ {\hat{\lambda}_0}^2}{x^2 (x-1)^2}
- \hbar^2 \dfrac{x^2 -  x + 1}{4 x^2(x-1)^2}$
\end{center}
\end{minipage}
\\\hline
%%%%%%%%%%%%%%%%%%%%%%%%%%%%%%%%%%%%%%%%
\parbox[c][3.5em][c]{0em}{}
Degenerate Gauss
(\S\ref{subsection:quantum-d-Gauss})
&
\begin{minipage}{.6\textwidth}
\begin{center}
$\dfrac{{\hat{\lambda}_\infty}^2 x + {\hat{\lambda}_1}^2 - {\hat{\lambda}_\infty}^2}{x(x-1)^2} - \hbar^2\dfrac{x^2-x+1}{4 x^2(x-1)^2}$\\
\end{center}
\end{minipage}
\\\hline
%%%%%%%%%%%%%%%%%%%%%%%%%%%%%%%%%%%%%%%%
\parbox[c][3.5em][c]{0em}{}
Kummer (\S\ref{subsection:quantum-Kummer})
&
\begin{minipage}{.6\textwidth}
\begin{center}
$\dfrac{x^2 + 4 \hat{\lambda}_\infty x + 4 {\hat{\lambda}_0}^2}{4x^2} - \dfrac{\hbar^2}{4x^2}$\\
\end{center}
\end{minipage}
\\\hline
%%%%%%%%%%%%%%%%%%%%%%%%%%%%%%%%%%%%%%%%
\parbox[c][3.53em][c]{0em}{}
Legendre (\S\ref{subsection:quantum-Legendre})
&
$\dfrac{{\hat{\lambda}_\infty}^2}{x^2-1} 
- \hbar^2\dfrac{x^2 + 3}{4 (x^2-1)^2}$
\\\hline
%%%%%%%%%%%%%%%%%%%%%%%%%%%%%%%%%%%%%%%%
\parbox[c][3.5em][c]{0em}{}
Bessel (\S\ref{subsection:quantum-Bessel})
&
$\dfrac{x + {\hat{\lambda}_0}^2}{4x^2} - \dfrac{\hbar^2}{4x^2}$
\\\hline
%%%%%%%%%%%%%%%%%%%%%%%%%%%%%%%%%%%%%%%%
\parbox[c][3.5em][c]{0em}{}
Whittaker (\S\ref{subsection:quantum-Whittaker})
& $\dfrac{x - 4 \hat{\lambda}_\infty}{4x} - \dfrac{\hbar^2}{4x^2}$
\\\hline
%%%%%%%%%%%%%%%%%%%%%%%%%%%%%%%%%%%%%%%%
\parbox[c][3.5em][c]{0em}{}
Weber (\S\ref{subsection:quantum-Weber})
& $\dfrac{1}{4}x^2 - \hat{\lambda}_\infty$
\\\hline
%%%%%%%%%%%%%%%%%%%%%%%%%%%%%%%%%%%%%%%%
\parbox[c][3.5em][c]{0em}{}
Degenerate Bessel (\S\ref{subsection:quantum-d-Bessel})
& $\dfrac{1}{x} - \dfrac{\hbar^2}{4x^2}$
\\\hline
%%%%%%%%%%%%%%%%%%%%%%%%%%%%%%%%%%%%%%%%
\parbox[c][3.5em][c]{0em}{}
Airy (\S\ref{subsection:quantum-Airy})
& $x$
\\\hline
%%%%%%%%%%%%%%%%%%%%%%%%%%%%%%%%%%%%%%%%
\end{tabular}
\caption{
SL-forms
$\{(\hbar d/dx)^2 - Q(x, \underline{\lambda}, \underline{\nu}; \hbar)\}
\varphi = 0$
of quantum curves of spectral curves.
In this table $\hat{\lambda}_j = \lambda_j - \hbar\nu_j/2$ 
and $\nu_{j} = \nu_{j+} - \nu_{j-}$ ($j \in  \{0, 1,\infty \}$).
For all cases,
the leading term of the potential is given by
$Q_{\text{cl}}(x)$.}
\label{table:quantum-curve}
\end{center}
\end{table}

\begin{rem}
It is more convenient for the exact WKB analysis
if the second order linear differential equation in question is
represented in the so-called SL-form, i.e.,
the second order linear differential equation with 
no first order term. 
Any second order linear differential equation of the form 
\eqref{eq:Gauss_eq(d/dx)} 
can be reduced to an SL-form
\begin{equation}
\label{eq:2nd-ODE:SL}
\left\{\hbar^2 \frac{d^2}{dx^2}
- Q(x, \hbar) \right\}\varphi = 0, 
\quad
Q(x, \hbar)
 := \frac{1}{4} q(x, \hbar)^2 - r(x, \hbar)
+ \frac{\hbar}{2} \frac{\partial}{\partial x} q(x, \hbar)
\end{equation}
by the gauge transformation
$\varphi = \exp \left(
\frac{1}{2\hbar}
\int^x q(x, \hbar) dx\right) \psi$
of the unknown function.
Table \ref{table:quantum-curve} shows the SL-form of 
quantum curves discussed in this subsection.
\end{rem}

%%%%%%%%%%%%%%%%%%%%%%%%%%%%%%%%%%%%%%%%%%%%%%%%%%%%%%%%%%%%%%%%%%%%%%%
%%%%%%%%%%%%%%%%%%%%%%%%%%%%%%%%%%%%%%%%%%%%%%%%%%%%%%%%%%%%%%%%%%%%%%%
%%%%%%%%%%%%%%%%%%%%%%%%%%%%%%%%%%%%%%%%%%%%%%%%%%%%%%%%%%%%%%%%%%%%%%%
\section{Free energies and Voros coefficients for 
the confluent family of hypergeometric quantum curves}
\label{sec:Voros-vs-TR}

%%%%%%%%%%%%%%%%%%%%%%%%%%%%%%%%%%%%%%%%%%%%%%%%%%%%%%%%%%%%%%%%%%%%%%%
\subsection{Definition of Voros coefficients}
\label{section:def-Voros}

Here we briefly recall the notion of Voros coefficients 
(see \cite{DDP93, Takei08, IN14} etc.\,for more details). 

Let 
\begin{equation}
\psi(x,\hbar) = \exp\left( \sum_{m = -1}^{\infty} 
\hbar^m \int^x S_m(x) \, dx \right)
\end{equation}
be the WKB solution (cf. \cite[\S2]{KT98}) 
of a second order equation
\begin{equation}
\label{eq:2nd-ODE-2}
\left\{
\hbar^2 \frac{d^2}{dx^2} + q(x, \hbar) \hbar \frac{d}{dx} + r(x, \hbar)
\right\}\psi = 0.
\end{equation}
Here we assume that the functions 
$q(x,\hbar) = q_0(x) + \cdots$ and 
$r(x,\hbar) = r_0(x) + \cdots$ in \eqref{eq:2nd-ODE-2}
are polynomials in $\hbar$ with coefficients which are rational in $x$ 
(thus the all quantum curves appeared in
\S \ref{section:example-quantum-curves} are of the above form). 
It is known that the logarithmic derivative 
\begin{equation}
S(x,\hbar) = \frac{d}{dx} \log \psi(x,\hbar) 
= \sum_{m=-1}^\infty \hbar^m S_m(x)
\end{equation}
of the WKB solution is a formal series (which is divergent in general) 
whose coefficients $S_m(x)$ are meromorphic on the 
algebraic curve $\Sigma$ defined by $y^2 + q_0(x) y + r_0(x) = 0$
(i.e., classical limit of the equation \eqref{eq:2nd-ODE-2}).
Let $b_1, b_2 \in \Sigma$ be preimages of singular points 
of the equation \eqref{eq:2nd-ODE-2} by the natural $2:1$ projection 
$\pi: \Sigma \to {\mathbb P}^1$, and take a path $\gamma$ 
on $\Sigma$ from $b_1$ to $b_2$ 
(for example, we may choose $\pi(b_1), \pi(b_2) \in \{0,1,\infty \}$ 
in the case of the quantum Gauss curve \eqref{eq:Gauss_eq(d/dx)}). 
Then, the Voros coefficient of \eqref{eq:2nd-ODE-2} for the path $\gamma$
is defined by 
\begin{equation} \label{eq:def-Voros-coeff}
V_\gamma(\hbar) := \int_{\gamma} \left( S(x,\hbar) 
- \hbar^{-1} S_{-1}(x) - S_0(x) \right) \, dx 
= \sum_{m = 1}^{\infty} \hbar^m \int_{\gamma} S_m(x)\,dx.
\end{equation}

Although the Voros coefficients are divergent series of $\hbar$ 
in general, they are Borel summable in a generic situation
and enjoy interesting Stokes phenomena 
related to cluster transformation (see \cite{DDP93, IN14}). 
Moreover, it is known that the global behavior 
(monodoromy or connection matrices)  of solutions of 
\eqref{eq:2nd-ODE-2} is described by Borel resummed Voros coefficients
(\cite[\S3]{KT98}). Thus the Voros coefficients are crucially 
important in the theory of linear differential equations. 

In this paper, however, we are not going to discuss those 
analytic properties of Voros coefficients. 
We focus on the properties of coefficients of the 
Voros coefficients. In particular, we will investigate 
the relationship between the correlation functions or 
free energies of the topological recursion and the Voros coefficients
through the framework of quantum curves.
Precise statements are given in the next subsection.

%%%%%%%%%%%%%%%%%%%%%%%%%%%%%%%%%%%%%%%%%%%%%%%%%%%%%%%%%%%%%%%%%%%%%%%
\subsection{Main Theorem}
\label{subsec:main-theorem}

In this subsection we formulate our main results
for the quantum curves discussed in
\S \ref{section:example-quantum-curves}. 
Our theorem allows us to express the Voros coefficients 
of the quantum curves by the free energy with 
a certain parameter shift
(cf. Theorem \ref{thm:main-theorem-in-part2} (i)).  
As a corollary, we can find 
three term relations satisfied by the free energy 
(cf. Theorem \ref{thm:main-theorem-in-part2} (ii)),
together with explicit formulas 
for the coefficients of the free energy and 
the Voros coefficients 
(as well as Theorem \ref{thm:main-theorem-in-part2} (iii) and (iv)).
We will only give the proof of our results 
for the (quantum) Gauss curve considered in 
\S \ref{subsection:quantum-Gauss} 
since the other examples can be treated in a similar manner.

Here we formulate our main results more precisely. 

Let $(C)$ be one of the spectral curves constructed 
in \S \ref{section:example-quantum-curves};
that is, 
Gauss (\S \ref{subsection:quantum-Gauss}), 
degenerate Gauss (\S \ref{subsection:quantum-d-Gauss}), 
Kummer (\S \ref{subsection:quantum-Kummer}), 
Legendre (\S \ref{subsection:quantum-Legendre}), 
Bessel (\S \ref{subsection:quantum-Bessel}), 
Whittaker (\S \ref{subsection:quantum-Whittaker}), 
or Weber (\S \ref{subsection:quantum-Weber}) curves.
Denote by $(E)$ the associated quantum curve by 
Theorem \ref{thm:WKB-Wg,n}. We will not consider 
the degenerate Bessel (\S \ref{subsection:quantum-d-Bessel})  
and Airy (\S \ref{subsection:quantum-Airy}) 
in this section because they have only trivial Voros coefficients 
(i.e., no non-trivial relative homology class on $\Sigma$), 
and accordingly, the free energy $F_g$ vanishes for $g \ge 2$.
Recall that the spectral curve $(C)$ is (a parametrization of) 
the classical limit $\Sigma$ of the quantum curve $(E)$.

Let $S$ be the set of singular points of $(E)$, 
which are not branch points of the spectral curve $(C)$.
The set $S$ is tabulated in 
Table \ref{table:singular-points-of-quantum-curves}. 
For each $j \in S$, there exist two preimages 
$\beta_{j+}$ and $\beta_{j-}$ of $j$ by 
the map $x(z)$ which appeared in the definition 
of the corresponding spectral curves 
(note that this notation is consistent 
with those given in 
\S \ref{subsection:quantum-Gauss} -- 
\S \ref{subsection:quantum-Airy}). 
Let $\gamma_j$ be a path on the Riemann surface $\Sigma$
defined as the image by $x(z)$ of a path from 
$\beta_{j-}$ to $\beta_{j+}$ on $z$-plane. 
(On $x$-plane, $\gamma_j$ starts from the point $j$ 
and come back to $j$ but with a different sheet of 
the Riemann surface $\Sigma$.)
Then, we define the Voros coefficient $V^{(j)}$ of 
the quantum curve $(E)$ for the path $\gamma_j$ by
\begin{equation} \label{eq:Voros-coeff-E} 
V^{(j)}(\underline{\lambda}, \underline{\nu}; \hbar)
= \int_{\gamma_j} \Bigl( S(x,\hbar) 
- \hbar^{-1} S_{-1}(x) - S_0(x) \Bigr) dx 
\quad (j \in S).
\end{equation}
Here $S(x,\hbar)$ is the logarithmic derivative of the WKB solution 
of the quantum curve $(E)$. 
Actually, $V^{(j)}$ depends only on the end and initial points 
of $\gamma_j$ since $S_m(x) dx$ for $m \ge 1$ 
(or the correlation functions $W_{g,n}$ for $2g-2+n \ge 1$) 
has no residue at branch points.
We also note that, since the SL-form of the quantum curves  
in Table \ref{table:quantum-curve} depends only on 
the difference $\nu_j$ of the parameter $\nu_{j\pm}$, 
so do the Voros coefficients. 
Thus we have used the symbol 
$\underline{\lambda} = (\lambda_{j})_{j \in S}$, 
$\underline{\nu} = (\nu_{j})_{j \in S}$
for the tuple of parameters contained in 
the quantum curve $(E)$ in the expression \eqref{eq:Voros-coeff-E}.

\begin{table}[t]
\begin{center}
{\renewcommand{\arraystretch}{1.3}
\begin{tabular}{|c||c|c|c|c|c|c|c|c|c|} \hline
    $(E)$ 
    & Gauss 
    & Degenerate Gauss 
    & Kummer
    & Legendre
    & Bessel
    & Whittaker
    & Weber
    % & d-Bessel
    % & Airy
    \\ \hline
    %%%%%%
    $S$ 
    & $\{0,1,\infty \}$ % Gauss 
    & $\{1,\infty \}$ % Degenerate Gauss
    & $\{0,\infty \}$ % Kummer  
    & $\{\infty \}$ % Legendre
    & $\{0\}$ % Bessel
    & $\{\infty \}$ %Whittaker
    & $\{\infty \}$ % Weber
    % & d-Bessel
    % & Airy
    \\ \hline
\end{tabular}}
\end{center} \vspace{-.7em}
\caption{The list of singular points 
(which are not branch points of the spectral curve) 
of quantum curves. 
Note that, some of regular singular point 
(e.g., $x=0$ for the quantum degenerate Gauss curve)
are eliminated from the above list. 
This is because those points are
``turnings point of simple-pole type" 
in the sense of \cite{Ko2}, 
which play similar roles as turning points in the WKB analysis. 
Hence, we will not associate the Voros coefficient for those points.}
\label{table:singular-points-of-quantum-curves}
\end{table}

On the other hand, the topological recursion associates 
the free energy 
\begin{equation} \label{eq:total-free-energy-C}
F(\underline{\lambda}; \hbar)
= \sum_{g = 0}^{\infty} \hbar^{2g - 2} F_g(\underline{\lambda})
\end{equation}
for the spectral curve $(C)$ as we have reviewed in 
\S \ref{section:free-energy-part-2}.
Our main result gives a formula which relates 
the free energy of $(C)$ and the Voros coefficients of $(E)$. 
We derive three-terms relations satisfied by the free energy, 
and moreover, obtain explicit expressions of the 
free energies and Voros coefficients as a corollary of 
these relations. The precise statement is formulated as follows.

%%%%% Main Theorem %%%%%
\begin{thm} \label{thm:main-theorem-in-part2}
\begin{itemize}
\item[{\rm (i)}] 
The Voros coefficient of the quantum curve $(E)$
and the free energy of the spectral curve $(C)$ are related as follows.
\begin{equation} \label{eq:V-and-F-general}
V^{(j)}(\underline{\lambda}, \underline{\nu}; \hbar)
= F\left(\underline{\hat{\lambda}} 
+ \frac{\hbar}{2} \delta_j; \hbar\right)
- F\left(\underline{\hat{\lambda}} 
- \frac{\hbar}{2} \delta_j; \hbar\right)
-\frac{1}{\hbar}\frac{\partial F_0}{\partial\lambda_j}
+ \frac{1}{2} \sum_{k \in S} 
\nu_k \frac{\partial^2 F_0}{\partial\lambda_j \lambda_k} 
\quad (j \in S).
\end{equation}
Here the symbol $\underline{\hat{\lambda}} + \alpha \delta_j$
means the parameter shift of $\hat{\lambda}_j$ by $\alpha$:
\begin{equation}
\underline{\hat{\lambda}} + \alpha \delta_j 
:= (\hat{\lambda}_k + \delta_{kj} \alpha)_{k \in S}. 
\end{equation}
(Recall $\hat{\lambda}_j = \lambda_j - \hbar \nu_j/2$ 
with $\nu_j = \nu_{j+} - \nu_{j-}$ ($j=0,1,\infty$) 
as we have introduced in \eqref{eq:lambda-hat-Gauss}.)

\item[{\rm (ii)}] 
The free energy for the spectral curve $(C)$ 
satisfies the following three-term difference equation.
\begin{align} \label{eq:three-term-general}
F(\underline{\lambda} + \hbar \delta_j; \hbar)
- 2 F(\underline{\lambda}; \hbar)
+ F(\underline{\lambda} - \hbar \delta_j; \hbar)
= 
\log \Lambda(\underline{\lambda}) + R_j(\underline{\lambda}; \hbar)
\quad (j \in S).
\end{align}
Here $\Lambda(\underline{\lambda})$ and $R_j(\underline{\lambda}; \hbar)$ in 
the right hand-side of \eqref{eq:three-term-general} 
are given as follows:

\begin{itemize}
\item[$\bullet$] 
For Gauss curve (\S \ref{subsection:quantum-Gauss}):
\[
\Lambda(\underline{\lambda}) = 
(\lambda_0+\lambda_1+\lambda_\infty)(\lambda_0+\lambda_1-\lambda_\infty)
(\lambda_0-\lambda_1+\lambda_\infty)(\lambda_0-\lambda_1-\lambda_\infty),
\]
\[
R_j(\underline{\lambda};\hbar) = 
- 2\log(2\lambda_j) - \log(2\lambda_j+\hbar) - \log(2\lambda_j - \hbar)
\qquad (j \in \{0,1,\infty \}).
\]

\item[$\bullet$]
For degenerate Gauss curve (\S \ref{subsection:quantum-d-Gauss}):
\[
\Lambda(\underline{\lambda}) = 
(\lambda_1+\lambda_\infty)^2(\lambda_1-\lambda_\infty)^2,
\]
\[
R_j(\underline{\lambda};\hbar) = 
- 2\log(2\lambda_j) - \log(2\lambda_j+\hbar) - \log(2\lambda_j - \hbar)
\qquad (j \in \{1,\infty \}).
\]

\item[$\bullet$]
For Kummer curve (\S \ref{subsection:quantum-Kummer}): 
\[
\Lambda(\underline{\lambda}) = 
(\lambda_\infty+\lambda_0)(\lambda_\infty-\lambda_0),
\]
\[
R_j(\underline{\lambda};\hbar) = 
\begin{cases}
\displaystyle 
- 2\log(2\lambda_0) - \log(2\lambda_0+\hbar) - \log(2\lambda_0 - \hbar)
& ~~(j = 0), \\[+.5em]
0 & ~~(j=\infty).
\end{cases} 
\]

\item[$\bullet$]
For Legendre curve (\S \ref{subsection:quantum-Legendre}): 
\[
\Lambda(\lambda_\infty) = 4 \lambda_\infty^4, \quad
R_\infty(\lambda_\infty;\hbar) = 
-2\log(2\lambda_\infty) - \log(2\lambda_\infty+\hbar) - \log(2\lambda_\infty - \hbar).
\]

\item[$\bullet$] 
For Bessel curve (\S \ref{subsection:quantum-Bessel}):
\[
\Lambda(\lambda_0) = \dfrac{1}{16}, \quad 
R_0(\lambda_0;\hbar) = 
- 2\log(2\lambda_0) - \log(2\lambda_0+\hbar) - \log(2\lambda_0 - \hbar).
\]

\item[$\bullet$] 
For Whittaker curve (\S \ref{subsection:quantum-Whittaker}): 
\[
\Lambda(\lambda_\infty) = \lambda_\infty^2,
\quad
R_{\infty}(\lambda_\infty;\hbar) = 0.
\]

\item[$\bullet$]
For Weber curve (\S \ref{subsection:quantum-Weber}):
\[
\Lambda(\lambda_\infty) = \lambda_\infty, 
\quad 
R_\infty(\lambda_\infty;\hbar) = 0.
\]

\end{itemize}

\item[{\rm (iii)}]
For $g \geq 2$, the $g$-th free energy of 
the spectral curve $(C)$
has the following expression.
\begin{itemize}
\item[$\bullet$] For Gauss curve (\S \ref{subsection:quantum-Gauss}):
\end{itemize} \vspace{-1.3em}
\begin{multline*}
F_{g}(\underline{\lambda}) = 
\frac{B_{2g}}{2g(2g-2)}
\left\{
\frac{1}{(\lambda_0 + \lambda_1 + \lambda_{\infty})^{2g-2}}
+ \frac{1}{(\lambda_0 - \lambda_1 + \lambda_{\infty})^{2g-2}}
+ \frac{1}{(\lambda_0 + \lambda_1 - \lambda_{\infty})^{2g-2}}
\right.
\\
\left.
+ \frac{1}{(\lambda_0 - \lambda_1 - \lambda_{\infty})^{2g-2}}
- \frac{1}{(2\lambda_0)^{2g-2}}
- \frac{1}{(2\lambda_1)^{2g-2}}
- \frac{1}{(2\lambda_{\infty})^{2g-2}}
\right\}. 
\end{multline*}
($F_0$ and $F_1$ for Gauss curve are given in \S \ref{subsection:quantum-Gauss}.)

\begin{itemize}
\item[$\bullet$] 
For degenerate Gauss curve (\S \ref{subsection:quantum-d-Gauss}):
\end{itemize} 
\[
F_g(\underline{\lambda})
= \dfrac{B_{2g}}{2g (2g-2)}
\left\{
\dfrac{2}{(\lambda_1 + \lambda_\infty)^{2g-2}}
+ \dfrac{2}{(\lambda_1 - \lambda_\infty)^{2g-2}}
- \dfrac{1}{(2 \lambda_1)^{2g-2}} 
- \dfrac{1}{(2 \lambda_\infty)^{2g-2}}
\right\}.
\]
($F_0$ and $F_1$ for degenerate Gauss curve are given in 
\S \ref{subsection:quantum-d-Gauss}.)

\begin{itemize}
\item[$\bullet$] 
For Kummer curve (\S \ref{subsection:quantum-Kummer}): 
\end{itemize}
\[
F_g(\underline{\lambda}) 
= \frac{B_{2g}}{2g(2g - 2)} 
 \left\{ \frac{1}{(\lambda_0+\lambda_{\infty})^{2g - 2}} 
      + \frac{1}{(\lambda_0-\lambda_{\infty} )^{2g - 2}} 
     - \frac{1}{(2 \lambda_0)^{2g - 2}} \right\}.
\]
($F_0$ and $F_1$ for Kummer curve are given in 
\S \ref{subsection:quantum-Kummer}.)

\begin{itemize}
\item[$\bullet$] 
For Legendre curve (\S \ref{subsection:quantum-Legendre}): 
\end{itemize}
\[
F_g(\lambda_\infty) 
= \frac{B_{2g}}{2g(2g-2)} \left\{ \frac{4}{\lambda_\infty^{2g-2}} 
 - \frac{1}{(2 \lambda_\infty)^{2g-2}} \right\}.
\]
($F_0$ and $F_1$ for Legendre curve are given in 
\S \ref{subsection:quantum-Legendre}.)

\begin{itemize}
\item[$\bullet$] 
For Bessel curve (\S \ref{subsection:quantum-Bessel}):
\end{itemize}
\[
F_g(\lambda_0) = 
- \frac{B_{2g}}{2g(2g-2)} \frac{1}{(2\lambda_0)^{2g - 2}}. 
\]
($F_0$ and $F_1$ for Bessel curve are given in 
\S \ref{subsection:quantum-Bessel}. 
The expression was obtained in \cite{IM}.)

\begin{itemize}
\item[$\bullet$] 
For Whittaker curve (\S \ref{subsection:quantum-Whittaker}): 
\end{itemize}
\[
F_g(\lambda_\infty) = 
\frac{B_{2g}}{2g(2g-2)} \frac{2}{\lambda_\infty^{2g - 2}}. 
\]
($F_0$ and $F_1$ for Whittaker curve are given in 
\S \ref{subsection:quantum-Whittaker}.)

\begin{itemize}
\item[$\bullet$] 
For Weber curve (\S \ref{subsection:quantum-Weber}):
\end{itemize}
\[
F_g(\lambda_\infty) = \dfrac{B_{2g}}{2g(2g - 2)} 
\dfrac{1}{\lambda_\infty^{2g-2}}.
\]
($F_0$ and $F_1$ for Weber curve are given in 
\S \ref{subsection:quantum-Weber}. 
The expression was obtained in \cite{HZ}; 
see also \cite{IM}.)

\bigskip
In these expressions, $B_{m}$ is the $m$-th Bernoulli number
given by \eqref{def:Bernoulli} in \S \ref{sec:bernoulli}.

\item[{\rm (iv)}]
The Voros coefficients \eqref{eq:Voros-coeff-E} 
for the quantum curve $(E)$ are explicitly given by 
\begin{equation}
V^{(j)}(\underline{\lambda}, \underline{\nu}; \hbar)
= \sum_{m = 1}^{\infty} \hbar^{m} 
V^{(j)}_{m}(\underline{\lambda}, \underline{\nu})
\quad (j \in S),
\end{equation}
where the coefficients 
$V^{(j)}_m(\underline{\lambda}, \underline{\nu})$
for $m \ge 1$ are given as follows.

\begin{itemize}
\item[$\bullet$]  
For quantum Gauss curve (\S \ref{subsection:quantum-Gauss}):
\end{itemize} \vspace{-1.em}
\begin{align*}
& V^{(0)}_m(\underline{\lambda}, \underline{\nu}) 
 = V^{(0)}_m(\lambda_0, \lambda_1, \lambda_\infty, 
\nu_0, \nu_1, \nu_\infty) \\
& = 
\frac{1}{m (m + 1)}
\left\{
\frac{B_{m + 1} ((\nu_0 + \nu_1 + \nu_{\infty} + 1)/2)}
{(\lambda_0 + \lambda_1 + \lambda_{\infty})^m}
+ \frac{B_{m + 1} ((\nu_0 - \nu_1 + \nu_{\infty} + 1)/2)}
{(\lambda_0 - \lambda_1 + \lambda_{\infty})^m}
\right. \nonumber \\ 
&
+ \frac{B_{m + 1} ((\nu_0 + \nu_1 - \nu_{\infty} + 1)/2)}
{(\lambda_0 + \lambda_1 - \lambda_{\infty})^m}
+ \frac{B_{m + 1} ((\nu_0 - \nu_1 - \nu_{\infty} + 1)/2)}
{(\lambda_0 - \lambda_1 - \lambda_{\infty})^m} 
\left.
- \frac{B_{m + 1}(\nu_0) + B_{m + 1}(\nu_0 + 1)}{(2\lambda_0)^m} 
\right\}
\end{align*}
gives the formula for $j=0$. The other Voros coefficients 
(for $j=1$ and $\infty$) are obtained from 
the above formula by permuting the parameters 
$\underline{\lambda}$ and $\underline{\nu}$: 
\[
V^{(1)}_m(\underline{\lambda}, \underline{\nu}) 
 =
V^{(0)}_m(\lambda_1, \lambda_0, \lambda_\infty, 
\nu_1, \nu_0, \nu_\infty), \qquad
V^{(\infty)}_m(\underline{\lambda}, \underline{\nu}) 
 = 
V^{(0)}_m(\lambda_\infty, \lambda_1, \lambda_0, 
\nu_\infty, \nu_1, \nu_0). 
\]
(These expressions were also obtained in \cite{ATT2}.)
% \cite[Theorem 1.1]{ATT2}

\begin{itemize}
\item[$\bullet$]  
For quantum degenerate Gauss curve (\S \ref{subsection:quantum-d-Gauss}):
\end{itemize}
\begin{align*}
& V^{(1)}_m(\underline{\lambda}, \underline{\nu}) 
= V^{(1)}_m(\lambda_1, \lambda_\infty, \nu_1, \nu_\infty) \\
& = \frac{1}{m(m+1)}\left\{
\frac{2 B_{m + 1} ((\nu_1 + \nu_{\infty} + 1)/2)}
{(\lambda_1 + \lambda_{\infty})^m}
+ \frac{2 B_{m + 1} ((\nu_1 - \nu_\infty + 1)/2)}
{(\lambda_1 - \lambda_\infty)^m}
\right. \nonumber \\ 
 & \qquad
 \left.
- \frac{B_{m + 1}(\nu_1) + B_{m + 1}(\nu_1 + 1)}
{(2\lambda_1)^m} 
\right\}
\end{align*}
gives the formula for $j=1$. 
The other Voros coefficient for $\infty$ is 
obtained from the above formula by permuting the parameters:
\[
V^{(\infty)}_m(\lambda_1, \lambda_\infty, \nu_1, \nu_\infty)
=  
V^{(1)}(\lambda_\infty, \lambda_1, \nu_\infty, \nu_1).
\]

\begin{itemize}
\item[$\bullet$]  
For quantum Kummer curve (\S \ref{subsection:quantum-Kummer}): 
\end{itemize}
\begin{align*}
		V^{(0)}_m(\underline{\lambda}, \underline{\nu}) 
		&= \frac{1}{m(m+1)} 
			\bigg\{ \frac{B_{m+1}((\nu_{0} + \nu_{\infty} + 1)/2)}
			{(\lambda_0 + \lambda_{\infty})^{m}} 
		+ \frac{B_{m+1}((\nu_{0} - \nu_{\infty} + 1)/2)}
		{(\lambda_0 - \lambda_{\infty})^{m}} \\
		&\qquad 
		- \frac{B_{m+1}(\nu_{0}) + B_{m+1}(\nu_{0}+1)}
		{{(2 \lambda_0)}^{m}} \bigg\}, \notag \\
		\label{eq:Kummer_Voros(concrete-form)(infty)}
		V^{(\infty)}_m(\underline{\lambda}, \underline{\nu}) 
		&=  \frac{1}{m(m+1)} 
			\bigg\{ \frac{B_{m+1}((\nu_{0} + \nu_{\infty} + 1)/2)}
			{(\lambda_0 + \lambda_{\infty})^{m}} 
			- \frac{B_{m+1}((\nu_{0} - \nu_{\infty} + 1)/2)}
			{(\lambda_0 - \lambda_{\infty})^{m}} \bigg\}. 
\end{align*}
(These expressions were also obtained in \cite{ATT}.)
% \cite[Theorem 2.2]{ATT}

\begin{itemize}
\item[$\bullet$]  
For quantum Legendre curve (\S \ref{subsection:quantum-Legendre}): 
\end{itemize}
\[
V^{(\infty)}_m(\lambda_\infty, \nu_\infty) =  
\frac{1}{m(m+1)} \left\{  
\frac{4B_{m+1}((\nu_\infty + 1)/2)}{\lambda_\infty^m} 
- \frac{B_{m+1}(\nu_\infty) + B_{m+1}(\nu_\infty + 1)}{(2\lambda_\infty)^{m}}
\right\}.
\]
(This expression was also obtained in \cite{Ko3}.)

\begin{itemize}
\item[$\bullet$]  
For quantum Bessel curve (\S \ref{subsection:quantum-Bessel}):
\end{itemize}
\[
V^{(0)}_m(\lambda_0, \nu_0) =  
- \frac{B_{m+1}(\nu_0) + B_{m+1}(\nu_0 + 1)}{m(m+1) (2\lambda_0)^{m}}. 
\]
(The special case $\nu_0 = 0$ of the expression 
was obtained in \cite{AIT}.)
% \cite[Theorem 2.5]{AIT}

\begin{itemize}
\item[$\bullet$]  
For quantum Whittaker curve (\S \ref{subsection:quantum-Whittaker}): 
\end{itemize}
\[
V^{(\infty)}_m(\lambda_\infty, \nu_\infty) =  
\frac{2 B_{m+1} ( (\nu_\infty+1)/2)}{m(m+1)\lambda_{\infty}^{m}}.
\]
(This expression was obtained in \cite{KoT11}.)
% \cite[Theorem 2.1]{KoT11}

\begin{itemize}
\item[$\bullet$]  
For quantum Weber curve (\S \ref{subsection:quantum-Weber}):
\end{itemize}
\[
V^{(\infty)}_m(\lambda_\infty, \nu_\infty) = 
\frac{B_{m + 1}\big( (\nu_\infty+1)/2 \big)}{m (m + 1) \lambda_\infty^{m}}. 
\]
(This expression was obtained in \cite{Takei17} and
\cite{IKT-part1};  % \cite[Theorem ??]{IKT-part1} 
see also \cite{Takei08} for the special case $\nu_\infty=0$.)

\bigskip
In these expressions, $B_{m}(t)$ is 
the $m$-th Bernoulli polynomial
given by \eqref{def:BernoulliPoly} in \S \ref{sec:bernoulli}. 

\end{itemize}
\end{thm}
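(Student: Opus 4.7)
The plan is to focus on the Gauss curve, as suggested in the paper, and to address parts (i)--(iv) in order, with part (i) being the main technical input.

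For part (i), the plan is to combine the quantum curve theorem (Theorem \ref{thm:WKB-Wg,n}) with the special-geometry-type variational formula for the topological recursion. Theorem \ref{thm:WKB-Wg,n} expresses the WKB solution $\psi(x;\underline{\nu},\hbar) = \varphi(z(x);\underline{\nu},\hbar)$ as the exponentiated generating series \eqref{eq:wave-z} of $W_{g,n}$; hence the Voros coefficient can be written as the difference $\log\varphi(\beta_{j+};\underline{\nu},\hbar) - \log\varphi(\beta_{j-};\underline{\nu},\hbar)$ after removal of the $\hbar^{-1}$ and $\hbar^{0}$ parts corresponding to $S_{-1}$ and $S_{0}$. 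Substituting \eqref{eq:wave-z} and expanding, the contribution of $W_{g,n}$ is identified with an iterated derivative $\partial_{\lambda_j}^{n} F_g$ via repeated use of the variational identity
\[
\partial_{\lambda_j} F_g = \int_{\beta_{j-}}^{\beta_{j+}} W_{g,1}, \qquad \partial_{\lambda_j} W_{g,n}(z_1,\dots,z_n) = \int_{\beta_{j-}}^{\beta_{j+}} W_{g,n+1}(z_1,\dots,z_n,\zeta),
\]
which holds because $\lambda_j = \mathrm{Res}_{\beta_{j+}} y\,dx$ is the period of $W_{0,1}$ along the cycle $\gamma_j$ (cf.\,\eqref{eq:temperatures}). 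Summing over $g$ and $n$ reassembles the result into the operator $2\sinh(\hbar\partial_{\hat\lambda_j}/2) F(\hat\lambda) = F(\hat\lambda+\tfrac{\hbar}{2}\delta_j) - F(\hat\lambda-\tfrac{\hbar}{2}\delta_j)$. Expanding this expression in $\hbar$ using $\hat\lambda = \lambda - \tfrac{\hbar}{2}\underline{\nu}$ produces a leading term $\hbar^{-1}\partial_{\lambda_j} F_0$ and a subleading cross-term $-\tfrac{1}{2}\sum_{k} \nu_k\partial^2_{\lambda_j\lambda_k} F_0$; these are precisely the pieces subtracted on the right-hand side of (i), ensuring that $V^{(j)}$ starts at order $\hbar$ as demanded by its definition.

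For part (ii), the plan is to use the contiguity relations collected in Appendix \S\ref{sec:contiguity-relation} to compute the first difference $V^{(j)}(\underline{\lambda}+\tfrac{\hbar}{2}\delta_j;\underline{\nu};\hbar) - V^{(j)}(\underline{\lambda}-\tfrac{\hbar}{2}\delta_j;\underline{\nu};\hbar)$ in closed form and to translate the resulting relation into a statement about $F$ via (i). Contiguity produces a first-order functional relation between the WKB solutions at parameters differing by $\hbar\delta_j$; its WKB-theoretic form gives $V^{(j)}$ at the two shifted parameters explicitly, with right-hand side $\log\Lambda(\underline{\lambda}) + R_j(\underline{\lambda};\hbar)$ as in the statement. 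Substituting (i) at the two shifted parameters into this relation, the boundary terms $\hbar^{-1}\partial_{\lambda_j}F_0$ and $\tfrac{1}{2}\sum_k\nu_k\partial^2_{\lambda_j\lambda_k}F_0$ telescope and cancel, leaving on the left-hand side exactly the discrete Laplacian $F(\underline{\lambda}+\hbar\delta_j;\hbar) - 2F(\underline{\lambda};\hbar) + F(\underline{\lambda}-\hbar\delta_j;\hbar)$.

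For parts (iii) and (iv), the plan is to solve the three-term equation order by order in $\hbar$. Expanding the left-hand side as $2\sum_{k\ge 1}\tfrac{\hbar^{2k}}{(2k)!}\partial_{\lambda_j}^{2k}F$ and the right-hand side via $\log(2\lambda_j\pm\hbar) = \log(2\lambda_j)+\sum_{n\ge 1}\tfrac{(\pm 1)^{n+1}}{n}(\hbar/2\lambda_j)^n$, the Stirling-type generating identity $\sum_{k\ge 1} B_{2k}t^{2k}/(2k)!=t/(e^{t}-1)-1+t/2$ matches the two sides term by term and, together with the permutation symmetry of Remark \ref{rem:symmetry-Gauss} and the known values of $F_0$ and $F_1$ recorded in \S\ref{subsection:quantum-Gauss}, determines $F_g$ uniquely for $g\ge 2$ in the closed form of Table \ref{table:free-energy}. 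Plugging these $F_g$ back into (i) and collapsing the resulting binomial sums via the defining identity $B_m(t)=\sum_{k=0}^{m}\binom{m}{k}B_k t^{m-k}$ for the Bernoulli polynomials, which is activated by the shift $\hat\lambda_j = \lambda_j-\hbar\nu_j/2$, yields the Voros-coefficient formulas in (iv). The main obstacle will be the bookkeeping in part (i), in particular identifying the divisor integrals appearing in \eqref{eq:wave-z} with iterated $\partial_{\lambda_j}$-derivatives of $F_g$ while simultaneously tracking the $\hbar$-corrections coming from $\hat\lambda = \lambda - \tfrac{\hbar}{2}\underline{\nu}$.
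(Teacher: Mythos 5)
Your overall architecture matches the paper's: part (i) via the variational formula $\partial_{\lambda_j}^nF_g=\int\cdots\int W_{g,n}$ applied to the divisor integrals in \eqref{eq:wave-z}, part (ii) via contiguity relations combined with (i), and parts (iii)--(iv) via Bernoulli generating functions. (For (iv) you take a genuinely different and arguably cleaner route: direct substitution of the explicit $F_g$ into \eqref{eq:V-and-F-general} and collapsing the binomial sums into Bernoulli polynomials, rather than the paper's method of solving a further difference equation $(e^{\hbar\partial_{\lambda_0}}-1)V^{(0)}=\cdots$ and inverting it; your route avoids a second uniqueness discussion.) However, there are two concrete problems.

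First, in part (ii) you assert that after substituting (i) at the two shifted parameters ``the boundary terms $\hbar^{-1}\partial_{\lambda_j}F_0$ and $\tfrac12\sum_k\nu_k\partial^2_{\lambda_j\lambda_k}F_0$ telescope and cancel, leaving exactly the discrete Laplacian.'' They do not, and this matters: with the paper's choice of shifts (equivalently $\underline{\nu}=(\pm1,0,0)$) the $\hbar^{-1}\partial_{\lambda_0}F_0$ terms cancel but the second-derivative corrections \emph{add} to $\partial_{\lambda_0}^2F_0=\log\Lambda-4\log(2\lambda_0)$, and this is precisely where the $\log\Lambda(\underline{\lambda})$ on the right-hand side of \eqref{eq:three-term-general} comes from --- the contiguity relation alone only supplies the $\log(2\lambda_0\pm\hbar)$ and $\log(2\lambda_0)$ pieces. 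If the corrections cancelled as you claim, you would derive a three-term relation missing the $\log\Lambda$ term, which is false. (In your exact formulation, shifting $\lambda_j$ by $\pm\hbar/2$ with $\underline{\nu}$ fixed, the situation is worse: the corrections are differences of $\partial_{\lambda_j}F_0$ and $\partial^2F_0$ evaluated at \emph{distinct} arguments $\lambda_j\pm\hbar/2$, which certainly do not vanish and must be computed explicitly.) Second, in part (iii) the three-term relations determine only $\partial_{\lambda_j}^2F_g$ for each $j$, so $F_g$ is fixed only up to an element of the common kernel of the discrete Laplacians, i.e.\ up to polynomials that are affine in each $\lambda_j$ (constants, linear terms, and products such as $\lambda_0\lambda_1\lambda_\infty$). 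Permutation symmetry together with the known $F_0,F_1$ does not remove this ambiguity for $g\ge2$ (a symmetric constant $C_g$ survives both constraints). The paper kills it using the homogeneity $F_g(t\underline{\lambda})=t^{2-2g}F_g(\underline{\lambda})$ from \cite[Theorem 5.3]{EO}, which forces any residual multilinear term to vanish since $2-2g<0$; your plan needs this ingredient. Finally, in part (i) be aware that identifying the divisor integrals with derivatives of $F_g$ also requires the path-deformation identity of Lemma \ref{lem:Gauss_path_deformation} (anti-invariance of $W_{g,n}$ under the involution) to rewrite integrals between points lying over \emph{different} singular fibers as half-sums of the period integrals $\int_{\beta_{k-}}^{\beta_{k+}}$; this is part of the bookkeeping you defer but it is an actual lemma, not mere reindexing.
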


We will prove these statements in the rest of this section.

%%%%%%%%%%%%%%%%%%%%%%%%%%%%%%%%%%%%%%%%%%%%%%%%%%%%%%%%%%%%%%%%%%%%%%%
\subsection{Proof of main theorem (for quantum Gauss curve)}
\label{subsec:proof-gauss}

Here we give a proof of Theorem \ref{thm:main-theorem-in-part2}
for the case of (quantum) Gauss curve.  
Therefore, in the following proof, we focus on 
the Guass curve \eqref{eq:Gauss_parameterization}
and the associated quantum Gauss curve \eqref{eq:Gauss_eq(d/dx)} 
and use the notations in \S \ref{subsection:quantum-Gauss};
namely, the function $x(z)$ is given 
in \eqref{eq:Gauss_parameterization}, and the divisor 
$D(z;\underline{\nu})$ is given in \eqref{eq:Gauss_D}, 
and the set $S$ of singular points becomes $\{0,1, \infty \}$, etc. 
(We omit the proof for other examples 
considered in \S \ref{section:example-quantum-curves} 
because they can be treated in a similar manner.)

%%%%%%%%%%%%%%%%%%%%%%%%%%%%%%%%%%%%%%%%%%%%%%%%%%%%%%%%%%%%%%%%%%%%%%%
\subsubsection{Proof of Theorem \ref{thm:main-theorem-in-part2} (i)}

For the proof, we need 
\begin{lem} \label{lemma:Gauss_variation}
The following relations hold for $2g-2+n \ge 1$:
\begin{equation}
\label{eq:Gauss_variation}
\frac{\partial^n}{\partial \lambda_{j}^n}  F_g
=
\int_{\zeta_1=\beta_{j -}}^{\zeta_1=\beta_{j +}} \cdots 
\int_{\zeta_n=\beta_{j -}}^{\zeta_n=\beta_{j +}} 
W_{g, n}(\zeta_1, \ldots, \zeta_n) 
\qquad (j \in \{0,1,\infty \}).
\end{equation}
\end{lem}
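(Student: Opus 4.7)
The plan is to deduce the lemma from two building blocks: a \emph{one-point formula}
\[
\frac{\partial F_g}{\partial \lambda_j} = \int_{\beta_{j-}}^{\beta_{j+}} W_{g,1}(\zeta) \qquad (g \ge 1),
\]
and a \emph{single-derivative variational formula} at the level of correlation functions, which I shall call $(\star)$:
\[
\frac{\partial W_{g,n}(z_1,\ldots,z_n)}{\partial \lambda_j}
= \int_{\zeta=\beta_{j-}}^{\zeta=\beta_{j+}} W_{g,n+1}(z_1,\ldots,z_n,\zeta)
\qquad (2g-2+n\ge 1),
\]
where $\partial/\partial\lambda_j$ is taken with $z_1,\ldots,z_n$ held fixed as points of $\mathbb{P}^1$. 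The $g\ge 1$ cases of the lemma then follow by iterating $(\star)$ inside the multiple integral of the one-point formula. The remaining cases $g=0$, $n\ge 3$ are obtained from the standard special-geometry identity for $\partial^3 F_0/(\partial\lambda_{j_1}\partial\lambda_{j_2}\partial\lambda_{j_3})$ together with $(\star)$ applied to $W_{0,k}$ for $k\ge 3$.

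I would prove $(\star)$ by induction on $2g-2+n$. The base cases are $(g,n)=(0,3)$ and $(1,1)$, where $(\star)$ is a direct verification from the explicit expressions for $W_{0,3}$ and $W_{1,1}$ given in \S\ref{subsection:quantum-Gauss}, combined with the key identity
\[
\frac{\partial (y(z)\,dx(z))}{\partial\lambda_j}
= dz\left(\frac{1}{z-\beta_{j+}}-\frac{1}{z-\beta_{j-}}\right)
= \int_{\zeta=\beta_{j-}}^{\zeta=\beta_{j+}} B(z,\zeta),
\]
itself established case-by-case in $j\in\{0,1,\infty\}$ by differentiating the parametrization \eqref{eq:Gauss_parameterization} using \eqref{eq:Lambda-Gauss}. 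For the inductive step, I apply $\partial/\partial\lambda_j$ to the topological recursion \eqref{eq:TR} for $W_{g,n+1}$. Since $R=\{\pm 1\}$ and the involution $\bar z=1/z$ do not depend on $\underline{\lambda}$, the derivative commutes with $\Res_{z=r}$ and with conjugation; the only $\lambda_j$-dependence in the kernel $K_r$ enters through the prefactor $1/(y(z)-y(\bar z))$, whose derivative is controlled by the $W_{0,1}$ identity above. Leibniz together with the inductive hypothesis then reassembles the right-hand side into the topological recursion expression for $\int_{\beta_{j-}}^{\beta_{j+}}W_{g,n+2}(\ldots,\zeta)$, which equals $\partial_{\lambda_j}W_{g,n+1}$ by definition.

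For the one-point formula, I differentiate $F_g=(2-2g)^{-1}\sum_{r\in R}\Res_{z=r}[\Phi(z)W_{g,1}(z)]$ with respect to $\lambda_j$ (for $g\ge 2$). Using $d\Phi=W_{0,1}$, the variation $\partial_{\lambda_j}\Phi(z)=\log\bigl((z-\beta_{j-})/(z-\beta_{j+})\bigr)$ is a primitive of $\partial_{\lambda_j}W_{0,1}$ computed above. Substituting this together with $(\star)$ for $W_{g,1}$ and applying the residue theorem on $\mathbb{P}^1$ (moving the residues from $R$ to the logarithmic singularities at $\beta_{j\pm}$) collapses the expression to $\int_{\beta_{j-}}^{\beta_{j+}}W_{g,1}$. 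The exceptional case $g=1$, which is not covered by \eqref{def:Fg2}, is verified directly from the explicit $F_1$ given in \S\ref{subsection:quantum-Gauss}. The main obstacle is that the endpoints $\beta_{j\pm}(\underline{\lambda})$ themselves depend on $\lambda_j$, so iterating $(\star)$ produces boundary contributions $\pm W_{g,n}(\ldots,\beta_{j\pm})\,\partial_{\lambda_j}\beta_{j\pm}$ that must cancel; this relies on showing that $W_{g,n}$ for $2g-2+n\ge 1$ is regular (in fact vanishing) as a 1-form at each $\beta\in B$ in each of its variables, a property that follows from the pole structure dictated by the topological recursion together with the residue conditions built into the integration divisor \eqref{eq:Gauss_D}.
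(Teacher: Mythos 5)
The paper's own proof is two lines: it verifies the single identity
\begin{equation*}
\frac{\partial y(z)}{\partial \lambda_j}\cdot dx(z) - \frac{\partial x(z)}{\partial \lambda_j}\cdot dy(z)
= \int_{\zeta=\beta_{j-}}^{\zeta=\beta_{j+}} B(z,\zeta)
\end{equation*}
and then invokes the variation formula \cite[Theorem 5.1]{EO} as a black box. You instead set out to reprove that variation formula from scratch; that is a legitimate (if much longer) route, but your execution breaks at its foundation. Your ``key identity'' replaces the combination $\partial_{\lambda_j}y\cdot dx - \partial_{\lambda_j}x\cdot dy$ by $\partial_{\lambda_j}\bigl(y(z)\,dx(z)\bigr)$ taken at fixed $z$. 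These differ by the exact form $d\bigl(y(z)\,\partial_{\lambda_j}x(z)\bigr)$, which is \emph{not} zero here because $x(z)$ in the parametrization \eqref{eq:Gauss_parameterization} genuinely depends on $\underline{\lambda}$. Concretely, $y\,dx = \lambda_\infty(z^2-1)^2\,dz\big/\bigl(z\prod_{\beta}(z-\beta)\bigr)$, so differentiating at fixed $z$ moves all four poles $\beta_{0\pm},\beta_{1\pm}$ and produces, e.g., a double pole at $z=\beta_{1+}$ with coefficient $\lambda_1\,\partial_{\lambda_0}\beta_{1+}\neq 0$, whereas $\int_{\beta_{0-}}^{\beta_{0+}}B(z,\cdot)=\bigl(\tfrac{1}{z-\beta_{0+}}-\tfrac{1}{z-\beta_{0-}}\bigr)dz$ is regular there. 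So the identity you propose to verify case by case is false, and the induction built on it cannot close. (Relatedly, the kernel's denominator $(y(z)-y(\bar z))\,dx(z)$ depends on $\lambda_j$ through $dx(z)$ as well, not only through $y(z)-y(\bar z)$, so your description of the $\lambda_j$-dependence of $K_r$ is also incomplete.)

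The second genuine gap is the treatment of the moving endpoints. You assert that the boundary contributions $W_{g,n}(\ldots,\beta_{j\pm})\,\partial_{\lambda_j}\beta_{j\pm}$ cancel because $W_{g,n}$ is ``in fact vanishing'' at each $\beta\in B$; this is false already for $W_{0,3}$, whose explicit expression in \S\ref{subsection:quantum-Gauss} has poles only at $z_i=\pm 1$ and is manifestly nonzero at $z_i=\beta_{0\pm}$. The mechanism that actually makes everything work --- and which the Eynard--Orantin combination $\partial y\,dx-\partial x\,dy$ encodes --- is to take all variations at fixed $x$ rather than fixed $z$: the endpoints of $\gamma_j$ then lie over the fixed point $x=j$ and do not move, no boundary terms arise, and the discrepancy $d(y\,\partial_{\lambda_j}x)$ disappears. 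If you wish to avoid citing \cite[Theorem 5.1]{EO}, you must carry out the induction in that fixed-$x$ framework (or explicitly track and cancel the Lie-derivative correction terms relating the fixed-$z$ and fixed-$x$ variations); as written, both the base identity and the endpoint argument fail.
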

\begin{proof}
Because
\begin{equation}
\frac{\partial y(z)}{\partial \lambda_j} \cdot dx(z)
- \frac{\partial x(z)}{\partial \lambda_j} \cdot dy(z)
= \int^{\zeta = \beta_{j+}}_{\zeta = \beta_{j-}} B(z, \zeta)
\end{equation}
holds for each $j \in \{0,1,\infty \}$, 
the variation formula \cite[Theorem 5.1]{EO} 
implies \eqref{eq:Gauss_variation}. 
\end{proof}
Note that the parameters $\lambda_j$ and points $\beta_{j\pm}$
in the other examples in \S \ref{section:example-quantum-curves}
are chosen so that the variation formula 
\eqref{eq:Gauss_variation} holds. 
Therefore, following proof is applicable to 
the other examples.  

\begin{lem}
\label{lem:Gauss_path_deformation}
Let $\omega(z)$ be a meromorphic differential whose poles 
are only on the set of ramification points of 
$x(z)$ in \eqref{eq:Gauss_parameterization}. 
Suppose also that $\omega(z)$ has no residue at those poles, 
and is anti-invariant under the conjugation map; 
that is, $\omega(\bar{z}) = - \omega(z)$. 
Then, we have
	\begin{equation}
		\int_{\beta_{j\epsilon}}^{\beta_{k\eta}} \omega(z) 
		= - \int_{\beta_{j(-\epsilon)}}^{\beta_{k(-\eta)}}  \omega(z) 
		= \frac{1}{2} \int_{\beta_{j\epsilon}}^{\beta_{j(-\epsilon)}} \omega(z) 
		+ \frac{1}{2}\int_{\beta_{k(-\eta)}}^{\beta_{k\eta}} \omega(z) 
				\label{eq:Gauss_path_deformation} 
\end{equation}
for any $j,k \in \{0,1,\infty \}$ and $\epsilon, \eta \in \{+,- \}$.
\end{lem}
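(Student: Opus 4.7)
The plan is to reduce both equalities in \eqref{eq:Gauss_path_deformation} to a simple functional equation for a meromorphic primitive of $\omega$. Since $\omega$ is a meromorphic 1-form on $\mathbb{P}^1$ whose poles are confined to the two ramification points $R = \{1, -1\}$ and all of whose residues vanish, a standard consequence of the residue theorem on $\mathbb{P}^1$ is that $\omega$ is globally exact: there exists a meromorphic function $F$ on $\mathbb{P}^1$ with $dF = \omega$. Once $F$ is in hand, each integral in \eqref{eq:Gauss_path_deformation} becomes a difference of values of $F$ at the endpoints, so it suffices to derive a relation among the six numbers $F(\beta_{j\pm})$.

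Next I would exploit the anti-invariance. Writing $\iota(z) = \bar z = 1/z$ for the conjugation map, the hypothesis $\iota^*\omega = -\omega$ yields $d(F \circ \iota) = -dF$, hence
\begin{equation}
F(1/z) + F(z) = c
\end{equation}
for some constant $c \in \mathbb{C}$.

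I then need the geometric identity $\overline{\beta_{j\epsilon}} = \beta_{j(-\epsilon)}$ for every $j \in \{0,1,\infty\}$ and $\epsilon \in \{+,-\}$. For $j = \infty$ this is immediate from $\beta_{\infty+} = 0$ and $\beta_{\infty-} = \infty$. For $j \in \{0,1\}$, a direct factorization of $((\lambda_j \pm \lambda_\infty)^2 - \lambda_k^2)$ as a product of the four linear forms $\lambda_0 \pm \lambda_1 \pm \lambda_\infty$ shows that $\beta_{j+}\beta_{j-} = \Lambda/\Lambda = 1$, so $\iota(\beta_{j+}) = \beta_{j-}$. Combining this with the previous step gives $F(\beta_{j+}) + F(\beta_{j-}) = c$ for every $j \in \{0,1,\infty\}$, with a single common constant $c$.

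From here both equalities in \eqref{eq:Gauss_path_deformation} are immediate telescoping computations: writing $I := \int_{\beta_{j\epsilon}}^{\beta_{k\eta}} \omega = F(\beta_{k\eta}) - F(\beta_{j\epsilon})$, the relation $F(\beta_{k\eta}) + F(\beta_{k(-\eta)}) = c = F(\beta_{j\epsilon}) + F(\beta_{j(-\epsilon)})$ gives $\int_{\beta_{j(-\epsilon)}}^{\beta_{k(-\eta)}} \omega = -I$, and averaging the two expressions for $I$ obtained by replacing each endpoint via $F(\beta_{i\sigma}) = c - F(\beta_{i(-\sigma)})$ produces the second equality. The only mild subtlety I anticipate is the justification of the meromorphic primitive $F$ globally on $\mathbb{P}^1$; this is really the only analytic input, and everything else is algebraic manipulation of values of $F$.
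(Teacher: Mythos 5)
Your proof is correct and follows essentially the same route as the paper, whose one-line proof likewise rests on the anti-invariance of $\omega$, the path-independence coming from the vanishing residues, and the relation $\overline{\beta_{j\pm}} = \beta_{j\mp}$ (which you verify correctly via $\beta_{j+}\beta_{j-}=1$). Packaging the argument through a global meromorphic primitive $F$ satisfying $F(z)+F(1/z)=c$ is just an integrated form of the change of variables $z\mapsto \bar z$ that the paper's ``straightforward computation'' refers to.
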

This is proved by a straightforward computation and relations 
$\overline{\beta_{j\pm}}=\beta_{j\mp}$ ($j \in \{0,1, \infty\}$).

\bigskip
Now we derive the equality \eqref{eq:V-and-F-general} 
with the aid of Lemmas \ref{lemma:Gauss_variation} 
and \ref{lem:Gauss_path_deformation} and the results in 
\S \ref{subsection:quantum-Gauss} on the quantization of Gauss curve. 
Let us focus on the Voros coefficient 
$V^{(0)}(\underline{\lambda}, \underline{\nu};\hbar)$ 
for the singular point $j=0$. 
It follows from the definition of the path $\gamma_0$
and Theorem \ref{thm:WKB-Wg,n} that the Voros coefficient 
associated with the singular point $0$ is expressed 
in terms of $W_{g,n}$ as follows.
\begin{align}
V^{(0)}(\underline{\lambda}, \underline{\nu}; \hbar)
&= \int_{\beta_{0-}}^{\beta_{0+}}
\Bigl( S(x(z), \hbar) - \hbar^{-1} S_{-1}(x(z)) 
- S_0(x(z)) \Bigr) \frac{dx}{dz} \, dz \\
&= \sum_{m = 1}^{\infty} \hbar^m \int_{\beta_{0-}}^{\beta_{0+}}
\left\{ \sum_{\substack{2g - 2 + n = m \\ g \geq 0, \, n \geq 1}}
\frac{1}{n!} \frac{d}{dz} \int_{\zeta_1 \in D(z; \underline{\nu})}
\cdots \int_{\zeta_n \in D(z; \underline{\nu})}
W_{g, n}(\zeta_1, \ldots, \zeta_n) \right\} dz
\notag\\
&= \sum_{m = 1}^{\infty} \hbar^m
\sum_{\substack{2g - 2 + n = m \\ g \geq 0, \, n \geq 1}}
\frac{1}{n!}
\left(
\int_{\zeta_1 \in D(\beta_{0+}; \underline{\nu})} \cdots 
\int_{\zeta_n \in D(\beta_{0+}; \underline{\nu})}
\right.
\notag\\
&\qquad\qquad\qquad\qquad\qquad\qquad
\left.
-
\int_{\zeta_1 \in D(\beta_{0-}; \underline{\nu})} \cdots 
\int_{\zeta_n \in D(\beta_{0-}; \underline{\nu})}
\right)
W_{g, n}(\zeta_1, \ldots, \zeta_n).
\notag
\end{align}
For simplicity, let us introduce the notation 
\begin{equation}
\left\{ \int_{\gamma} \right\}^n 
W_{g,n}(\zeta_1, \ldots, \zeta_n)
= \int_{\zeta_1 \in \gamma} \cdots \int_{\zeta_n \in \gamma} 
W_{g,n}(\zeta_1, \ldots, \zeta_n).
\end{equation}
Since 
\begin{equation}
D(\beta_{0\pm}; \underline{\nu}) = 
\nu_{0\mp}([\beta_{0\pm}] - [\beta_{0\mp}]) +
\sum_{\substack{j \in \{1,\infty \} \\ \epsilon \in \{+,-\}}} 
\nu_{j\epsilon} ([\beta_{0\pm}] - [\beta_{j\epsilon}]) 
\end{equation}
(here we have used \eqref{eq:relation-Gauss-nu}), 
Lemmas \ref{lemma:Gauss_variation} 
and \ref{lem:Gauss_path_deformation} imply
\begin{equation}
\left\{\int_{D(\beta_{0\pm}; \underline{\nu})} \right\}^n 
W_{g,n}(\zeta_1,\dots,\zeta_n)   \hspace{+17.em}
\end{equation}
\vspace{-1.em}
\begin{align*}
& = \left\{ \left(\nu_{0\mp} 
+ \sum_{\substack{j \in \{1,\infty \} \\ \epsilon \in \{+,-\}}} 
\frac{\nu_{j\epsilon}}{2} \right) \int_{\beta_{0\mp}}^{\beta_{0\pm}} 
+ \sum_{\substack{j \in \{1,\infty \} \\ \epsilon \in \{+,-\}}} 
\frac{\nu_{j\epsilon}}{2} \int_{\beta_{j\epsilon}}^{\beta_{j(-\epsilon)}}
\right\}^n  W_{g,n}(\zeta_1,\dots,\zeta_n)  \\
& = \left\{ 
\frac{-\nu_0 \pm 1}{2} \int_{\beta_{0-}}^{\beta_{0+}} 
- \sum_{j \in \{1, \infty\}} \frac{\nu_j}{2} \int_{\beta_{j-}}^{\beta_{j+}} 
\right\}^n 
W_{g,n}(\zeta_1,\dots,\zeta_n) \\
& = \left\{ 
\frac{-\nu_0 \pm 1}{2} \frac{\partial}{\partial \lambda_0} 
- \sum_{j \in \{1, \infty\}} \frac{\nu_j}{2} 
\frac{\partial}{\partial \lambda_j} \right\}^n 
F_g(\underline{\lambda}).
\end{align*} 
Here we used \eqref{eq:relation-Gauss-nu} again, 
and set $\nu_j = \nu_{j+} - \nu_{j-}$ ($j \in \{ 0,1,\infty \}$) 
as we have introduced in \eqref{eq:lambda-hat-Gauss}.
Further computation shows
\begin{equation*}
\begin{split}
V^{(0)}(\underline{\lambda}, \underline{\nu}; \hbar) 
%%%
&= \sum_{m = 1}^{\infty} \hbar^m 
\sum_{\substack{2g - 2 + n = m \\ g \geq 0, \, n \geq 1}} 
\frac{1}{n!} \\ 
& 
\times \left[ 
\left\{ 
\frac{-\nu_0 + 1}{2} \frac{\partial}{\partial \lambda_0} 
- \sum_{j \in \{1, \infty\}} \frac{\nu_j}{2} 
\frac{\partial}{\partial \lambda_j} \right\}^n 
 - 
\left\{ 
\frac{-\nu_0 - 1}{2} \frac{\partial}{\partial \lambda_0} 
- \sum_{j \in \{1, \infty\}} \frac{\nu_j}{2} 
\frac{\partial}{\partial \lambda_j} \right\}^n 
 \right]F_g(\underline{\lambda}) \\
%%%
&= 
\sum_{\substack{ g \ge 0, \,\, n_1,n_2,n_3 \ge 0 \\ 2g-2+n_1+n_2+n_3 \ge 1}}
\frac{1}{n_1! n_2! n_3!} \\
& \times \left[ 
\left\{ \left( 
-\frac{\hbar \nu_0}{2} + \frac{\hbar}{2}
\right) \frac{\partial}{\partial \lambda_0} 
\right\}^{n_1} 
\left\{ -\frac{\hbar \nu_1}{2} \frac{\partial}{\partial \lambda_1} \right\}^{n_2} 
\left\{-\frac{\hbar \nu_\infty}{2}  \frac{\partial}{\partial \lambda_\infty} \right\}^{n_3} 
\hbar^{2g-2} F_g(\underline{\lambda}) \right. \\
& \left.
- \left\{ \left( 
-\frac{\hbar \nu_0}{2} - \frac{\hbar}{2}
\right) \frac{\partial}{\partial \lambda_0} 
\right\}^{n_1} 
\left\{ -\frac{\hbar \nu_1}{2} \frac{\partial}{\partial \lambda_1} \right\}^{n_2} 
\left\{-\frac{\hbar \nu_\infty}{2}  \frac{\partial}{\partial \lambda_\infty} \right\}^{n_3} 
\hbar^{2g-2} F_g(\underline{\lambda}) \right]
%%%
\\
&= 
F \left(\lambda_0 - \frac{\hbar\nu_0}{2} + \frac{\hbar}{2}, 
\lambda_1 - \frac{\hbar\nu_1}{2}, \lambda_\infty - \frac{\hbar\nu_\infty}{2}; \hbar \right)
\\ 
&  \quad 
- F \left(\lambda_0 - \frac{\hbar\nu_0}{2} - \frac{\hbar}{2}, 
\lambda_1 - \frac{\hbar\nu_1}{2}, \lambda_\infty - \frac{\hbar\nu_\infty}{2}; \hbar \right)
\\ 
& \qquad
- \frac{1}{\hbar} \frac{\partial F_0}{\partial \lambda_0} 
+ \frac{1}{2} \left( 
\nu_0 \frac{\partial}{\partial \lambda_0} 
+ \nu_1 \frac{\partial}{\partial \lambda_1} 
+ \nu_\infty \frac{\partial}{\partial \lambda_\infty} 
\right) \frac{\partial F_0}{\partial \lambda_0}.
\end{split}
\end{equation*}
Thus we have proved \eqref{eq:V-and-F-general} for $j=0$. 
The other equalities for $j=1$ and $j=\infty$ can be derived similarly.

\begin{rem} \label{remark:zeta-reg}
In the definition \eqref{eq:Voros-coeff-E} of the Voros coefficient, 
we subtracted the first two terms $\hbar^{-1}S_{-1}$ and $S_0$ 
because these terms are singular at end points of the path $\gamma_j$. 
However, a regularization procedure of divergent integral 
(see \cite{Voros-zeta} for example)
allows us to define the regularized Voros coefficient:
\begin{equation}
V^{(j)}_{\rm reg}(\underline{\lambda},\underline{\nu};\hbar) 
= 
\hbar^{-1}V^{(j)}_{-1}(\underline{\lambda},\underline{\nu}) 
+ V^{(j)}_0(\underline{\lambda},\underline{\nu})
+ V^{(j)}(\underline{\lambda},\underline{\nu};\hbar) 
\qquad (j \in \{0,1,\infty\} ), 
\end{equation}
where 
$V^{(j)}_{-1}(\underline{\lambda},\underline{\nu})$ and 
$V^{(j)}_0(\underline{\lambda},\underline{\nu})$ are obtained by solving 
\begin{equation} \label{eq:zeta-regularization-equation}
\frac{\partial^2}{\partial \lambda_j^2} V^{(j)}_{-1} = 
\int_{\gamma_j} \frac{\partial^2}{\partial \lambda_j^2} S_{-1}(x) \, dx, \quad
\frac{\partial}{\partial \lambda_j} V^{(j)}_{0} = 
\int_{\gamma_j} \frac{\partial}{\partial \lambda_j} S_{0}(x) \, dx 
\qquad (j \in \{0,1,\infty \}).
\end{equation}
Actually, we can verify that $\partial_{\lambda_j}^2 S_{-1}(x)$ and 
$\partial_{\lambda_j}S_0(x)$ are holomorphic at $x=j$ although  
$S_{-1}$ and $S_0$ are singular there.
Hence, the equations in \eqref{eq:zeta-regularization-equation} make sense
and we can find $V^{(j)}_{-1}$ and $V^{(j)}_{0}$. 
For example, in the case of quantum Gauss curve, 
the equations \eqref{eq:zeta-regularization-equation} 
for $j=0$ are solved explicitly by 
\begin{align}
V^{(0)}_{-1} 
& = 
(\lambda_0 + \lambda_1 + \lambda_{\infty})\log{(\lambda_0 + \lambda_1 + \lambda_{\infty})} 
(\lambda_0 - \lambda_1 + \lambda_{\infty})\log{(\lambda_0 - \lambda_1 + \lambda_{\infty})} \\
& \quad
+(\lambda_0 + \lambda_1 - \lambda_{\infty})\log{(\lambda_0 + \lambda_1 - \lambda_{\infty})}
(\lambda_0 - \lambda_1 - \lambda_{\infty})\log{(\lambda_0 - \lambda_1 - \lambda_{\infty})} \notag \\
& \quad 
- 4 \lambda_0 \log(2\lambda_0), \notag
\end{align}
\begin{align}
V^{(0)}_{0} &= - \frac{\nu_0}{2} \Bigl( 
\log{(\lambda_0 + \lambda_1 + \lambda_{\infty})} 
+ \log{(\lambda_0 - \lambda_1 + \lambda_{\infty})} 
+ \log{(\lambda_0 + \lambda_1 - \lambda_{\infty})} 
+ \log{(\lambda_0 - \lambda_1 - \lambda_{\infty})} \\
& \hspace{+3.7em} 
- 4 \log{(2 \lambda_0)} \Bigr) \notag \\
& \quad 
- \frac{\nu_1}{2} \Bigl( 
\log{(\lambda_0 + \lambda_1 + \lambda_{\infty})} 
- \log{(\lambda_0 - \lambda_1 + \lambda_{\infty})}
+ \log{(\lambda_0 + \lambda_1 - \lambda_{\infty})}
- \log{(\lambda_0 - \lambda_1 - \lambda_{\infty})} \Bigr) \notag \\
& \quad
- \frac{\nu_\infty}{2} \Bigl(
\log{(\lambda_0 + \lambda_1 + \lambda_{\infty})} 
+ \log{(\lambda_0 - \lambda_1 + \lambda_{\infty})}
- \log{(\lambda_0 + \lambda_1 - \lambda_{\infty})}
- \log{(\lambda_0 - \lambda_1 - \lambda_{\infty})}
\Bigr). \notag
\end{align}
Actually, we can verify that the regularized integrals are realized by
the correction terms 
\begin{equation}
V^{(j)}_{-1} = \frac{\partial F_0}{\partial \lambda_j}, \qquad
%%%
V^{(j)}_{0} = - \frac{1}{2} \left( 
\nu_0 \frac{\partial}{\partial \lambda_0} 
+ \nu_1 \frac{\partial}{\partial \lambda_1} 
+ \nu_\infty \frac{\partial}{\partial \lambda_\infty} 
\right) \frac{\partial F_0}{\partial \lambda_j} 
\qquad (j \in \{0,1,\infty \})
\end{equation}
in the right hand-side of the relation \eqref{eq:V-and-F-general}.
Thus we can conclude that the regularized Voros coefficient satisfies 
\begin{equation}
V^{(j)}_{\rm reg}(\underline{\lambda},\underline{\nu};\hbar) 
= 
F\left(\underline{\hat{\lambda}} 
+ \frac{\hbar}{2} \delta_j; \hbar\right)
- F\left(\underline{\hat{\lambda}} 
- \frac{\hbar}{2} \delta_j; \hbar\right) 
\qquad (j \in \{0,1,\infty \}).
\end{equation}
\end{rem}

%%%%%%%%%%%%%%%%%%%%%%%%%%%%%%%%%%%%%%%%%%%%%%%%%%%%%%%%%%%%%%%%%%%%%%%
\subsubsection{Proof of Theorem \ref{thm:main-theorem-in-part2} (ii)}

Using the contiguity relations for the Gauss hypergeometric equation
(see \S \ref{sec:contiguity-relation}), 
we can find difference equations (with respect to the parameter $\nu_j$) 
satisfied by the Voros coefficients.
\begin{lem}  \label{lem:Gauss_Voros-parameter}
The Voros coefficient $V^{(0)}$ (for the singular point $0$)
of the quantum Gauss curve \eqref{eq:Gauss_eq(d/dx)} 
satisfies the following relations. 
\begin{align}  
\label{eq:Gauss-Voros-contiguity-1}
&V^{(0)}(\underline{\lambda}, \nu_0, \nu_1, \nu_{\infty}; \hbar) 
- V^{(0)}(\underline{\lambda}, \nu_0-1, \nu_1-1, \nu_{\infty}; \hbar) \\
&\qquad 
= - \log \left[
\frac{\lambda_0^2}{\hat{\lambda}_0
(\hat{\lambda}_0 + {\hbar}/{2})} \,
\frac{(\hat{\lambda}_0+\hat{\lambda}_1
+\hat{\lambda}_\infty + {\hbar}/{2})}
{(\lambda_0+\lambda_1+\lambda_\infty)} \,
\frac{(\hat{\lambda}_0+\hat{\lambda}_1
- \hat{\lambda}_\infty + {\hbar}/{2})}
{(\lambda_0+\lambda_1-\lambda_\infty)}
\right], 
\notag \\
%%% 
\label{eq:Gauss-Voros-contiguity-2}
&V^{(0)}(\underline{\lambda}, \nu_0, \nu_1, \nu_{\infty}; \hbar) 
- V^{(0)}(\underline{\lambda},\nu_0, \nu_1 - 1, \nu_{\infty}+1;\hbar)
\\
&\qquad
= 
- \log \left[
\frac{(\hat{\lambda}_0 + \hat{\lambda}_1 
- \hat{\lambda}_\infty + {\hbar}/{2})}
{(\lambda_0 + \lambda_1 - \lambda_\infty)} \,
\frac{(\lambda_0 - \lambda_1 + \lambda_\infty)}
{(\hat{\lambda}_0 - \hat{\lambda}_1 + \hat{\lambda}_\infty  
- {\hbar}/{2})}
\right],
\notag\\
%%%
\label{eq:Gauss-Voros-contiguity-3}
&V^{(0)}(\underline{\lambda},\nu_0, \nu_1, \nu_{\infty};\hbar) 
- V^{(0)}(\underline{\lambda}, \nu_0 - 1, \nu_1 , \nu_{\infty}+1; \hbar)
\\
&\qquad
=
- \log \left[ 
\frac{\lambda_0^2}
{\hat{\lambda}_0(\hat{\lambda}_0+{\hbar}/{2})} \,
\frac{(\hat{\lambda}_0 + \hat{\lambda}_1 
- \hat{\lambda}_\infty + {\hbar}/{2})}
{(\lambda_0 + \lambda_1 -\lambda_\infty)} \,
\frac{(\hat{\lambda}_0 - \hat{\lambda}_1 
- \hat{\lambda}_\infty + {\hbar}/{2})}
{(\lambda_0 - \lambda_1 -\lambda_\infty)} 
\right]. 
\notag 
\end{align}
\end{lem}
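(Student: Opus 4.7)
The plan is to derive the three contiguity-type identities \eqref{eq:Gauss-Voros-contiguity-1}--\eqref{eq:Gauss-Voros-contiguity-3} by combining the explicit gauge equivalence between the quantum Gauss curve \eqref{eq:Gauss_eq(d/dx)} and the standard Gauss hypergeometric equation \eqref{eq:standard-Gauss-operator} with the classical contiguity relations recalled in \S \ref{sec:contiguity-relation}. Recall from \eqref{eq:lambda-hat-Gauss} that $\hat{\lambda}_j = \lambda_j - \hbar \nu_j/2$ (with $\nu_j = \nu_{j+} - \nu_{j-}$) enters the hypergeometric parameters through $\alpha = (\hat{\lambda}_0 + \hat{\lambda}_1 + \hat{\lambda}_\infty)/\hbar + 1/2$, $\beta = (\hat{\lambda}_0 + \hat{\lambda}_1 - \hat{\lambda}_\infty)/\hbar + 1/2$, and $\gamma = 2\hat{\lambda}_0/\hbar + 1$, so the three $\underline{\nu}$-shifts in \eqref{eq:Gauss-Voros-contiguity-1}--\eqref{eq:Gauss-Voros-contiguity-3} correspond respectively to the integer shifts $(\alpha,\beta,\gamma) \mapsto (\alpha+1,\beta+1,\gamma+1)$, $(\alpha,\beta+1,\gamma)$, and $(\alpha,\beta+1,\gamma+1)$.

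Fix the WKB solution $\psi = \psi(x, \underline{\lambda}, \underline{\nu}; \hbar)$ of \eqref{eq:Gauss_eq(d/dx)} built from the topological recursion by \eqref{eq:wave-z}, normalized on the sheet of the spectral curve containing $\beta_{0-}$. Via the gauge transform \eqref{eq:gauge-tramsform-Gauss} it corresponds to a distinguished hypergeometric-type solution $\phi$ characterized by its behavior near $x=0$. The contiguity relations in \S \ref{sec:contiguity-relation}, applied to $\phi$, yield for each of the three shifts an identity of the form $\phi_{\text{shift}}(x) = h_i(x)\, \phi(x)$ where $h_i$ is an elementary function (a rational function times a power of $x$ and $x-1$) with coefficients depending on $\hat{\lambda}_j$ and $\hbar$. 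Back-transforming via \eqref{eq:gauge-tramsform-Gauss} gives $\psi_{\text{shift}} = g_i(x, \underline{\lambda}, \underline{\nu}; \hbar)\, \psi$, and taking the logarithmic derivative in $x$ produces $S_{\text{shift}}(x, \hbar) - S(x, \hbar) = \partial_x \log g_i(x, \hbar)$.

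Next, I integrate this identity along the path $\gamma_0$ from $\beta_{0-}$ to $\beta_{0+}$ on the spectral curve, using the definition \eqref{eq:Voros-coeff-E}. Because $g_i$ is elementary, $\int_{\gamma_0} \partial_x \log g_i \, dx$ reduces to the difference of boundary values of $\log g_i$ at the two lifts of $x = 0$, producing logarithmic terms that coincide, after the cancellation of the $\hbar^{-1}S_{-1}$ and $S_0$ subtractions between the original and shifted parameters, with the rational expressions on the right-hand sides of \eqref{eq:Gauss-Voros-contiguity-1}--\eqref{eq:Gauss-Voros-contiguity-3}. The comparison is facilitated by the regularized Voros coefficients in Remark \ref{remark:zeta-reg}: at the level of $V^{(0)}_{\mathrm{reg}}$ the shift simply acts as $\hat{\lambda}_0 \mapsto \hat{\lambda}_0 + \hbar/2$ etc., and the stated right-hand sides are recovered by subtracting the corresponding shifts of $\hbar^{-1} V^{(0)}_{-1} + V^{(0)}_{0}$.

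The main obstacle will be the endpoint regularization at $\beta_{0\pm}$: the boundary values of $\log g_i$ carry $\hbar^{-1}$-singular contributions from the exponents $\hat{\lambda}_0/\hbar$ in the gauge factor $x^{\hat{\lambda}_0/\hbar + (\nu_{0+}+\nu_{0-})/2}$, along with logarithmically divergent terms, and these must be matched precisely against the shifts of $S_{-1}$ and $S_0$ so that only the finite rational log-expressions stated in the lemma remain. Tracking the simultaneous shifts in $\underline{\hat{\lambda}}$ and $\underline{\nu}$ under, for example, $(\nu_0, \nu_1) \mapsto (\nu_0 - 1, \nu_1 - 1)$ -- and in particular the appearance of the factor $\lambda_0^2/[\hat{\lambda}_0(\hat{\lambda}_0 + \hbar/2)]$ in \eqref{eq:Gauss-Voros-contiguity-1} as the residue of the gauge-induced logarithm at $x = 0$ -- is the principal bookkeeping step. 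Once carried out for \eqref{eq:Gauss-Voros-contiguity-1}, the other two identities follow by the same argument applied to the remaining two contiguity relations.
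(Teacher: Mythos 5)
Your overall strategy (use the contiguity relations of the hypergeometric equation, integrate the resulting identity for the logarithmic derivatives along $\gamma_0$, and read off the answer from boundary values at the two lifts of $x=0$) is the same as the paper's, and your identification of the $\underline{\nu}$-shifts with the integer shifts of $(\alpha,\beta,\gamma)$ is correct. However, there is a genuine gap at the central step: you assert that the contiguity relations give $\phi_{\mathrm{shift}}(x)=h_i(x)\,\phi(x)$ with $h_i$ an \emph{elementary} function. This is false. Contiguous hypergeometric solutions are related by first-order \emph{differential} operators, not by multiplication; e.g.\ the shift $(\alpha,\beta,\gamma)\mapsto(\alpha+1,\beta+1,\gamma+1)$ corresponds to $\psi_{\mathrm{shift}}=\mathrm{const}\times\partial_x\psi$ (the first relation of Theorem \ref{thm:contiguity}). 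Consequently the correct identity is of the form
\begin{equation*}
S_{\mathrm{shift}}(x,\hbar)-S(x,\hbar)=\frac{d}{dx}\log\bigl(a(x)S(x,\hbar)+b(x)\bigr),
\end{equation*}
where the argument of the logarithm involves the full formal series $S(x,\hbar)$, not an elementary function of $x$.

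This matters because the entire nontrivial content of the right-hand sides of \eqref{eq:Gauss-Voros-contiguity-1}--\eqref{eq:Gauss-Voros-contiguity-3} comes from the boundary values of $\log\bigl(a(x)S(x,\hbar)+b(x)\bigr)$ at the two preimages of $x=0$: at the endpoint $0$ one needs the $x$-independent term $A(\hbar)$ of $S_{\mathrm{Gauss}}$ (determined from the indicial data of the equation, cf.\ \eqref{eq:expression-of-A}), which produces the factors $(\hat{\lambda}_0+\hat{\lambda}_1+\hat{\lambda}_\infty+\hbar/2)(\hat{\lambda}_0+\hat{\lambda}_1-\hat{\lambda}_\infty+\hbar/2)/\bigl(\hbar(2\hat{\lambda}_0+\hbar)\bigr)$, while at $0^\dagger$ one needs the leading singular behavior $S\sim -2\hat{\lambda}_0/(\hbar x)$. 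If $h_i$ were elementary as you claim, none of these $\hat{\lambda}$-dependent rational factors could arise, and the formula could not be recovered. To repair the argument you must (a) replace the multiplicative relation by the first-order differential one, (b) compute the asymptotics of $S_{\mathrm{Gauss}}(x,\hbar)$ at both lifts of $x=0$ (including the constant term $A(\hbar)$), and (c) combine these with the explicit difference of the $\int S_0\,dx$ terms between the original and shifted parameters before taking the limit to the endpoints; this is exactly what the paper's proof in \S\ref{subsection:proof-of-contiguity-Voros} does.
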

We will prove Lemma \ref{lem:Gauss_Voros-parameter} in 
\S \ref{subsection:proof-of-contiguity-Voros}. 
Here we derive \eqref{eq:three-term-general} for $j=0$
by using this result. 

In the proof $V^{(0)}(\underline{\lambda}, \underline{\nu};\hbar)$ 
is abbreviated as $V^{(0)}(\nu_0, \nu_1, \nu_\infty)$ 
since we are only interested in dependence on $\underline{\nu}$. 
Substituting $(\nu_0,\nu_1,\nu_\infty) = (1,0,0)$  
into \eqref{eq:Gauss-Voros-contiguity-1},
$(\nu_0,\nu_1,\nu_\infty) = (0,0,-1)$
into \eqref{eq:Gauss-Voros-contiguity-2}, and
$(\nu_0,\nu_1,\nu_\infty) = (0,0,-1)$
into \eqref{eq:Gauss-Voros-contiguity-3}, 
we have
\begin{align}
V^{(0)}(1,0,0) - V^{(0)}(0,-1,0) 
& =  
-  \log \left[ \frac{2\lambda_0}{2\lambda_0-\hbar}
\right],
\\
V^{(0)}(0,0,-1) - V^{(0)}(0,-1,0) 
& = 0,
\\
V^{(0)}(0,0,-1) - V^{(0)}(-1,0,0) 
& =
- \log \left[ \frac{2\lambda_0}{2\lambda_0+\hbar}
\right].
\end{align}
Combining these equalities, we have
\begin{equation} \label{eq:Gauss-Voros-difference-special}
V^{(0)}(1,0,0) - V^{(0)}(-1,0,0) = 
- \log \left[ \frac{4\lambda_0^2}
{(2\lambda_0+\hbar)(2\lambda_0-\hbar)}
\right].
\end{equation}
On the other hand, \eqref{eq:V-and-F-general} 
for $j=0$ with $(\nu_0,\nu_1,\nu_\infty) = (1,0,0)$
and $(\nu_0,\nu_1,\nu_\infty) = (-1,0,0)$ gives
\begin{align}
\label{eq:Gauss-V-and-F-special-1}
& V^{(0)}(1,0,0)
 = 
F\left( \lambda_0,\lambda_1,\lambda_\infty ;\hbar\right) 
- F\left( \lambda_0-\hbar,\lambda_1,\lambda_\infty ;\hbar\right) 
% \\ \notag & 
- \frac{1}{\hbar} \frac{\partial F_0}{\partial \lambda_0} 
+ \frac{1}{2} \frac{\partial^2 F_0}{\partial \lambda_0^2}, \\
%%%
\label{eq:Gauss-V-and-F-special-2}
& V^{(0)}(-1,0,0) 
 = 
F\left( \lambda_0+\hbar,\lambda_1,\lambda_\infty ;\hbar\right) 
- F\left( \lambda_0,\lambda_1,\lambda_\infty ;\hbar\right) 
- \frac{1}{\hbar} \frac{\partial F_0}{\partial \lambda_0} 
- \frac{1}{2} \frac{\partial^2 F_0}{\partial \lambda_0^2}.
\end{align} 
Then, the desired equality \eqref{eq:three-term-general} 
for $j=0$ follows immediately from 
\eqref{eq:Gauss-Voros-difference-special}, 
\eqref{eq:Gauss-V-and-F-special-1},
\eqref{eq:Gauss-V-and-F-special-2} 
and the explicit expression of $F_0$
given in \S \ref{subsection:quantum-Gauss}.

As we have seen in Remark \ref{rem:symmetry-Gauss}, 
the free energy is symmetric with respect to the parameters
$(\lambda_0, \lambda_1, \lambda_\infty)$. 
Therefore, \eqref{eq:three-term-general} for $j=0$ 
implies the other equalities for $j=1$ and $\infty$.
Thus we have proved Theorem \ref{thm:main-theorem-in-part2} (ii).

%%%%%%%%%%%%%%%%%%%%%%%%%%%%%%%%%%%%%%%%%%%%%%%%%%%%%%%%%%%%%%%%%%%%%%%
\subsubsection{Proof of Theorem \ref{thm:main-theorem-in-part2} (iii)}
\label{subsection:solving-difference-F}

Let $X_j$ ($j \in S$) be the shift operator defined by 
\begin{equation}\label{eq:Xj}
X_j = e^{- \hbar \partial_{\lambda_j}} 
\left( e^{\hbar \partial_{\lambda_j}} - 1 \right)^2 ~~:~~
F(\lambda_j) \mapsto  
F(\lambda_j+\hbar) - 2F(\lambda_j) + F(\lambda_j-\hbar), 
\end{equation} 
where 
$\partial_{\lambda_j} = {\partial}/{\partial \lambda_j}$.
Using this operator, the three term relation 
\eqref{eq:three-term-general} is written as 
\begin{align} 
\label{eq:Gauss_Xj}
X_j F\left(\underline{\lambda}; \hbar\right) 
&= \log{\Lambda}(\underline{\lambda}) 
- 2 \log{(2 \lambda_j)} - \log{(2 \lambda_j + \hbar)} - \log{(2 \lambda_j - \hbar)}. 
\end{align}
Our goal is to solve the difference equation 
(or invert the operator $X_j$) and obtain 
the explicit expression of the coefficients of $F$.
For this purpose, we use 

\begin{lem} \label{lem:partial-solution-of-difference-eq}
\begin{itemize}
\item[{\rm (i)}] 
Let 
$G(\underline{\lambda}; \hbar)  = 
\sum_{g = 0}^{\infty} \hbar^{2g-2} G_g(\underline{\lambda})$
be the formal series with the coefficients
\begin{align}
G_0(\underline{\lambda}) & 
= 
\frac{(\lambda_0+\lambda_1+\lambda_\infty)^2}{2} \log(\lambda_0+\lambda_1+\lambda_\infty)
+\frac{(\lambda_0-\lambda_1-\lambda_\infty)^2}{2} \log(\lambda_0-\lambda_1-\lambda_\infty)  \\
& 
+\frac{(\lambda_0+\lambda_1+\lambda_\infty)^2}{2} \log(\lambda_0+\lambda_1+\lambda_\infty)
+\frac{(\lambda_0+\lambda_1+\lambda_\infty)^2}{2} \log(\lambda_0+\lambda_1+\lambda_\infty)
\notag \\ 
& 
-3(\lambda_0^2+\lambda_1^2+\lambda_\infty^2), \notag
\end{align}
\begin{align}
G_1(\underline{\lambda}) & = 
- \frac{1}{12} \log \Lambda, \\
\label{eq:Gg-higher}
G_g(\underline{\lambda}) &  
= \frac{B_{2g}}{2g(2g-2)} 
\left\{\frac{1}{(\lambda_0+\lambda_1+\lambda_\infty)^{2g-2}} 
+\frac{1}{(\lambda_0-\lambda_1+\lambda_\infty)^{2g-2}} \right.  \\
& 
\left. 
+\frac{1}{(\lambda_0+\lambda_1-\lambda_\infty)^{2g-2}}
+\frac{1}{(\lambda_0-\lambda_1-\lambda_\infty)^{2g-2}} \right\} 
\qquad (g \ge 2). 
\notag 
\end{align}
Then, $G(\underline{\lambda}; \hbar)$ satisfies the following difference equation:
\begin{equation}
X_j G(\underline{\lambda};\hbar)  =  \log \Lambda
\qquad (j \in \{0,1,\infty \}).
\end{equation}

\item[{\rm (ii)}]
Let $H(\underline{\lambda}; \hbar)  = 
\sum_{g = 0}^{\infty} \hbar^{2g-2} H_g(\underline{\lambda})$
be the formal series with the coefficients
\begin{align}
H_{0}(\underline{\lambda}) & = 
-2 \lambda_0^2 \log(2\lambda_0)
-2 \lambda_1^2 \log(2\lambda_1)
-2 \lambda_\infty^2 \log(2\lambda_\infty)
+ 3 (\lambda_0^2 + \lambda_1^2 + \lambda_\infty^2), \\
H_{1}(\underline{\lambda}) & = 
\frac{1}{12} \log (\lambda_0 \lambda_1 \lambda_\infty ),  \\
\label{eq:Hg-higher}
H_g(\underline{\lambda}) & = 
- \frac{B_{2g}}{2g(2g-2)}\left\{ 
\frac{1}{(2\lambda_0)^{2g-2}}
+ \frac{1}{(2\lambda_1)^{2g-2}}
+ \frac{1}{(2\lambda_\infty)^{2g-2}} \right\} 
\qquad (g \ge 2).
\end{align}
Then, $H(\underline{\lambda}; \hbar)$ satisfies the following difference equation:
\begin{equation}
X_j H(\underline{\lambda}; \hbar) =  
- \log{(2 \lambda_j + \hbar)} - 2 \log{(2 \lambda_j)} - \log{(2 \lambda_j - \hbar)} 
\qquad (j \in \{0,1, \infty \}).
\end{equation}
\end{itemize}
\end{lem}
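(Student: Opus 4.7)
The strategy is to reduce both parts of the lemma to a single one-variable identity, exploiting the operator factorization $X_j = e^{\hbar\partial_{\lambda_j}} + e^{-\hbar\partial_{\lambda_j}} - 2 = 4\sinh^2(\hbar\partial_{\lambda_j}/2)$. Specifically, introduce the one-variable formal Laurent series in $\hbar$,
\begin{equation*}
\phi(a;\hbar) := \hbar^{-2}\left(\tfrac{1}{2}a^2\log a - \tfrac{3}{4}a^2\right) - \tfrac{1}{12}\log a + \sum_{g\ge 2}\frac{B_{2g}\,\hbar^{2g-2}}{2g(2g-2)\,a^{2g-2}},
\end{equation*}
together with the one-variable shift operator $X^a := e^{\hbar\partial_a}+e^{-\hbar\partial_a}-2$. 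I first establish the central one-variable identity $X^a\phi(a;\hbar) = \log a$ by expanding the left-hand side order by order in $\hbar$: the $\hbar^0$ coefficient is $\partial_a^2\phi_0(a) = \log a$ by a direct computation, and the vanishing of all higher-order coefficients reduces, after simplification, to the formal series identity
\begin{equation*}
\frac{t^2}{4\sinh^2(t/2)} = 1 - \sum_{n\ge 1}\frac{(2n-1)B_{2n}}{(2n)!}\,t^{2n},
\end{equation*}
which follows from the standard generating function $\frac{t}{2}\coth(t/2) = \sum_{n\ge 0}\frac{B_{2n}}{(2n)!}t^{2n}$ by one differentiation.

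For part (i), I decompose $\log\Lambda(\underline\lambda) = \sum_{\epsilon}\log a_\epsilon$, where $a_\epsilon := \lambda_0 + \epsilon_1\lambda_1 + \epsilon_\infty\lambda_\infty$ for $\epsilon=(\epsilon_1,\epsilon_\infty)\in\{+,-\}^2$. The key observation is that, with the convention $\epsilon_0 := 1$, the shift $\lambda_j\mapsto\lambda_j\pm\hbar$ induces $a_\epsilon\mapsto a_\epsilon\pm\hbar$ (since $\epsilon_j\in\{\pm 1\}$); hence $X_j$ acts on any function of $a_\epsilon$ alone as the one-variable operator $X^{a_\epsilon}$. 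Setting $\widetilde G(\underline\lambda;\hbar) := \sum_\epsilon\phi(a_\epsilon;\hbar)$, the central identity yields $X_j\widetilde G = \sum_\epsilon\log a_\epsilon = \log\Lambda$. It remains to verify $\widetilde G = G$: the coefficients $\widetilde G_g$ match $G_g$ directly for $g\ge 1$, and for $g=0$ one uses the elementary sum $\sum_\epsilon a_\epsilon^2 = 4(\lambda_0^2+\lambda_1^2+\lambda_\infty^2)$ to see that $\sum_\epsilon(-\tfrac{3}{4}a_\epsilon^2) = -3(\lambda_0^2+\lambda_1^2+\lambda_\infty^2)$, reproducing the polynomial correction in $G_0$.

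For part (ii), I exploit the operator identity $X^{\mu,2\hbar} := e^{2\hbar\partial_\mu} + e^{-2\hbar\partial_\mu} - 2 = X^\mu(X^\mu + 4)$, obtained from $e^{\hbar\partial_\mu} - e^{-\hbar\partial_\mu} = (e^{\hbar\partial_\mu/2} - e^{-\hbar\partial_\mu/2})(e^{\hbar\partial_\mu/2} + e^{-\hbar\partial_\mu/2})$. A direct inspection shows that, modulo a $\lambda$-independent constant, $H(\underline\lambda;\hbar) = -\sum_{j\in S}\phi(2\lambda_j;\hbar)$ (the substitution $\mu = 2\lambda_j$ in $-\phi_g(\mu)$ reproduces the $j$-th summand of $H_g$). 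Since the shift $\lambda_j\mapsto\lambda_j\pm\hbar$ produces $2\lambda_j\mapsto 2\lambda_j\pm 2\hbar$, we have $X_j H = -X^{\mu,2\hbar}\phi(2\lambda_j;\hbar)$, and the factorization together with the central identity gives
\begin{equation*}
X_j H = -X^\mu\log\mu\big|_{\mu=2\lambda_j} - 4\log(2\lambda_j) = -\log(2\lambda_j+\hbar) - 2\log(2\lambda_j) - \log(2\lambda_j-\hbar),
\end{equation*}
the required right-hand side. The main technical obstacle is the bookkeeping in the central identity: the kernel of $X^a$ contains every polynomial of degree $\le 1$ in $a$, so inverting $X^a$ against $\log a$ leaves an ambiguous linear integration constant; this ambiguity is absorbed precisely by the polynomial correction terms in $G_0$ and $H_0$ together with the symmetrization over $\epsilon$ and $j\in S$ respectively.
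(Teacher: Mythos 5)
Your proof is correct. For part (i) it follows essentially the paper's route: your ``central identity'' $X^a\phi(a;\hbar)=\log a$, obtained from $\tfrac{t}{2}\coth(t/2)=\sum_{n\ge0}\tfrac{B_{2n}}{(2n)!}t^{2n}$, is precisely the content of the paper's Proposition \ref{prop:Bernoulli-generating} (ii) and Proposition \ref{prop:difference-eq:sol} (i) (with $a=\lambda+\mu$), and the observation that $X_j$ acts on any function of $a_\epsilon=\lambda_0+\epsilon_1\lambda_1+\epsilon_\infty\lambda_\infty$ as the one-variable operator is exactly why the paper can apply that proposition factor by factor to $\log\Lambda$; your symmetrized bookkeeping via $\sum_\epsilon a_\epsilon^2=4(\lambda_0^2+\lambda_1^2+\lambda_\infty^2)$ correctly reproduces the polynomial part of $G_0$ (and, incidentally, the intended form of $G_0$, whose displayed formula in the statement repeats one sign combination three times by a typo). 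For part (ii) you take a genuinely different and cleaner route: the paper solves the auxiliary equations $XH_\pm=\log(2\lambda\pm\hbar)$ by invoking the Bernoulli-polynomial generating function at $t=\pm 1/2$ and then needs the identities \eqref{bernoulli:minus} and \eqref{bernoulli:value_at_a_half} to recombine $2G+H_++H_-$ into the stated closed form, whereas your factorization $e^{2\hbar\partial_\mu}+e^{-2\hbar\partial_\mu}-2=X^{\mu}(X^{\mu}+4)$ lets you recycle the single one-variable identity, apply $(X^{\mu}+4)$ to $\log\mu$ directly, and bypass Bernoulli polynomials altogether. What the paper's version buys is Proposition \ref{prop:inverse-formula-2}, i.e. the Bernoulli-polynomial machinery it reuses later for the Voros coefficients in Theorem \ref{thm:main-theorem-in-part2} (iv); what your version buys is a shorter, more conceptual verification of this particular lemma. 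Your closing remark about the kernel of $X^a$ is harmless here, since the lemma only asks you to verify given candidates rather than to invert $X_j$ uniquely.
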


\begin{proof}
Firstly, we note that the terms $G_0$, $H_0$ are chosen so that the relations
\begin{equation}
\partial_{\lambda_j}^2 G_0(\underline{\lambda}) = \log \Lambda, 
\quad 
\partial_{\lambda_j}^2 H_0(\underline{\lambda}) = - 4 \log{(2 \lambda_j)} 
\end{equation}
hold for each $j \in \{0,1,\infty \}$. 
Therefore, the claim (i) (resp., (ii)) immediately follows from 
Proposition \ref{prop:difference-eq:sol} (i) 
(resp., Proposition \ref{prop:difference-eq:sol} (ii)) 
which will be proved in \S \ref{subsection:solving-difference-eq}.  
\end{proof}

Note that the formal series 
$F$ and $G + H$ have the same first two terms:
\begin{equation}
F_0 = G_0 + H_0, \quad
F_1 = G_1 + H_1.
\end{equation}
Lemma \ref{lem:partial-solution-of-difference-eq} 
shows that they satisfy the same difference equations; 
that is, 
\begin{equation}
X_j (F - (G + H)) = 0 \qquad (j \in \{0,1,\infty \}).
\end{equation}
Then, Proposition \ref{prop:almost-uniqueness-2} implies that 
\begin{equation} \label{eq:F-G-H-Gauss}
F - (G + H) = \sum_{g = 2}^{\infty} \hbar^{2g-2} 
\left( C_g 
+ \sum_{j \in S} C^{(j)}_{g} \lambda_j
+ \sum_{\substack{j,k \in S \\ j \ne k}} 
C^{(jk)}_g \lambda_j \lambda_k 
+ C^{(01\infty)}_g \lambda_0 \lambda_1 \lambda_\infty 
\right)
\end{equation}
holds. Here $C_g$, $C^{(j)}_{g}$, $C^{(jk)}_g$ 
and $C^{(01\infty)}_g$ are some complex numbers
which are independent of $\lambda_0, \lambda_1, \lambda_\infty$. 
To show that these constants vanish, we use the homogeneity of $F_g$'s.
\begin{lem}
The following relations hold 
for any nonzero complex number $t$.
\begin{eqnarray} 
\label{eq:homogenity-Fg-Gauss}
F_g(t \lambda_0, t \lambda_1, t \lambda_\infty) & = & 
t^{2-2g} F_g(\lambda_0, \lambda_1, \lambda_\infty) \qquad (g \ge 2), \\
G_g(t \lambda_0, t \lambda_1, t \lambda_\infty) & = & 
t^{2-2g} G_g(\lambda_0, \lambda_1, \lambda_\infty) \qquad (g \ge 2), \\
\label{eq:homogenity-Hg-Gauss}
H_g(t \lambda_0, t \lambda_1, t \lambda_\infty) & = & 
t^{2-2g} H_g(\lambda_0, \lambda_1, \lambda_\infty) \qquad (g \ge 2).
\end{eqnarray}
\end{lem}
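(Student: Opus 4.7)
The homogeneity of $G_g$ and $H_g$ for $g \geq 2$ is immediate from the explicit formulas \eqref{eq:Gg-higher} and \eqref{eq:Hg-higher}: each summand in $G_g$ has the form $B_{2g}/\bigl(2g(2g-2)(\lambda_0 \pm \lambda_1 \pm \lambda_\infty)^{2g-2}\bigr)$, and each summand in $H_g$ has the form $-B_{2g}/\bigl(2g(2g-2)(2\lambda_j)^{2g-2}\bigr)$. In both cases the denominators are homogeneous of degree $2g-2$ in $\underline{\lambda}$, so replacing $\lambda_j$ by $t\lambda_j$ multiplies the expression by $t^{-(2g-2)} = t^{2-2g}$.

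For $F_g$, my plan is to exploit the scaling behavior of the Gauss spectral curve under $\underline{\lambda} \mapsto t\underline{\lambda}$ together with the topological recursion. A direct inspection of \eqref{eq:Gauss_parameterization}--\eqref{eq:Lambda-Gauss}, using that $\Lambda$ is homogeneous of degree $4$ and that the ratios $\sqrt{\Lambda}/\lambda_\infty^2$, $(\lambda_\infty^2 + \lambda_0^2 - \lambda_1^2)/\lambda_\infty^2$, as well as the points $\beta_{j\pm}$, are all homogeneous of degree $0$, shows that the parametrization transforms as $(x(z), y(z)) \mapsto (x(z), t\,y(z))$. Consequently $x(z)$, the set $R$ of ramification points, and the conjugation map $\bar z$ are all invariant, while $W_{0,1} = y(z)\,dx(z) \mapsto t\,W_{0,1}$.

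The next step is to prove by induction on $2g-2+n \geq 1$ that
\[
W_{g,n}(z_1,\ldots,z_n)\big|_{\underline{\lambda} \to t\underline{\lambda}}
= t^{2-2g-n}\, W_{g,n}(z_1,\ldots,z_n).
\]
The base case $W_{0,1}$ has weight $1 = 2 - 0 - 1$ by the computation above, and the base case $W_{0,2} = B$ has weight $0 = 2 - 0 - 2$ since $B$ is independent of $\underline{\lambda}$. The recursion kernel $K_r$ of \eqref{eq:RecursionKernel} scales as $K_r \mapsto t^{-1} K_r$ because it contains $y(z) - y(\bar z)$ in the denominator. Each term on the right-hand side of \eqref{eq:TR} for $W_{g,n+1}$ then carries weight $-1 + \bigl(2 - 2g_1 - (|I_1|+1)\bigr) + \bigl(2 - 2g_2 - (|I_2|+1)\bigr) = 2 - 2g - (n+1)$, and analogously for the $W_{g-1,n+2}$ contribution, matching the claimed weight.

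Finally, the definition \eqref{def:Fg2} gives $F_g$ as a sum of residues of $\Phi(z)\, W_{g,1}(z)$, where $\Phi$ is a primitive of $W_{0,1}$ and hence itself scales as $\Phi \mapsto t\,\Phi$. Combined with $W_{g,1} \mapsto t^{1-2g}\, W_{g,1}$, this yields $F_g \mapsto t \cdot t^{1-2g} F_g = t^{2-2g}\, F_g$ for $g \geq 2$, as desired. The only mildly delicate point in the whole argument is the verification that $(x,y) \mapsto (x,t\,y)$ is really induced by $\underline{\lambda} \mapsto t\underline{\lambda}$, but once the degree bookkeeping in \eqref{eq:Gauss_parameterization}--\eqref{eq:Lambda-Gauss} is carried out this is mechanical, so no genuine obstacle arises.
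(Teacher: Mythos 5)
Your proof is correct. For \eqref{eq:homogenity-Fg-Gauss} the paper simply invokes the homogeneity theorem of Eynard--Orantin (\cite[Theorem 5.3]{EO}), which asserts that $F_g$ scales by $t^{2-2g}$ under the rescaling $y \mapsto t\,y$ of the spectral curve; applying it still requires the observation you make at the outset, namely that $\underline{\lambda}\mapsto t\underline{\lambda}$ leaves $x(z)$, the ramification points and the conjugation map unchanged in \eqref{eq:Gauss_parameterization} while sending $y(z)$ to $t\,y(z)$. What you do differently is to reprove the cited scaling property from scratch: the induction on $2g-2+n$ showing $W_{g,n}\mapsto t^{2-2g-n}W_{g,n}$, based on the weights of $W_{0,1}$, $W_{0,2}$ and the kernel $K_r$ in \eqref{eq:TR}, together with the weight of $\Phi$ in \eqref{def:Fg2}, is exactly the standard argument underlying that theorem, and your bookkeeping is accurate (one can choose the primitive $\Phi$ to scale as $t\,\Phi$ since the residues in \eqref{def:Fg2} are independent of that choice). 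The paper's route is shorter because it outsources this induction to a citation; yours is self-contained and makes the mechanism explicit. Your treatment of $G_g$ and $H_g$ by inspection of \eqref{eq:Gg-higher} and \eqref{eq:Hg-higher} matches what the paper leaves implicit.
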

The first equality \eqref{eq:homogenity-Fg-Gauss} 
is a consequence of \cite[Theorem 5.3]{EO}. 
Therefore, we can conclude that 
the right hand-side of \eqref{eq:F-G-H-Gauss} 
must vanish due to the homogeneity. 
Consequently, we obtain the equality $F = G + H$ 
which proves the desired equality for the free energy of the Gauss curve.
This completes the proof of Theorem \ref{thm:main-theorem-in-part2} (iii).

\begin{rem}
The other results in Theorem \ref{thm:main-theorem-in-part2} (iii) 
are easily obtained from the difference equation \eqref{eq:three-term-general}
by a similar method presented here. 
In particular, the formal series in Proposition \ref{prop:difference-eq:sol} (i) 
appears when we compute the action of inverse operator of $X_j$
on (each factor of) $\log \Lambda(\underline{\lambda})$ 
appearing in \eqref{eq:three-term-general}. 
Similarly, the other formal series in Proposition \ref{prop:difference-eq:sol} (ii)
is the inverse of $R_{j}(\underline{\lambda};\hbar)$. 
Although the solution of the difference equation is not unique, 
we can fix the ambiguity with the aid of homogeneity, as we have done here. 
\end{rem}

%%%%%%%%%%%%%%%%%%%%%%%%%%%%%%%%%%%%%%%%%%%%%%%%%%%%%%%%%%%%%%%%%%%%%%%
\subsubsection{Proof of Theorem \ref{thm:main-theorem-in-part2} (iv)}

Here we derive the explicit formula of the coefficient of 
the Voros coefficient $V^{(0)}(\underline{\lambda}, \underline{\nu}; \hbar)$
of the quantum Gauss curve 
by using the relation \eqref{eq:V-and-F-general} 
between the Voros coefficient and the free energy.
Firstly, applying the difference operator $e^{\hbar \partial_{\lambda_0}} - 1$
to the both sides of the relation \eqref{eq:V-and-F-general} for $j=0$, 
we have 
\begin{multline} \label{eq:Gauss-Voros-X0}
\left( e^{\hbar \partial_{\lambda_0}} - 1 \right) 
V^{(0)}(\underline{\lambda}, \underline{\nu}; \hbar) 
= 
e^{\hbar \partial_{\lambda_0} /2} 
X_0 F\left(\hat{\lambda}_0, \hat{\lambda}_1, \hat{\lambda}_\infty; \hbar\right) \\
+ 
\left( e^{\hbar \partial_{\lambda_0}} - 1 \right)
\left\{- \frac{1}{\hbar}\frac{\partial F_0}{\partial\lambda_0}
+ \frac{1}{2} \left( \nu_0  \frac{\partial^2 F_0}{\partial\lambda_0^2} 
+ \nu_1 \frac{\partial^2 F_0}{\partial\lambda_0 \lambda_1}
+ \nu_\infty \frac{\partial^2 F_0}{\partial\lambda_0 \lambda_\infty} 
\right) \right\}, %\\
\end{multline}
where $X_0$ is the operator given in \eqref{eq:Xj} for $j=0$.
Using \eqref{eq:Gauss_Xj}, we have
\begin{align} \label{eq:Gauss_X0_shift}
& 
e^{\hbar \partial_{\lambda_0} /2} 
X_0 F\left(\hat{\lambda}_0, \hat{\lambda}_1, \hat{\lambda}_\infty; \hbar\right) \\
& =
e^{(1 - \nu_0 - \nu_1 - \nu_\infty) \hbar \partial_{\lambda_0} /2} 
\log{(\lambda_0 + \lambda_1 + \lambda_{\infty})} 
+ e^{(1 - \nu_0 + \nu_1 - \nu_\infty) \hbar \partial_{\lambda_0} /2} 
\log{(\lambda_0 - \lambda_1 + \lambda_{\infty})} \notag \\
& \quad 
+ e^{(1 - \nu_0 - \nu_1 + \nu_\infty) \hbar \partial_{\lambda_0} /2} 
\log{(\lambda_0 + \lambda_1 - \lambda_{\infty})} 
+ e^{(1 - \nu_0 + \nu_1 + \nu_\infty) \hbar \partial_{\lambda_0} /2} 
\log{(\lambda_0 - \lambda_1 - \lambda_{\infty})} \notag \\
& \quad 
- e^{(1- \nu_0) \hbar \partial_{\lambda_0}/2}
\left\{ \log{(2 \lambda_0 + \hbar)} 
+ 2 \log{(2 \lambda_0)} + \log{(2 \lambda_0 - \hbar)} \right\}. \notag
\end{align}
To obtain the expression of the Voros coefficient, 
let us compute the action of the inverse operator of 
$\left( e^{\hbar \partial_{\lambda_0}} - 1 \right)$
to each terms appearing in the right hand-side of \eqref{eq:Gauss_X0_shift}.
For the purpose, Proposition \ref{prop:inverse-formula-2} is quite useful. 
For example, applying Proposition \ref{prop:inverse-formula-2} (i)
for $t=\nu_0 + \nu_1 + \nu_\infty$, $\lambda=\lambda_0$, 
$\mu = \lambda_1 + \lambda_{\infty}$, 
we can verify that the formal series
\begin{align}  
\label{eq:Gauss_Voros-calculation-1}
G(\underline{\lambda}, \underline{\nu}; \hbar)= 
& ~ 
\hbar^{-1} \Bigl( 
(\lambda_0 + \lambda_1 + \lambda_{\infty}) \log(\lambda_0 + \lambda_1 + \lambda_{\infty})
- \lambda_0 \Bigr)  - \frac{\nu_0 + \nu_1 + \nu_\infty}{2} 
\log{(\lambda_0 + \lambda_1 + \lambda_{\infty})}  \\
&
+ \sum_{m = 1}^{\infty} \frac{\hbar^m}{m(m+1)}  
\frac{B_{m+1}((\nu_0 + \nu_1 + \nu_\infty + 1)/2)}
{(\lambda_0 + \lambda_1 + \lambda_{\infty})^m} \notag 
\end{align}
is a particular solution of the difference equation 
\begin{equation}
\left( e^{\hbar \partial_{\lambda_0}} - 1 \right) 
G(\underline{\lambda}, \underline{\nu}; \hbar)
= 
e^{(1 - \nu_0 - \nu_1 - \nu_\infty) \hbar \partial_{\lambda_0} /2}  
\log{(\lambda_0 + \lambda_1 + \lambda_{\infty})}. 
\end{equation}
Namely, \eqref{eq:Gauss_Voros-calculation-1} coincides with 
the result of action of $\left( e^{\hbar \partial_{\lambda_0}} - 1 \right)^{-1}$ 
on the first term in the right hand side of \eqref{eq:Gauss_X0_shift} 
modulo constant terms.
Thanks to Proposition \ref{prop:inverse-formula-2},
we can compute the action of $\left( e^{\hbar \partial_{\lambda_0}} - 1 \right)^{-1}$ 
on the other terms in \eqref{eq:Gauss_X0_shift} similarly. 
Although \eqref{eq:Gauss_Voros-calculation-1} contains terms 
which are proportional to $\hbar^{-1}$ and $\hbar^0$,  
those terms cancel with the other terms 
\begin{align*}  
\frac{\partial F_0}{\partial {\lambda_0}} 
&= (\lambda_0 + \lambda_1 + \lambda_{\infty})\log{(\lambda_0 + \lambda_1 + \lambda_{\infty})} 
+ (\lambda_0 - \lambda_1 + \lambda_{\infty})\log{(\lambda_0 - \lambda_1 + \lambda_{\infty})} \\
& \quad
+ (\lambda_0 + \lambda_1 - \lambda_{\infty})\log{(\lambda_0 + \lambda_1 - \lambda_{\infty})}
+ (\lambda_0 - \lambda_1 - \lambda_{\infty})\log{(\lambda_0 - \lambda_1 - \lambda_{\infty})} \notag \\
& \quad 
- 4 \lambda_0 \log(2\lambda_0), \notag \\
%%%
\frac{\partial^2 F_0}{\partial {\lambda_0}^2} 
&= \log{(\lambda_0 + \lambda_1 + \lambda_{\infty})} 
+ \log{(\lambda_0 - \lambda_1 + \lambda_{\infty})}
+ \log{(\lambda_0 + \lambda_1 - \lambda_{\infty})}
+ \log{(\lambda_0 - \lambda_1 - \lambda_{\infty})} \\
&\quad 
- 4 \log{(2 \lambda_0)}, \notag \\
%%%%%
\frac{\partial^2 F_0}{\partial \lambda_0 \partial \lambda_1} 
&= \log{(\lambda_0 + \lambda_1 + \lambda_{\infty})} 
- \log{(\lambda_0 - \lambda_1 + \lambda_{\infty})}
+ \log{(\lambda_0 + \lambda_1 - \lambda_{\infty})}
- \log{(\lambda_0 - \lambda_1 - \lambda_{\infty})}, \\
%%%%%
\frac{\partial^2 F_0}{\partial \lambda_0 \partial \lambda_\infty} 
&= \log{(\lambda_0 + \lambda_1 + \lambda_{\infty})} 
+ \log{(\lambda_0 - \lambda_1 + \lambda_{\infty})}
- \log{(\lambda_0 + \lambda_1 - \lambda_{\infty})}
- \log{(\lambda_0 - \lambda_1 - \lambda_{\infty})}
\end{align*}
in the right hand side of \eqref{eq:Gauss-Voros-X0}.
In the end, we can verify that the formal series
\begin{align} \label{eq:explicit-Voros-Gauss-0}
& V^{(0)}(\underline{\lambda}, \underline{\nu}; \hbar)  = 
\sum_{m = 1}^{\infty} 
\frac{\hbar^m}{m (m + 1)}
\left\{
\frac{B_{m + 1} ((\nu_0 + \nu_1 + \nu_{\infty} + 1)/2)}
{(\lambda_0 + \lambda_1 + \lambda_{\infty})^m}
+ \frac{B_{m + 1} ((\nu_0 - \nu_1 + \nu_{\infty} + 1)/2)}
{(\lambda_0 - \lambda_1 + \lambda_{\infty})^m}
\right. \\ 
&
+ \frac{B_{m + 1} ((\nu_0 + \nu_1 - \nu_{\infty} + 1)/2)}
{(\lambda_0 + \lambda_1 - \lambda_{\infty})^m}
+ \frac{B_{m + 1} ((\nu_0 - \nu_1 - \nu_{\infty} + 1)/2)}
{(\lambda_0 - \lambda_1 - \lambda_{\infty})^m} 
\left.
- \frac{B_{m + 1}(\nu_0) + B_{m + 1}(\nu_0 + 1)}{(2\lambda_0)^m} 
\right\} \notag
\end{align}
satisfies the difference equation \eqref{eq:Gauss-Voros-X0}. 
Since we derive the formula \eqref{eq:explicit-Voros-Gauss-0} by solving 
the difference equation, the above expression only holds modulo 
formal series of $\hbar$ whose coefficients are 
independent of $\lambda_j$ a priori (cf. Proposition \ref{prop:almost-uniqueness-1}).
However, by a similar argument used in the previous subsection, 
we can conclude that \eqref{eq:explicit-Voros-Gauss-0} is indeed 
an explicit expression of the Voros coefficient for the quantum Gauss curve.
This completes the proof of Theorem \ref{thm:main-theorem-in-part2} (iv).

%%%%%%%%%%%%%%%%%%%%%%%%%%%%%%%%%%%%%%%%%%%%%%%%%%%%%%%%%%%%%%%%%%%%%%%
%%%%%%%%%%%%%%%%%%%%%%%%%%%%%%%%%%%%%%%%%%%%%%%%%%%%%%%%%%%%%%%%%%%%%%%
%%%%%%%%%%%%%%%%%%%%%%%%%%%%%%%%%%%%%%%%%%%%%%%%%%%%%%%%%%%%%%%%%%%%%%%
\section{Conclusion}
In this article we have established a relation between 
the free energy for spectral curves arising from 
the confluent family of the Gauss hypergeometric equations 
and the Voros coefficients of the associated quantum curves.
An obvious question is the possibility of generalization 
of our computational scheme to other examples; 
e.g., higher order or higher dimensional hypergeometric equations. 
We hope to discuss the problem in a future work. 

A generalization of Theorem \ref{thm:main-theorem-in-part2}
to higher genus spectral curves are also important.  
The Voros coefficients for Schr{\"o}dinger-type equations 
with higher genus classical limit are very interesting objects; 
they realize the cluster variables 
(or the Fock-Goncharov coordinates of \cite{GMN09})
when we discuss the parametric Stokes phenomenon. 
Several wall-crossing formulas are obtained in the context
(see \cite{GMN09, GMN12, FIMS} etc.). 
However, as is mentioned in \cite{BCD} and \cite{IS}, 
quantum curves for higher genus (or non-admissible)
spectral curves may include infinitely many $\hbar$-correction terms.
It is also noticed that the admissible spectral curves 
in the sense of \cite{BE} must be of genus $0$.
Therefore, we cannot expect that there is a straightforward generalization 
of our results for those spectral curves. 
Anyway, we need to understand how the theory of quantum curves 
should be generalized to higher genus spectral curves.

%%%%%%%%%%%%%%%%%%%%%%%%%%%%%%%%%%%%%%%%%%%%%%%%%%%%%%%%%%%%%%%%%%%%%%%
%%%%%%%%%%%%%%%%%%%%%%%%%%%%%%%%%%%%%%%%%%%%%%%%%%%%%%%%%%%%%%%%%%%%%%%
%%%%%%%%%%%%%%%%%%%%%%%%%%%%%%%%%%%%%%%%%%%%%%%%%%%%%%%%%%%%%%%%%%%%%%%
%%%%%%%%%%%%%%%%%%%%%%%%%%%%%%%%%%%%%%%%%%%%%%%%%%%%%%%%%%%%%%%%%%%%%%%
%%%%%%%%%%%%%%%%%%%%%%%%%%%%%%%%%%%%%%%%%%%%%%%%%%%%%%%%%%%%%%%%%%%%%%%

\appendix

%%%%%%%%%%%%%%%%%%%%%%%%%%%%%%%%%%%%%%%%%%%%%%%%%%%%%%%%%%%%%%%%%%%%%%%
%%%%%%%%%%%%%%%%%%%%%%%%%%%%%%%%%%%%%%%%%%%%%%%%%%%%%%%%%%%%%%%%%%%%%%%
%%%%%%%%%%%%%%%%%%%%%%%%%%%%%%%%%%%%%%%%%%%%%%%%%%%%%%%%%%%%%%%%%%%%%%%

\section{Contiguity relations of the Voros coefficient}
\label{sec:contiguity-relation}

Here we give a proof of Lemma \ref{lem:Gauss_Voros-parameter}
by using the contiguity relations of the Gauss hypergeometric equation.
A recent result of Oshima (see \cite{Oshima} and references therein)
shows that his framework is efficient in a study of linear differential
equations of rational coefficients theoretically and in practice.
As is discussed in \cite{IK},
it would give a computation method of Voros coefficients
in a unified and algorithmic manner.
Although it is enough to use the traditional contiguity relations
because we only discuss second order hypergeometric equations
in this paper, here we give them along Oshima's theory for future works.

%%%%%%%%%%%%%%%%%%%%%%%%%%%%%%%%%%%%%%%%%%%%%%%%%%%%%%%%%%%%%%%%%%%%%%%
\subsection{Contiguity relations of the Gauss hypergeometric equation}

In our study we need contiguity relations of the following operator 
defined though the middle convolution.
\begin{align}
\label{op:gauss}
P_{\text{Gauss}} (\kappa_1, \kappa_2, \mu)
:&= {\rm{RAd}}(\partial^{-\mu/\hbar})
{\rm{RAd}}(x^{\kappa_1/\hbar} (1-x)^{\kappa_2/\hbar}) \partial_x\\
&= x(1-x) (\hbar \partial_x)^2 + (c - (a + b + \hbar)x) 
(\hbar \partial_x) - ab.\notag\\[1ex]
\notag 
&
\left(
\begin{array}{c}
a = - (\mu + \kappa_1 + \kappa_2), \;
b =- \mu + \hbar, \;
c = - \mu - \kappa_1 + \hbar
  \\
\quad\Longleftrightarrow\quad
\kappa_1 = b - c, \;
\kappa_2 = -a + c - \hbar, \;
\mu = -b + \hbar
\end{array}\right)
\end{align}
Here ${\rm{RAd}}$ denotes the reduced adjoint operator (\cite[\S2.2]{Oshima}).
We specify the parameters 
\begin{equation}
a = \hat{\lambda}_0+\hat{\lambda}_1+\hat{\lambda}_\infty + \frac{\hbar}{2}, \quad
b = \hat{\lambda}_0+\hat{\lambda}_1-\hat{\lambda}_\infty + \frac{\hbar}{2}, \quad
c = 2\hat{\lambda}_0 + \hbar
\end{equation}
so that the equation $P_{\rm Gauss} \psi_{\rm Gauss} = 0$ 
is equivalent to the quantum Gauss curve \eqref{eq:Gauss_eq(d/dx)}
by a gauge transform (cf. \eqref{eq:gauge-tramsform-Gauss}).
These relations are equivalent to 
\begin{equation}
\hat{\lambda}_0 = - \frac{\mu}{2} - \frac{\kappa_1}{2}, \quad 
\hat{\lambda}_1 = - \frac{\mu}{2} - \frac{\kappa_2}{2}, \quad
\hat{\lambda}_\infty = - \frac{\kappa_1}{2} - \frac{\kappa_2}{2} - \frac{\hbar}{2}
\end{equation}
in terms of the parameters related to the middle convolution.

We will use the following contiguity relations of 
the operator $P_{\rm Gauss}$ (cf. \cite[\S5 and \S4]{Oshima}).
\begin{thm}
\label{thm:contiguity}
The differential operator \eqref{op:gauss} satisfies 
the following contiguity relations.
\begin{align*}
\partial_x \circ P_{\text{\rm Gauss}} (\kappa_1, \kappa_2, \mu)
&=
P_{\text{\rm Gauss}} (\kappa_1, \kappa_2, \mu - \hbar) \circ \partial_x,
\\
\big(x + (\hbar - \mu) (\hbar\partial_x)^{-1} \big)
\circ P_{\text{\rm Gauss}} (\kappa_1, \kappa_2, \mu)
&=
P_{\text{\rm Gauss}} (\kappa_1 + \hbar, \kappa_2, \mu) \circ
\big(x -\mu (\hbar \partial_x)^{-1}\big),
\\
\big(x-1 + (\hbar - \mu)(\hbar \partial_x)^{-1}\big)
\circ P_{\text{\rm Gauss}} (\kappa_1, \kappa_2, \mu)
&=
P_{\text{\rm Gauss}} (\kappa_1, \kappa_2+\hbar, \mu) \circ 
\big(x -1 - \mu (\hbar \partial_x)^{-1}\big).
\end{align*}
\end{thm}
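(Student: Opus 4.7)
The plan is to establish the three contiguity relations by combining a direct calculation for the first identity with structural arguments from Oshima's middle-convolution framework for the other two. Throughout, I write $D = \hbar\partial_x$, so that $[D, f(x)] = \hbar f'(x)$, and use the dictionary in \eqref{op:gauss} between $(\kappa_1, \kappa_2, \mu)$ and the classical parameters $(a,b,c)$.

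For the first relation, I note that the shift $\mu \mapsto \mu - \hbar$ corresponds to $(a,b,c) \mapsto (a+\hbar, b+\hbar, c+\hbar)$. A direct computation using the commutator identity above yields
\begin{equation*}
\partial_x \circ P_{\text{Gauss}}(\kappa_1, \kappa_2, \mu) = \hbar^{-1}\bigl[x(1-x)D^3 + (c+\hbar - (a+b+3\hbar)x)D^2 - (ab + \hbar(a+b+\hbar))D\bigr],
\end{equation*}
which one readily recognizes as $P_{\text{Gauss}}(\kappa_1, \kappa_2, \mu - \hbar) \circ \partial_x$ upon substituting the shifted parameters and observing that $(a+\hbar)(b+\hbar) = ab + \hbar(a+b+\hbar)$.

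For the second and third relations, the operators $x + (\hbar - \mu)(\hbar\partial_x)^{-1}$ and $x - \mu(\hbar\partial_x)^{-1}$ should be interpreted as elements of the microdifferential algebra acting on WKB-type solutions. The key step is the intertwining identity
\begin{equation*}
\partial^{-\mu/\hbar} \cdot x = x \cdot \partial^{-\mu/\hbar} + (\mu/\hbar)\, \partial^{-\mu/\hbar - 1},
\end{equation*}
which terminates after two terms since $\partial_x^{\,2}(x) = 0$. This identifies $T_\mu := x - \mu (\hbar\partial_x)^{-1}$ as the conjugate of multiplication by $x$ under ${\rm RAd}(\partial^{-\mu/\hbar})$, up to rescaling. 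Since the shift $\kappa_1 \mapsto \kappa_1 + \hbar$ arises from premultiplying the character $x^{\kappa_1/\hbar}(1-x)^{\kappa_2/\hbar}$ by $x$, which sends $(a,b,c) \mapsto (a-\hbar, b, c-\hbar)$, the contiguity relation (ii) is the image of this premultiplication under the two-step construction of $P_{\text{Gauss}}$. The third relation then follows by replacing $x$ with $x - 1$ and repeating the argument, using that $\kappa_2 \mapsto \kappa_2 + \hbar$ corresponds to premultiplying the character by $(1-x)$ and noting that $\partial_x(1-x) = -1$ produces the same inhomogeneous term up to the sign absorbed in the dictionary.

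The main obstacle is the rigorous handling of $(\hbar\partial_x)^{-1}$ and of the fractional powers $\partial^{-\mu/\hbar}$ in the reduced adjoint. This is cleanest within Oshima's algorithmic framework \cite{Oshima}, which provides the formal calculus that makes the intertwining arguments above rigorous; alternatively, one may verify (ii) and (iii) directly by applying both sides to a WKB solution $\psi$ of $P_{\text{Gauss}}(\kappa_1, \kappa_2, \mu) \psi = 0$ and comparing the resulting solutions of the shifted equation, which reduces the identity to a finite polynomial identity in $(a,b,c,\hbar)$ after cancelling the common factor $\psi$.
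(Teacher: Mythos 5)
The paper itself offers no proof of Theorem \ref{thm:contiguity}: it simply cites Oshima's middle-convolution calculus (\cite[\S 4, \S 5]{Oshima}) and takes the three relations as known. Your proposal therefore goes further than the paper. Your verification of the first relation is complete and correct: writing $D=\hbar\partial_x$, one gets $D\circ P = x(1-x)D^3+(c+\hbar-(a+b+3\hbar)x)D^2-(ab+\hbar(a+b+\hbar))D$, and since $\mu\mapsto\mu-\hbar$ sends $(a,b,c)\mapsto(a+\hbar,b+\hbar,c+\hbar)$ this is exactly $P(\kappa_1,\kappa_2,\mu-\hbar)\circ D$. For the second and third relations your structural argument is only a sketch, and it contains a sign slip: $[\partial^{s},x]=s\,\partial^{s-1}$ gives $\partial^{-\mu/\hbar}\cdot x = x\cdot\partial^{-\mu/\hbar}-(\mu/\hbar)\,\partial^{-\mu/\hbar-1}$, not $+(\mu/\hbar)$; the conclusion $T_\mu=\partial^{-\mu/\hbar}\,x\,\partial^{\mu/\hbar}=x-\mu(\hbar\partial_x)^{-1}$ is nevertheless correct. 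You also do not explain why the left factor comes out as $x+(\hbar-\mu)(\hbar\partial_x)^{-1}$, which is the one genuinely nontrivial point. Your fallback, however, does close the argument: expanding both sides in powers of $D$ using $D^{-1}f=\sum_{k\ge 0}(-\hbar)^k f^{(k)}D^{-1-k}$ (a finite sum here) reduces each relation to polynomial identities in $(a,b,c,\hbar)$; for instance the $D^{-1}$ coefficients of relation (ii) agree precisely because $b=\hbar-\mu$ and $b-\hbar=-\mu$, and the $D^{2},D^{1},D^{0}$ coefficients match after substituting $a'=a-\hbar$, $b'=b$, $c'=c-\hbar$. Note that this should be done as an operator identity in the localized Weyl algebra rather than by ``applying both sides to a WKB solution and cancelling $\psi$'', since the theorem asserts equality of operators. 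With the sign corrected and the direct expansion carried out, your route is a legitimate, self-contained substitute for the paper's citation.
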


In particular, the parameter shift 
$(\kappa_1, \kappa_2, \mu) \mapsto (\kappa_1, \kappa_2, \mu - \hbar)$ 
in the first line can be translated into the shift 
$(\nu_0,\nu_1,\nu_\infty) \mapsto (\nu_0-1, \nu_1-1, \nu_\infty)$
of the parameters $\nu_j$'s.

%%%%%%%%%%%%%%%%%%%%%%%%%%%%%%%%%%%%%%%%%%%%%%%%%%%%%%%%%%%%%%%%%%%%%%%
\subsection{Proof of Lemma \ref{lem:Gauss_Voros-parameter}}
\label{subsection:proof-of-contiguity-Voros}

Now we prove Lemma \ref{lem:Gauss_Voros-parameter} 
with the aid of Theorem \ref{thm:contiguity}. 
The following proof is based on the same ideas of 
\cite{Takei08} and \cite{IK}.
We will only give a proof of \eqref{eq:Gauss-Voros-contiguity-1}
in Lemma \ref{lem:Gauss_Voros-parameter}
since the other equalities can be derived similarly 
(see Remark \ref{rem:sketch-of-proof-of-contiguity}).  

%%%%%%%%%%%%%%%%%%%%%%%%%%%%%%%%%%%%%%%%%%%%%%%%%%
\subsubsection{Preliminary for the proof}
Let $\psi_{\rm Gauss}(x,\hbar)$ be the WKB solution to 
the equation 
\begin{equation} \label{eq:Gauss-equation-appendix}
P_{\rm Gauss} \psi_{\rm Gauss} = 0,
\end{equation} 
and 
$S_{\rm Gauss} = \sum_{m \ge -1} \hbar^{m} S_{{\rm Gauss}, m}(x)$ 
be its logarithmic derivative. 
In view of \eqref{eq:gauge-tramsform-Gauss}, 
the logarithmic derivative $S$ of the WKB solution 
of the quantum Gauss curve \eqref{eq:Gauss_eq(d/dx)} 
(which is considered in \S \ref{subsec:proof-gauss}) 
is related to $S_{\rm Gauss}$ as follows:
\begin{equation} \label{eq:relations-of-S-Gauss}
S_{\rm Gauss}(x,\hbar) = S(x,\hbar) 
- \left( \frac{\hat{\lambda}_0}{\hbar} 
+ \frac{\nu_{0+}+\nu_{0-}}{2} \right) \frac{1}{x} 
- \left( \frac{\hat{\lambda}_1}{\hbar} 
+ \frac{\nu_{1+}+\nu_{1-}}{2} \right) \frac{1}{x-1}.
\end{equation}
The relation \eqref{eq:relations-of-S-Gauss} implies 
that the formal series $S$ and $S_{\rm Gauss}$ 
differ only by first two terms 
(i.e., terms proportional to $\hbar^{-1}$ and $\hbar^{0}$). 
Therefore, the Voros coefficients 
\begin{equation}
V_{\rm Gauss}^{(j)}(\underline{\lambda}, \underline{\nu};\hbar) 
:=
\int_{\gamma_j} \left( 
S_{\rm Gauss}(x,\hbar) - \hbar^{-1} S_{{\rm Gauss}, -1}(x) 
-  S_{{\rm Gauss}, 0}(x) \right) \, dx
\end{equation}
of the equation \eqref{eq:Gauss-equation-appendix} 
defined for the same path $\gamma_j$ ($j \in \{0,1,\infty \}$)
given in \S \ref{subsec:main-theorem} 
coincides with those for the quantum Gauss curve; that is, 
$V_{\rm Gauss}^{(j)} = V^{(j)}$ holds for each $j \in \{0,1,\infty \}$.
Therefore, our task is to show that the Voros coefficient 
$V_{\rm Gauss}^{(0)}$ satisfies \eqref{eq:Gauss-Voros-contiguity-1}.

We will use the symbols $x_1$ and $x_2$ for 
points on $\gamma_0$ which eventually tend to the 
end point and initial point of $\gamma_0$.
We will also use the following obvious expression:
\begin{equation}
\left( V^{(0)}(\underline{\lambda}, \underline{\nu};\hbar) 
 = \right) ~
V^{(0)}_{\rm Gauss}(\underline{\lambda}, \underline{\nu};\hbar) 
 =
\lim_{\substack{x_1 \to 0 \\ x_2 \to 0^\dagger}} 
V_{\rm Gauss}^{x_1,x_2}(\underline{\lambda}, \underline{\nu};\hbar) 
\end{equation}
with
\begin{equation} \label{eq:pre-Voros-Gauss}
V_{\rm Gauss}^{x_1,x_2}(\underline{\lambda}, \underline{\nu};\hbar)  
:=  
\int^{x_1}_{x_2} \left( 
S_{\rm Gauss}(x,\hbar) - \hbar^{-1} S_{{\rm Gauss}, -1}(x) 
-  S_{{\rm Gauss}, 0}(x) \right) \, dx.
\end{equation}
Here the integration in \eqref{eq:pre-Voros-Gauss} 
is taken along a part of $\gamma_0$, 
and $0$ and $0^\dagger$ are the end point and initial point 
of $\gamma_0$, respectively. 
(These points are preimages of $x=0$ by the projection 
$\Sigma \to {\mathbb P}^1$, or in other words, 
$0$ corresponds to $\beta_{0+}$ on $z$-plane 
while $0^\dagger$ corresponds to $\beta_{0-}$.)

In the following proof we need to analyze the 
behavior of $S_{{\rm Gauss}, m}(x)$ when $x$ approaches to 
$0$ and $0^\dagger$.
A straightforward computation shows
\begin{eqnarray}
\label{eq:S-Gauss--1}
S_{{\rm Gauss}, -1}(x) & = & 
\begin{cases}
O(x^{0}) & \text{when $x \to 0$}, 
\\[+.5em]
-\dfrac{2\lambda_0}{x} + O(x^{0})  & \text{when $x \to 0^\dagger$},
\end{cases} \\[+.5em]
\label{eq:S-Gauss-0} 
S_{{\rm Gauss}, 0}(x) & = & 
\begin{cases}
O(x^{0}) & \text{when $x \to 0$}, \\[+.5em]
\dfrac{\nu_0}{x} + O(x^{0}) & \text{when $x \to 0^\dagger$},
\end{cases} \\[+.5em]
\label{eq:S-Gauss-m}
S_{{\rm Gauss}, m}(x) & = & O(x^0) \quad
\text{when $x \to 0$ and $x \to 0^\dagger$}.
\end{eqnarray}
The last property \eqref{eq:S-Gauss-m} is a consequence 
of the fact that the correlation function 
$W_{g,n}(z_1,\dots,z_n)$ is holomorphic at 
$z_i = \beta_{0\pm}$ (which are not ramification points). 
Thus $S_{\rm Gauss}$ behaves as 
\begin{equation} \label{eq:behavior-of-S-Gauss}
S_{\rm Gauss}(x,\hbar) = 
\begin{cases}
A(\hbar) + O(x^1) & \text{when $x \to 0$}, \\[+.7em]
- \dfrac{2\hat{\lambda}_0}{\hbar x} 
+ O(x^0) 
& \text{when $x \to 0^\dagger$}
\end{cases}
\end{equation}
near the origin. Here $A(\hbar)$, which is independent of $x$, 
can be determined by looking the behavior of coefficients 
of \eqref{eq:Gauss-equation-appendix} near the origin; 
the explicit form is given by 
\begin{equation} \label{eq:expression-of-A}
A(\hbar) = \frac{
(2\hat{\lambda}_0 + 2\hat{\lambda}_1 + 2\hat{\lambda}_\infty + \hbar)
(2\hat{\lambda}_0 + 2\hat{\lambda}_1 - 2\hat{\lambda}_\infty + \hbar)}
{4\hbar(2\hat{\lambda}_0+\hbar)}.
\end{equation}
(Precisely speaking, since $S_{\rm Gauss}(x,\hbar)$ is a 
formal series of $\hbar$, so does $A(\hbar)$. 
Therefore, $A(\hbar)$ should be understood as 
the Taylor expansion of the right hand-side of the formula 
\eqref{eq:expression-of-A}.)

%%%%%%%%%%%%%%%%%%%%%%%%%%%%%%%%%%%%%%%%%%%%%%%%%%
\subsubsection{Derivation of contiguity relation 
for the Voros coefficients}
Now we use Theorem \ref{thm:contiguity} to derive the 
contiguity relations satisfied by the Voros coefficient.
It follows from the first relation in 
Theorem \ref{thm:contiguity} that
\begin{equation}
\partial_x \psi_{\rm Gauss}(x,\nu_0,\nu_1) = 
{\rm const} \times \psi_{\rm Gauss}(x,\nu_0-1,\nu_1-1)
\end{equation}
holds. 
(In this proof we omit the dependence on other variables since 
we are mainly interested in dependences on $\nu_0$ and $\nu_1$.)
Then, the logarithmic derivative $S_{\rm Gauss}$ 
of $\psi_{\rm Gauss}$ satisfies
\begin{equation} \label{eq:contiguity-WKB-Gauss}
S_{\rm Gauss}(x,\nu_0, \nu_1) - S_{\rm Gauss}(x,\nu_0-1, \nu_1-1) 
= - \frac{d}{dx} \log S_{\rm Gauss}(x,\nu_0, \nu_1). 
\end{equation}
Integrating the both sides from $x_2$ to $x_1$, we have
\begin{multline} \label{eq:contiguity-pre-Voros-Gauss}
V_{\rm Gauss}^{x_1,x_2}(\nu_0, \nu_1) 
- V_{\rm Gauss}^{x_1,x_2}(\nu_0-1, \nu_1-1) 
= - \log \left( 
\frac{S_{\rm Gauss}(x_1,\nu_0, \nu_1)}
{S_{\rm Gauss}(x_2,\nu_0, \nu_1)} \right) \\
- \sum_{m \in \{-1,0 \} } \hbar^{m} \left(
\int^{x_1}_{x_2}S_{{\rm Gauss}, m}(x,\nu_0, \nu_1) \, dx
- \int^{x_1}_{x_2}S_{{\rm Gauss}, m}(x,\nu_0-1, \nu_1-1) \, dx  
\right).
\end{multline}

In order to derive a difference equation satisfied by 
$V_{\rm Gauss}^{(0)}$, let us look at the behavior 
of the right hand-side of \eqref{eq:contiguity-pre-Voros-Gauss}
when $x_1 \to 0$ and $x_2 \to 0^\dagger$.
Since $S_{{\rm Gauss}, -1}(x)$ is independent of $\nu_i$'s, we have
$S_{{\rm Gauss}, -1}(x,\nu_0, \nu_1) 
- S_{{\rm Gauss}, -1}(x,\nu_0-1, \nu_1-1) = 0$.
Moreover, a straightforward computation shows 
\begin{align}
& \int^{x}S_{{\rm Gauss}, 0}(x,\nu_0, \nu_1) \, dx
- \int^{x}S_{{\rm Gauss}, 0}(x,\nu_0-1, \nu_1-1) \, dx \\
& \nonumber \quad = 
- \frac{1}{2} \log \left( 
\frac{2 \lambda_0 
\sqrt{\lambda_\infty^2 x^2 - 
(\lambda_0^2-\lambda_1^2+\lambda_\infty^2)x + \lambda_0^2} 
- (\lambda_0^2-\lambda_1^2+\lambda_\infty^2)x + 2\lambda_0^2}{x^2} 
\right) \\
& \nonumber \qquad 
+ \frac{1}{2} \log \left( 
- 2 \lambda_1 \sqrt{\lambda_\infty^2 x^2 - 
(\lambda_0^2-\lambda_1^2+\lambda_\infty^2)x + \lambda_0^2} 
+ (\lambda_0^2-\lambda_1^2-\lambda_\infty^2)x - 
(\lambda_0^2+\lambda_1^2-\lambda_\infty^2)
\right) \\
\nonumber 
& = 
\begin{cases}
\displaystyle 
\frac{1}{2} \log \left( 
- \frac{4 \lambda_0^2}{(\lambda_0+\lambda_1-\lambda_\infty)
(\lambda_0+\lambda_1+\lambda_\infty)}  
\right) + O(x^1)  & \text{when $x \to 0$}, \\[+1.em]
\displaystyle 
\frac{1}{2} \log \left( 
- \frac{(\lambda_0+\lambda_1-\lambda_\infty)
(\lambda_0+\lambda_1+\lambda_\infty)}{4 \lambda_0^2} x^2  
\right) + O(x^1)  & \text{when $x \to 0^\dagger$}.
\end{cases}
\end{align}
Therefore, the right hand-side of \eqref{eq:contiguity-pre-Voros-Gauss} 
coincides with 
\begin{multline}
- \log A(\hbar) - \frac{1}{2} \log \left( 
- \frac{4 \lambda_0^2}{(\lambda_0+\lambda_1-\lambda_\infty)
(\lambda_0+\lambda_1+\lambda_\infty)}  
\right) \\ 
+ \log \left( - \frac{2 \hat{\lambda}_0}{\hbar x_2} \right)
+ \frac{1}{2} \log \left( 
- \frac{(\lambda_0+\lambda_1+\lambda_\infty)
(\lambda_0+\lambda_1-\lambda_\infty)}{4 \lambda_0^2} x_2^2  
\right) \\
= 
- \log\left[ 
\frac{\lambda_0^2}{\hat{\lambda}_0(\hat{\lambda}_0 + {\hbar}/{2})}
\,
\frac{(\hat{\lambda}_0+\hat{\lambda}_1
+\hat{\lambda}_\infty + {\hbar}/{2})}
{(\lambda_0+\lambda_1+\lambda_\infty)} \,
\frac{(\hat{\lambda}_0+\hat{\lambda}_1
- \hat{\lambda}_\infty + {\hbar}/{2})}
{(\lambda_0+\lambda_1-\lambda_\infty)}
\right]
\end{multline}
modulo terms which vanish 
in the limit $x_1 \to 0$ and $x_2 \to 0^\dagger$. 
Therefore, taking this limit in the both sides of 
\eqref{eq:contiguity-pre-Voros-Gauss}, 
we obtain the desired relation \eqref{eq:Gauss-Voros-contiguity-1}. 
This completes the proof.  

\begin{rem}\label{rem:sketch-of-proof-of-contiguity}
Combining the equalities in Theorem \ref{thm:contiguity}, 
we can derive 
\begin{eqnarray}
\left( x \hbar \partial_x - \mu + \hbar \right)
\psi_{\rm Gauss}(\nu_0,\nu_1, \nu_\infty) & = &  
{\rm cosnt} \times 
\psi_{\rm Gauss}(\nu_0,\nu_1-1, \nu_\infty+1), \\
\left( (x-1) \hbar \partial_x - \mu + \hbar \right)
\psi_{\rm Gauss}(\nu_0,\nu_1, \nu_\infty) & = &  
{\rm cosnt} \times 
\psi_{\rm Gauss}(\nu_0-1,\nu_1, \nu_\infty+1)
\end{eqnarray}
as counterparts of \eqref{eq:contiguity-WKB-Gauss}. 
Then we can perform a similar calculation as above 
to derive the other equalities 
\eqref{eq:Gauss-Voros-contiguity-2} 
and \eqref{eq:Gauss-Voros-contiguity-3} 
in Lemma \ref{lem:Gauss_Voros-parameter}.  
We omit the details.
\end{rem}

%%%%%%%%%%%%%%%%%%%%%%%%%%%%%%%%%%%%%%%%%%%%%%%%%%%%%%%%%%%%%%%%%%%%%%%
%%%%%%%%%%%%%%%%%%%%%%%%%%%%%%%%%%%%%%%%%%%%%%%%%%%%%%%%%%%%%%%%%%%%%%%
%%%%%%%%%%%%%%%%%%%%%%%%%%%%%%%%%%%%%%%%%%%%%%%%%%%%%%%%%%%%%%%%%%%%%%%

\section{Bernoulli numbers and difference equations}
\label{sec:bernoulli}

Here we summarize several useful formulas of Bernoulli numbers 
which are applied to solve difference equations satisfied by 
free energies and Voros symbols in \S \ref{sec:Voros-vs-TR}.

%%%%%%%%%%%%%%%%%%%%%%%%%%%%%%%%%%%%%%%%%%%%%%%%%%%%%%%%%%%%%%%%%%%%%%%
\subsection{Definitions of Bernoulli numbers and Bernoulli polynomials}

The Bernoulli number $\{B_n\}_{n \geq 0}$ can be defined through the generating function as
\begin{equation}
\label{def:Bernoulli}
\frac{w}{e^w - 1} = \sum_{n = 0}^{\infty} B_n \frac{w^n}{n!}.
\end{equation}
From this definition we find
\begin{equation} \label{def:Bernoulli-2}
B_0 = 1, \quad B_1 = -\frac{1}{2}, \quad B_2 = \frac{1}{6},
\quad B_4 = -\frac{1}{30},
\quad B_{2k + 1} = 0\;\text{for}\; k \geq 1.
\end{equation}
The Bernoulli polynomials, which we denote by $\{B_n(t)\}_{n \geq 0}$,
can also be defined through the generating function:
\begin{equation}
\label{def:BernoulliPoly}
\frac{w e^{t w}}{e^w - 1} = \sum_{ n = 0}^{\infty} B_n(t) \frac{w^n}{n!}.
\end{equation}
Other useful expressions are also known, e.g.,
\begin{equation}
B_n(t) = \sum_{k = 0}^n \binom{n}{k} B_{n -k} t^k.
\end{equation}
Important relations for the Bernoulli polynomials are
\begin{align}
\label{bernoulli:reflection}
 B_n (1-t) &= (-1)^n B_n(t), \\
\label{bernoulli:minus}
(-1)^n B_n(-t) &= B_n(t) + n t^{n-1}, \\
\label{bernoulli:multiplication}
  B_n (mx) &= m^{n-1} \sum_{k=0}^{m-1} B_n \left( x + \frac{k}{m} \right)
  \quad (m \ge 1).
% {\leftarrow \text{Prop.4(ii)の証明に使う?ので加えました}}
\end{align}
We will also use formulas about special values of the Bernoulli polynomials:
\begin{align}
\label{bernoulli:value_at_zero}
 B_n (0)&= B_n, \\
\label{bernoulli:value_at_a_half}
B_n\left(\frac{1}{2}\right) &= \left(\frac{1}{2^{n-1}} - 1\right) B_n.
\end{align}

We use these Bernoulli numbers and Bernoulli polynomials to give
particular solutions of some difference equations.
Its basic idea is due to \cite{Aoki-Tanda}.

\begin{prop}
\label{prop:Bernoulli-generating}
\begin{alignat*}{2}
\text{{\rm{(i)}}}
&\quad
& \frac{1}{e^w - 1} &= \frac{1}{w}
+ \sum_{n = 0}^{\infty} \frac{B_{n + 1}}{\, n + 1 \,} \frac{\, w^n \,}{\, n! \,}.
\\
\text{{\rm{(ii)}}}
&\quad
& \frac{e^w}{(e^w - 1)^2} 
&= 
\frac{1}{w^2}
- \sum_{n = 0}^{\infty} \frac{B_{n + 2}}{\, n + 2 \,} \frac{\, w^n \,}{\, n! \,}.
\end{alignat*}
\end{prop}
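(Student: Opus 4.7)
The plan is to derive both identities directly from the generating function definition \eqref{def:Bernoulli} of the Bernoulli numbers; no appeal to the Bernoulli polynomial formulas \eqref{def:BernoulliPoly}--\eqref{bernoulli:value_at_a_half} is needed.

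For part (i), I would start with $w/(e^w-1) = \sum_{n\ge 0} B_n w^n/n!$ and simply divide by $w$, obtaining $1/(e^w-1) = \sum_{n\ge 0} B_n w^{n-1}/n!$. I would then peel off the $n=0$ term, which contributes $B_0/w = 1/w$, and reindex the remaining sum by $m = n-1$. Since $1/n! = 1/((m+1)\cdot m!)$, the coefficient of $w^m/m!$ in the remaining sum is exactly $B_{m+1}/(m+1)$, which is the right hand side of (i).

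For part (ii), the key observation is the differential relation $e^w/(e^w-1)^2 = -\frac{d}{dw}\bigl(1/(e^w-1)\bigr)$. I would therefore differentiate the identity just proved in (i) term by term. The derivative of $1/w$ is $-1/w^2$, so after the sign flip we obtain the leading $1/w^2$ on the right hand side of (ii); the derivative of $B_{m+1} w^m/((m+1)\,m!)$ vanishes for $m=0$, and for $m\ge 1$ equals $B_{m+1} w^{m-1}/((m+1)(m-1)!)$. Reindexing with $k = m-1$ and noting $(m+1)(m-1)! = (k+2)\,k!$, this contributes $-\sum_{k\ge 0} \bigl(B_{k+2}/(k+2)\bigr) w^k/k!$ after the sign flip, as claimed.

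No serious obstacle is expected: since $w/(e^w-1)$ is holomorphic in a neighborhood of the origin, all series involved converge on a common disc $|w|<2\pi$, and term-by-term differentiation is valid there. The only bookkeeping point that deserves explicit care is the index shift $n \leftrightarrow m+1$ (and then $m \leftrightarrow k+1$), where an off-by-one error could otherwise obscure the appearance of the divisors $n+1$ and $n+2$ on the right hand sides.
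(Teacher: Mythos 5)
Your proof is correct and follows exactly the paper's own route: part (i) is obtained by dividing the defining generating function \eqref{def:Bernoulli} by $w$ and reindexing, and part (ii) by differentiating (i) with respect to $w$ (the paper's proof is just a terser statement of the same two steps). The index bookkeeping and signs in your computation all check out.
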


\begin{proof}
The claim (i) follows from \eqref{def:Bernoulli}.
By differentiating (i) with respect to $w$, we get (ii).
\end{proof}

\begin{prop}
\label{prop:BernoulliPoly-generating}
\begin{alignat*}{2}
\text{{\rm{(i)}}}
&\quad
& \frac{e^{t w}}{e^w - 1} &=
\frac{1}{w}
+ \sum_{n = 0}^{\infty} \frac{B_{n + 1}(t)}{\, n + 1 \,} \frac{\, w^n \,}{\, n! \,}.
\\
\text{{\rm{(ii)}}}
&\quad
& \frac{e^{(1 + t)w}}{(e^w - 1)^2} &=
\frac{1}{w^2} + \frac{t}{w} +
\sum_{n = 0}^{\infty}
\left\{t \frac{\, B_{n + 1}(t) \,}{\, n + 1 \,} 
- \frac{\, B_{n + 2}(t) \,}{\, n + 2 \,}\right\} \frac{\, w^n \,}{\, n! \,}.
\end{alignat*}
\end{prop}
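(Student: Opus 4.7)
The plan is to obtain (i) as an immediate consequence of the definition \eqref{def:BernoulliPoly} of the Bernoulli polynomials, and then derive (ii) by a single differentiation trick applied to (i).

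For part (i), I would start from \eqref{def:BernoulliPoly},
\[
\frac{w\, e^{t w}}{e^w - 1} = \sum_{n = 0}^{\infty} B_n(t)\, \frac{w^n}{n!},
\]
and simply divide both sides by $w$. Using $B_0(t) = 1$, the $n=0$ term on the right contributes the polar piece $1/w$, and the remaining terms reindex (setting $m = n - 1$) into $\sum_{m \ge 0} B_{m+1}(t)\, w^m/\bigl((m+1)\, m!\bigr)$, matching the claim. This is essentially the same bookkeeping as in the proof of Proposition \ref{prop:Bernoulli-generating}(i).

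For part (ii), the key observation is that $e^{(1+t)w}/(e^w-1)^2$ can be recognized as arising from differentiating the generating function of (i). Indeed, by the quotient rule,
\[
\frac{d}{dw}\!\left(\frac{e^{tw}}{e^w-1}\right)
= \frac{t\, e^{tw}}{e^w-1} - \frac{e^{(1+t)w}}{(e^w-1)^2},
\]
so I can solve
\[
\frac{e^{(1+t)w}}{(e^w-1)^2}
= \frac{t\, e^{tw}}{e^w - 1} \;-\; \frac{d}{dw}\!\left(\frac{e^{tw}}{e^w-1}\right).
\]
I would then substitute the expansion from (i) into both terms on the right. Multiplying (i) by $t$ reproduces $t/w + t\sum_n B_{n+1}(t) w^n/\bigl((n+1)\,n!\bigr)$, while differentiating (i) termwise yields $-1/w^2 + \sum_m B_{m+2}(t)\, w^m/\bigl((m+2)\, m!\bigr)$ after the reindexing $k = m - 1$. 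Taking the difference and collecting by powers of $w$ gives exactly the claimed expansion, with the singular contributions assembling into $1/w^2 + t/w$ and the regular part producing the bracketed coefficient $t B_{n+1}(t)/(n+1) - B_{n+2}(t)/(n+2)$.

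The argument is short and essentially routine; no real obstacle is anticipated. The only point requiring care is the bookkeeping of the two index shifts (dividing by $w$ in (i) and differentiating the resulting series in (ii)), together with a careful separation of the principal part $1/w^2 + t/w$ from the regular series so that the singular terms match on both sides.
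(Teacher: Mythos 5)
Your proposal is correct and follows essentially the same route as the paper: part (i) is read off from the definition \eqref{def:BernoulliPoly} by dividing by $w$, and part (ii) is obtained by differentiating (i) with respect to $w$ and substituting the expansion of (i) back into the term $t\,e^{tw}/(e^w-1)$. The index bookkeeping you describe checks out, so there is nothing to add.
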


\begin{proof}
The first statement is a direct consequence of \eqref{def:BernoulliPoly}.
After differentiating (i) with respect to $w$, we have
\begin{equation}
\frac{t e^{tw}}{e^w - 1} - \frac{e^{(t+1)w}}{(e^w-1)^2}
= - \frac{1}{w^2}
+ \sum_{n = 0}^{\infty}
\frac{\, B_{n + 2}(t)\, }{n +2}
\frac{\, w^n \,}{\, n! \,}.
\end{equation}
Then we substitute (i) into the first term of the right-hand side to
obtain (ii).
\end{proof}

%%%%%%%%%%%%%%%%%%%%%%%%%%%%%%%%%%%%%%%%%%%%%%%%%%%%%%%%%%%%%%%%%%%%%%%
\subsection{Application to difference equations}
\label{subsection:solving-difference-eq}

Let $X$ be the difference operator defined by 
\begin{equation}
X = e^{- \hbar \partial_{\lambda}} 
\left( e^{\hbar \partial_{\lambda}} - 1 \right)^2 ~~:~~
F(\lambda) \mapsto  
F(\lambda+\hbar) - 2F(\lambda) + F(\lambda-\hbar). 
\end{equation}
In \S \ref{subsection:solving-difference-F} we need to solve 
difference equations of the form $X F(\lambda) = R(\lambda)$
to obtain the explicit formula for free energies. 
For the purpose, we use
\begin{prop}
\label{prop:difference-eq:sol}
Particular solutions of
a difference equation
\begin{equation} \label{prop:difference-eq:sol:tmp:1}
X F(\lambda) = 
F(\lambda + \hbar) - 2 F(\lambda) + F(\lambda - \hbar) = 
R(\lambda)
\end{equation}
for
\begin{itemize}
\item[{\rm{(i)}}] 
$R(\lambda) = \log (\lambda+\mu)$ 
(where $\mu$ is independent of $\lambda$), and
\item[{\rm{(ii)}}] 
$R(\lambda) = 2\log(2\lambda) +  \log (2\lambda + \hbar) +  \log (2\lambda - \hbar)$,
\end{itemize}
are respectively given by
\begin{alignat*}{2}
\text{{\rm{(i)}}}&\quad&
F(\lambda)
&= 
(\hbar \partial_{\lambda})^{-2} \log(\lambda+\mu)
- \frac{1}{12} \log (\lambda + \mu)
+ \sum_{n = 2}^{\infty} \frac{B_{2n}}{2n (2n - 2)}
\left(\frac{\hbar}{\lambda+\mu}\right)^{2n-2}.
\\
%%%%%
\text{{\rm{(ii)}}}&\quad&
F(\lambda)
&=
4(\hbar \partial_{\lambda})^{-2} \log (2\lambda)
- \frac{1}{12} \log \lambda
+ \sum_{n = 2}^{\infty}
\frac{B_{2n}}{2 n (2n - 2)} 
\left(\frac{\hbar}{2\lambda}\right)^{2n - 2}.
\end{alignat*}
Here $\partial_\lambda^{-1} f(\lambda)$ means 
an indefinite integral of $f(\lambda)$.
\end{prop}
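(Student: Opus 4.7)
The plan is to treat the difference operator $X$ as a formal pseudo-differential operator in $\hbar\partial_\lambda$ and invert it with the aid of the generating-function identity of Proposition \ref{prop:Bernoulli-generating} (ii). Observing that
\[
X=e^{\hbar\partial_\lambda}-2+e^{-\hbar\partial_\lambda}=\bigl(2\sinh(\hbar\partial_\lambda/2)\bigr)^2
\]
and using $1/(2\sinh(w/2))^2=e^w/(e^w-1)^2$, I obtain the formal expansion
\[
X^{-1}=\frac{1}{(\hbar\partial_\lambda)^2}-\sum_{n=0}^{\infty}\frac{B_{n+2}}{(n+2)\,n!}\,(\hbar\partial_\lambda)^n,
\]
which, once applied to an appropriate right-hand side, furnishes a particular solution of $XF=R$.

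For (i) I would apply this expansion directly to $R(\lambda)=\log(\lambda+\mu)$. The leading term produces $(\hbar\partial_\lambda)^{-2}\log(\lambda+\mu)$; the $n=0$ contribution gives $-\tfrac{B_2}{2}\log(\lambda+\mu)=-\tfrac{1}{12}\log(\lambda+\mu)$; and for $n\geq 1$ the identity $(\hbar\partial_\lambda)^n\log(\lambda+\mu)=(-1)^{n-1}(n-1)!\,\hbar^n/(\lambda+\mu)^n$ combined with $B_{n+2}=0$ for odd $n+2\geq 3$ restricts the residual sum to $n=2k$ with $k\geq 1$. The arithmetic $(n+2)\,n!/(n-1)!=(2k+2)(2k)$ together with the sign $(-1)^{n-1}=-1$ collapses this into exactly $\sum_{n\geq 2}\tfrac{B_{2n}}{2n(2n-2)}\bigl(\hbar/(\lambda+\mu)\bigr)^{2n-2}$ after the reindexing $k\mapsto n-1$.

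For (ii) the $\hbar$-dependence of $R$ must first be absorbed into the operator. Introducing the shift $T_a:=e^{a\partial_\lambda}$, I will rewrite
\[
R(\lambda)=\bigl(T_{\hbar/2}+2+T_{-\hbar/2}\bigr)\log(2\lambda)=\bigl(2\cosh(\hbar\partial_\lambda/4)\bigr)^2\log(2\lambda).
\]
The key algebraic step is the factorization $T_{\hbar/2}-T_{-\hbar/2}=(T_{\hbar/4}-T_{-\hbar/4})(T_{\hbar/4}+T_{-\hbar/4})$, which gives $X=\bigl(2\sinh(\hbar\partial_\lambda/4)\bigr)^2\bigl(2\cosh(\hbar\partial_\lambda/4)\bigr)^2$, so that $X^{-1}R(\lambda)=\bigl(2\sinh(\hbar\partial_\lambda/4)\bigr)^{-2}\log(2\lambda)$. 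Proposition \ref{prop:Bernoulli-generating} (ii) now applies with $w=\hbar\partial_\lambda/2$, and the same reindexing as in (i) produces the claimed series. The constant $-\tfrac{1}{12}\log 2$ that distinguishes $-\tfrac{1}{12}\log(2\lambda)$ from the stated $-\tfrac{1}{12}\log\lambda$ is harmless since it lies in $\ker X$, and can be absorbed into the ambiguity of particular solutions.

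The principal subtlety to handle is legitimizing the formal manipulation of $X^{-1}$ as a differential operator of infinite order. Since $X=\hbar^2\partial_\lambda^2+O(\hbar^4)$ as a power series in $\hbar$, the inverse should be interpreted as acting on formal series in $\hbar$ whose leading order is governed by $\hbar^{-2}\partial_\lambda^{-2}$; then every coefficient of $\hbar^m$ in $X^{-1}R$ is a well-defined meromorphic function of $\lambda$. As an a posteriori check one may simply apply $X$ to the proposed $F(\lambda)$ term-by-term and verify that the result reproduces $R(\lambda)$; this verification reduces precisely to the generating-function identity used in the derivation, so no genuinely new calculation is needed.
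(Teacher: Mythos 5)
Your proposal is correct. Part (i) is essentially the paper's own argument: both invert $X=e^{-\hbar\partial_\lambda}(e^{\hbar\partial_\lambda}-1)^2$ via the generating function of Proposition \ref{prop:Bernoulli-generating} (ii) and apply the resulting operator to $\log(\lambda+\mu)$, with the odd Bernoulli numbers killing half the terms. Part (ii), however, takes a genuinely different route. The paper splits $R$ into the four logarithms, solves the auxiliary equations $XG=\log(2\lambda)$ and $XH_\pm=\log(2\lambda\pm\hbar)$ separately --- the latter by rewriting them as $e^{-(1\pm 1/2)\hbar\partial_\lambda}(e^{\hbar\partial_\lambda}-1)^2H_\pm=\log(2\lambda)$ and invoking the Bernoulli \emph{polynomial} generating function (Proposition \ref{prop:BernoulliPoly-generating} (ii)) at $t=\pm 1/2$ --- and then recombines $2G+H_++H_-$ using the identities \eqref{bernoulli:minus} and \eqref{bernoulli:value_at_a_half}. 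You instead observe that $R=(2\cosh(\hbar\partial_\lambda/4))^2\log(2\lambda)$ and factor $X=(2\sinh(\hbar\partial_\lambda/4))^2(2\cosh(\hbar\partial_\lambda/4))^2$, so the cosh factors cancel (legitimately, since $(2\cosh(w/2))^2=4+w^2+\cdots$ is invertible as a formal power series) and part (ii) collapses to a single application of the same Bernoulli-\emph{number} identity used in (i), with $w=\hbar\partial_\lambda/2$. This is slicker: it bypasses the Bernoulli polynomial computations and the final simplification step entirely. The only residual discrepancy is the constant $-\tfrac{1}{12}\log 2$ between your $-\tfrac{1}{12}\log(2\lambda)$ and the stated $-\tfrac{1}{12}\log\lambda$; as you note, this lies in $\ker X$, so the stated formula remains a particular solution and the proposition is established.
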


\begin{proof}
\begin{itemize}
\item[(i)]
It follows from
\begin{equation}
e^{-w} (e^w - 1)^2
\left\{
\frac{1}{w^2}
- \sum_{n = 0}^{\infty} \frac{B_{n + 2}}{\, n + 2 \,} \frac{\, w^n \,}{\, n! \,}
\right\}
= 1
\end{equation}
(cf. Proposition \ref{prop:Bernoulli-generating} (ii)) that
\begin{equation}
e^{-\hbar\partial_{\lambda}} (e^{\hbar\partial_{\lambda}} - 1)^2
\left\{
(\hbar\partial_{\lambda})^{-2}
- \sum_{n = 0}^{\infty} \frac{B_{n + 2}}{\, n + 2 \,} 
\frac{\, (\hbar\partial_{\lambda})^n \,}{\, n! \,}
\right\}
= {\rm{id}}.
\end{equation}
Hence we find that
\begin{equation}
\left\{
(\hbar\partial_{\lambda})^{-2}
- \sum_{n = 0}^{\infty} \frac{B_{n + 2}}{\, n + 2 \,} 
\frac{\, (\hbar\partial_{\lambda})^n \,}{\, n! \,}
\right\}
\log (\lambda+\mu)
\end{equation}
gives a solution of \eqref{prop:difference-eq:sol:tmp:1}
for $R(\lambda) = \log(\lambda+\mu)$.
Thus we have proved (i). 
(Note that the Bernoulli number $B_{n}$ vanishes 
for odd $n \ge 3$ as we see in \eqref{def:Bernoulli-2}.)

\item[(ii)]
Let us first solve auxiliary difference equations 
\begin{eqnarray}
X G(\lambda) & = & \log(2\lambda), \label{eq:auxiliary-1} \\ 
X H_{\pm}(\lambda) & = & \log(2\lambda \pm \hbar). \label{eq:auxiliary-2}
\end{eqnarray}
By a similar method used in the proof of (i), 
we can verify that 
\begin{equation}
G(\lambda)=(\hbar \partial_\lambda)^{-2} \log(2\lambda) 
- \frac{1}{12} \log(2\lambda)  
+ \sum_{n=2}^{\infty} \frac{B_{2n}}{2n(2n-2)} \left( \frac{\hbar}{\lambda} \right)^{2n-2}
\end{equation}
satisfies \eqref{eq:auxiliary-1}. 
On the other hand, \eqref{eq:auxiliary-2} is equivalent to 
\begin{equation}
e^{- (1 \pm \frac{1}{2}) \hbar \partial_{\lambda}}
(e^{\hbar \partial_{\lambda}} - 1)^2 H_{\pm}(\lambda) = \log (2\lambda).
\end{equation}
Then, Proposition \ref{prop:BernoulliPoly-generating} (ii) with $t = \pm 1/2$ 
implies that the formal series
\begin{multline}
H_{\pm}(\lambda)
=
(\hbar \partial_{\lambda})^{-2} \log (2\lambda)
\pm \frac{1}{2} 
(\hbar \partial_{\lambda})^{-1} \log (2\lambda)
+ \frac{1}{24} \log (2\lambda)
\\
- \sum_{n = 1}^{\infty}
\left\{\pm \frac{B_{n + 1}(\pm 1/2)}{2 n (n + 1)} 
- \frac{B_{n + 2}(\pm 1/2)}{n (n + 2)}\right\}
\left(- \frac{\hbar}{\lambda}\right)^n
\end{multline}
is a particular solution of \eqref{eq:auxiliary-2}. 
Therefore, the formal series 
$F(\lambda) = 2G(\lambda) + H_{+}(\lambda) + H_{-}(\lambda)$
gives a particular solution of the original problem 
$XF(\lambda) = 2\log(2\lambda)+\log(2\lambda+\hbar)+\log(2\lambda-\hbar)$. 
Then, the claim (ii) follows from 
the identities \eqref{bernoulli:minus} and 
\eqref{bernoulli:value_at_a_half} of Bernoulli polynomials.
\end{itemize}
\end{proof}

The following result is useful to obtain an explicit expression of Voros coefficients.  

\begin{prop} \label{prop:inverse-formula-2}
Particular solutions of a difference equation
\begin{equation}
\left( e^{\hbar \partial_{\lambda}} - 1 \right) F(\lambda) 
= e^{(1 - t) \hbar \partial_{\lambda} /2}  R(\lambda)
\end{equation}
for 
\begin{itemize}
\item[{\rm (i)}] $R = \log(\lambda+\mu)$ 
(where $\mu$ is independent of $\lambda$), and 
\item[{\rm (ii)}] $R = 2 \log{(2 \lambda)} 
+ \log{(2 \lambda + \hbar)}  + \log{(2 \lambda - \hbar)}$
\end{itemize} 
are respectively given by 
\begin{alignat*}{2}
\text{{\rm{(i)}}}&\quad&
F(\lambda)
&= 
(\hbar \partial_{\lambda})^{-1} \log{(\lambda + \mu)} 
- \frac{t}{2} \log{(\lambda + \mu)}  
+ \sum_{m = 1}^{\infty} \frac{\hbar^m}{m(m+1)} 
\frac{B_{m+1}((t + 1)/2)}{(\lambda + \mu)^m}, \\
%%%
\text{{\rm{(ii)}}}&\quad&
F(\lambda) &=
4(\hbar \partial_{\lambda})^{-1} \log{(2 \lambda)} 
- 2 t \log{(2 \lambda)} 
+ \sum_{m = 1}^{\infty} \frac{\hbar^m}{m(m+1)}  
\frac{B_{m+1}(t) + B_{m+1}(t + 1)}{(2 \lambda)^m}.
\end{alignat*}
\end{prop}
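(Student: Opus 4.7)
The plan is to mirror the strategy used in the proof of Proposition \ref{prop:difference-eq:sol}: convert the difference operator $e^{\hbar \partial_\lambda}-1$ into its formal symbolic inverse via a Bernoulli-polynomial generating function, and then evaluate that inverse on the explicit logarithmic right-hand sides. The crucial input is Proposition \ref{prop:BernoulliPoly-generating} (i), which in operator form reads
\begin{equation*}
(e^{\hbar \partial_\lambda} - 1)^{-1} e^{s \hbar \partial_\lambda}
= (\hbar \partial_\lambda)^{-1}
+ \sum_{n = 0}^{\infty} \frac{B_{n+1}(s)}{n+1} \frac{(\hbar \partial_\lambda)^n}{n!}.
\end{equation*}
Since the claim is that certain explicit $F(\lambda)$ solve the difference equations, it suffices to apply this symbolic inverse to the right-hand side of each equation.

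For part (i), I would take $s = (1-t)/2$ and apply the formula above to $\log(\lambda + \mu)$. Using $(\hbar \partial_\lambda)^n \log(\lambda + \mu) = (-1)^{n-1}(n-1)!\, \hbar^n / (\lambda + \mu)^n$ for $n \geq 1$ produces a series of the form $\sum_{n \ge 1} (-1)^{n-1} \hbar^n B_{n+1}((1-t)/2)/[n(n+1)(\lambda+\mu)^n]$. The reflection formula \eqref{bernoulli:reflection} converts $(-1)^{n-1}B_{n+1}((1-t)/2)$ into $B_{n+1}((1+t)/2)$, yielding the claimed series. The $n=0$ term contributes $B_1((1-t)/2)\log(\lambda+\mu) = -\tfrac{t}{2}\log(\lambda+\mu)$, and the $1/w$ piece of the symbolic inverse produces the $(\hbar \partial_\lambda)^{-1}\log(\lambda+\mu)$ term.

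For part (ii), I would first exploit the identities $\log(2\lambda \pm \hbar) = e^{\pm \hbar \partial_\lambda / 2} \log(2\lambda)$ to write
\begin{equation*}
e^{(1-t)\hbar\partial_\lambda/2} R(\lambda)
= \bigl( 2e^{(1-t)\hbar\partial_\lambda/2} + e^{(2-t)\hbar\partial_\lambda/2} + e^{-t\hbar\partial_\lambda/2}\bigr)\log(2\lambda).
\end{equation*}
Then I would apply the symbolic inverse of $e^{\hbar\partial_\lambda}-1$ three times, with $s = (1-t)/2$, $1-t/2$, and $-t/2$ respectively, using the same computation of derivatives of $\log(2\lambda)$. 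The $1/w$ contributions combine to give $4(\hbar\partial_\lambda)^{-1}\log(2\lambda)$ and the $n=0$ terms combine, via $B_1(x) = x - 1/2$, to produce exactly $-2t \log(2\lambda)$.

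The main obstacle will be the $n \geq 1$ identity
\begin{equation*}
(-1)^{n-1}\bigl[\, 2B_{n+1}((1-t)/2) + B_{n+1}(1-t/2) + B_{n+1}(-t/2)\, \bigr]
= \frac{B_{n+1}(t) + B_{n+1}(t+1)}{2^{n}},
\end{equation*}
which is where all the Bernoulli-polynomial functional equations must conspire. I would handle this by using the reflection formula \eqref{bernoulli:reflection} to rewrite the first two terms as $2B_{n+1}((1+t)/2) + B_{n+1}(t/2)$, and a second application of reflection (equivalently, \eqref{bernoulli:minus}) to rewrite the third as $B_{n+1}(t/2+1)$. The right-hand side is then handled by the multiplication formula \eqref{bernoulli:multiplication} with $m=2$ applied to each of $B_{n+1}(t) = 2^n[B_{n+1}(t/2) + B_{n+1}((t+1)/2)]$ and $B_{n+1}(t+1) = 2^n[B_{n+1}((t+1)/2) + B_{n+1}(t/2+1)]$; summing and dividing by $2^n$ matches the transformed left-hand side term by term. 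Once this identity is in place, both (i) and (ii) follow by direct verification that the proposed $F(\lambda)$ maps to the prescribed right-hand side under $e^{\hbar\partial_\lambda}-1$.
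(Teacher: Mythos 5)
Your proposal is correct and follows essentially the same route as the paper: part (i) is exactly the intended application of Proposition \ref{prop:BernoulliPoly-generating} (i) combined with the reflection formula \eqref{bernoulli:reflection}, and part (ii) fleshes out the "similar argument" the paper invokes (decomposing $R$ into shifted copies of $\log(2\lambda)$, applying the same generating-function inverse at three parameter values, and reconciling the Bernoulli polynomials — here via the multiplication formula \eqref{bernoulli:multiplication} in place of the special value \eqref{bernoulli:value_at_a_half} used in Proposition \ref{prop:difference-eq:sol} (ii), which is the natural adaptation for general $t$). All the identity manipulations you carry out check out.
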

\begin{proof}
The claim (i) follows 
directly from Proposition \ref{prop:BernoulliPoly-generating} (i) 
and \eqref{bernoulli:reflection}. 
The second claim (ii) can be verified by a similar argument used 
in the proof of Proposition \ref{prop:difference-eq:sol} (ii).
\end{proof}

In the last, we study solutions of homogeneous difference equations
to discuss a uniqueness of solutions. 

\begin{prop}
\label{prop:almost-uniqueness-1}
A formal power series solution
$F(\lambda) = \sum_{n \geq 0} F_n(\lambda) h^{n}$
of a difference equation
\begin{equation}
F(\lambda + \hbar) - F(\lambda) = 0
\end{equation}
should satisfy $\partial_{\lambda}F_n = 0$ ($n \geq 0$).
Here we regard $\hbar$ as a small parameter and
$F_n(\lambda + \hbar)$ for $n \geq 0$ is defined by the Taylor expansion:
\begin{equation}
\label{def:difference-op}
F_n(\lambda + \hbar)
= \sum_{k = 0}^{\infty} \frac{\hbar^k}{k!}\frac{d^k}{d\lambda^k} F_n(\lambda).
\end{equation}
\end{prop}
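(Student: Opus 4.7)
The plan is to expand the difference equation order by order in $\hbar$ and extract a recursive family of identities from which the conclusion follows by induction on $n$. First, using the Taylor expansion \eqref{def:difference-op} for each $F_n$, I would write
\begin{equation*}
F(\lambda + \hbar) = \sum_{n \ge 0} \hbar^n \sum_{k \ge 0} \frac{\hbar^k}{k!} \frac{d^k F_n}{d\lambda^k}(\lambda)
= \sum_{m \ge 0} \hbar^m \sum_{k = 0}^{m} \frac{1}{k!} \frac{d^k F_{m - k}}{d\lambda^k}(\lambda).
\end{equation*}
Subtracting $F(\lambda) = \sum_{m \ge 0} \hbar^m F_m(\lambda)$ and demanding that each coefficient of $\hbar^m$ vanishes yields, for every $m \ge 1$, the identity
\begin{equation*}
\sum_{k = 1}^{m} \frac{1}{k!} \frac{d^k F_{m - k}}{d\lambda^k}(\lambda) = 0.
\end{equation*}

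Next I would prove $\partial_\lambda F_n = 0$ for all $n \ge 0$ by induction on $n$. The base case $n = 0$ is immediate from the identity at $m = 1$, which reads $\partial_\lambda F_0 = 0$. For the inductive step, assume $\partial_\lambda F_0 = \partial_\lambda F_1 = \cdots = \partial_\lambda F_{n - 1} = 0$; then all higher derivatives of $F_0, \ldots, F_{n - 1}$ vanish as well. Taking the identity above with $m = n + 1$,
\begin{equation*}
\partial_\lambda F_n(\lambda) + \sum_{k = 2}^{n + 1} \frac{1}{k!} \frac{d^k F_{n + 1 - k}}{d\lambda^k}(\lambda) = 0,
\end{equation*}
and observing that every term in the remaining sum involves $d^k F_j / d\lambda^k$ with $j \le n - 1$ and $k \ge 2$, hence vanishes by the inductive hypothesis, we conclude $\partial_\lambda F_n = 0$.

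There is no real obstacle here: the argument is purely formal once one carefully collects powers of $\hbar$, and the only minor subtlety to highlight is that the Taylor series \eqref{def:difference-op} is an identity of formal power series in $\hbar$ (not a convergent expansion), so the vanishing of the coefficient of each $\hbar^m$ is an exact condition rather than an asymptotic one. This justifies the triangular recursion and the induction.
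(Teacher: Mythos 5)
Your proof is correct and follows essentially the same route as the paper: both expand $F(\lambda+\hbar)-F(\lambda)$ as a formal power series in $\hbar$, extract the triangular family of identities $\sum_{k=1}^{m}\frac{1}{k!}\frac{d^k F_{m-k}}{d\lambda^k}=0$ from the vanishing of each coefficient, and conclude $\partial_\lambda F_n=0$ by induction on $n$. Your write-up just makes the re-indexing and the inductive step slightly more explicit than the paper does.
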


\begin{proof}
From \eqref{def:difference-op}, we have
\begin{equation}
F(\lambda+ \hbar) - F(\lambda)
= \sum_{n = 0}^{\infty}
\sum_{k = 0}^{n-1} \frac{\hbar^n}{(n-k)!} \frac{d^{n-k}}{d\lambda^{n-k}} F_k.
\end{equation}
Hence $F(\lambda+\hbar) - F(\lambda) = 0$ implies
\[
\frac{d}{d\lambda} F_0 = 0, \quad
\frac{d}{d\lambda} F_1 + \frac{1}{2!} \frac{d^2}{d\lambda^2} F_0 = 0, \quad
\frac{d}{d\lambda} F_2
+ \frac{1}{2!} \frac{d^2}{d\lambda^2} F_1
+ \frac{1}{3!} \frac{d^3}{d\lambda^3} F_0 = 0, 
\dots
\]
and so on. 
Hence, by induction, we can show $\partial_{\lambda} F_n = 0$
for $n \geq 0$.
\end{proof}

\begin{prop} \label{prop:almost-uniqueness-2}
A formal power series solution
$F(\lambda) = \sum_{n \geq 0} F_n(\lambda) h^{n}$
of a difference equation
\begin{equation}
F(\lambda + \hbar) - 2 F(\lambda) + F(\lambda - \hbar)= 0
\end{equation}
should satisfy $\partial_{\lambda}^2F_n = 0$ $(n \geq 0)$.
\end{prop}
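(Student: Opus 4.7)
The plan is to mimic the proof of Proposition \ref{prop:almost-uniqueness-1} but for the second-order symmetric difference operator. First I would substitute the Taylor expansions
\begin{equation*}
F_n(\lambda \pm \hbar) = \sum_{k = 0}^{\infty} \frac{(\pm\hbar)^k}{k!} \frac{d^k}{d\lambda^k} F_n(\lambda)
\end{equation*}
into $F(\lambda + \hbar) - 2F(\lambda) + F(\lambda - \hbar)$. The odd-order derivatives cancel between the two shifts, and one finds
\begin{equation*}
F(\lambda + \hbar) - 2F(\lambda) + F(\lambda - \hbar) = 2 \sum_{m \ge 2} \hbar^{m} \sum_{\substack{n + 2j = m \\ j \ge 1}} \frac{1}{(2j)!} \frac{d^{2j}}{d\lambda^{2j}} F_{n}(\lambda),
\end{equation*}
so the vanishing of the coefficient of $\hbar^{m}$ gives a triangular system in $\{F_n\}$.

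Next I would extract this system order by order. The coefficient of $\hbar^{2}$ yields $\tfrac{1}{2!} F_0'' = 0$; the coefficient of $\hbar^{3}$ yields $\tfrac{1}{2!} F_1'' = 0$. For $m \ge 4$ the equation reads
\begin{equation*}
\frac{1}{2!} F_{m-2}''(\lambda) + \sum_{j = 2}^{\lfloor m/2 \rfloor} \frac{1}{(2j)!} F_{m-2j}^{(2j)}(\lambda) = 0.
\end{equation*}
Proceeding by induction on $m$, one assumes $F_n'' = 0$ for all $n < m-2$; then every derivative $F_{m-2j}^{(2j)}$ with $j \ge 2$ vanishes (since it equals $(d/d\lambda)^{2j-2}$ of a vanishing quantity), and the remaining term forces $F_{m-2}'' = 0$.

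The proof has no real obstacle; it is a direct adaptation of Proposition \ref{prop:almost-uniqueness-1}, with the only mild point being the bookkeeping of the Taylor expansion, where the cancellation of odd powers is what replaces the role of the first-order difference there. Once the triangular structure is observed, the inductive step is immediate, so the entire argument reduces to carefully writing the $\hbar$-expansion and invoking induction.
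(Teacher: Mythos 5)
Your proof is correct and follows exactly the route the paper intends: the paper states only that Proposition \ref{prop:almost-uniqueness-2} ``can be proved in the same way as Proposition \ref{prop:almost-uniqueness-1}'' and omits the details, and your argument is precisely that adaptation (Taylor-expand the shifts, note the cancellation of odd derivatives, and run the triangular induction on the coefficient of $\hbar^m$). The bookkeeping and the inductive step are both accurate, so nothing is missing.
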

This can be proved in the same way as 
Proposition \ref{prop:almost-uniqueness-1}.
We omit the details.

%%%%%%%%%%%%%%%%%%%%%%%%%%%%%%%%%%%%%%%%%%%%%%%%%%%%%%%%%%%%%%%%%%%%%%%
%%%%%%%%%%%%%%%%%%%%%%%%%%%%%%%%%%%%%%%%%%%%%%%%%%%%%%%%%%%%%%%%%%%%%%%
%%%%%%%%%%%%%%%%%%%%%%%%%%%%%%%%%%%%%%%%%%%%%%%%%%%%%%%%%%%%%%%%%%%%%%%
%%%%%%%%%%%%%%%%%%%%%%%%%%%%%%%%%%%%%%%%%%%%%%%%%%%%%%%%%%%%%%%%%%%%%%%
%%%%%%%%%%%%%%%%%%%%%%%%%%%%%%%%%%%%%%%%%%%%%%%%%%%%%%%%%%%%%%%%%%%%%%%

\end{document}